\documentclass{article}

\usepackage{amsmath,amssymb,amsthm,amsrefs}
\usepackage{enumerate}
\usepackage{pifont}
\usepackage{setspace} 
\usepackage[utf8]{inputenc}

\newcommand{\Z}{\mathbb{Z}}
\newcommand{\D}{\mathbb{D}}
 
\newcommand{\N}{\mathbb{N}} 
\newcommand{\R}{\mathbb{R}} 
\newcommand{\T}{\mathbb{T}} 
\newcommand{\C}{\mathbb{C}}


\newcommand{\BC}{\mathcal{B}}
\newcommand{\CC}{\mathcal{C}}

\newcommand{\HC}{\mathcal{H}}

\newcommand{\LC}{\mathcal{L}}

\newcommand{\NC}{\mathcal{N}}
\newcommand{\OC}{\mathcal{O}}
\newcommand{\PC}{\mathcal{P}}

\newcommand{\SC}{\mathcal{S}}

\newcommand{\WC}{\mathcal{W}}
\newcommand{\XC}{\mathcal{X}}
\newcommand{\YC}{\mathcal{Y}}


\newcommand{\Hol}{\textnormal{Hol}}
\newcommand{\tr}{\textnormal{tr}}
\newcommand{\diag}{\textnormal{diag}}
\newcommand{\overbar}[1]{\mkern 1.0mu\overline{\mkern-1.0mu#1\mkern-1.0mu}\mkern 1.0mu}
\newcommand*\conj[1]{\overbar{#1}}
\newcommand*\conjvar[1]{\tilde{#1}}

\newtheorem{theorem}{Theorem}[section]
\newtheorem{lemma}[theorem]{Lemma}
\newtheorem{definition}[theorem]{Definition}
\newtheorem{proposition}[theorem]{Proposition}
\newtheorem{corollary}[theorem]{Corollary}

\newtheorem{remark}[theorem]{Remark}

\title{Vectorial Hankel operators, Carleson embeddings, and notions of $\textrm{BMOA}$}
\author{Eskil Rydhe}
\date{\today}

\usepackage[textwidth=136mm]{geometry}


\begin{document}
	\maketitle
	
	\begin{abstract}
		Let $\textrm{BMOA}_{\NC\PC}\left(\LC\right)$ denote the space of $\LC$-valued analytic functions $\phi$ for which the Hankel operator $\Gamma_\phi$ is $H^2\left(\HC\right)$-bounded. Obtaining concrete characterizations of $\textrm{BMOA}_{\NC\PC}\left(\LC\right)$ has proven to be notoriously hard. Let $D^\alpha$ denote fractional differentiation. Motivated originally by control theory, we characterize $H^2\left(\HC\right)$-boundedness of $D^\alpha \Gamma_\phi$, where $\alpha >0$, in terms of a natural anti-analytic Carleson embedding condition. We obtain three notable corollaries: The first is that $\textrm{BMOA}_{\NC\PC}\left(\LC\right)$ is \emph{not} characterized by said embedding condition. The second is that when we add an adjoint embedding condition, we obtain a sufficient but not necessary condition for boundedness of $\Gamma_\phi$. The third is that there exists a bounded analytic function for which the associated anti-analytic Carleson embedding is unbounded. As a consequence, boundedness of an analytic Carleson embedding does not imply that the anti-analytic ditto is bounded. This answers a question by Nazarov, Pisier, Treil, and Volberg.
	\end{abstract}
	
	\section{Introduction}
	Throughout this paper we let $\HC$ denote a separable Hilbert space with inner product $\left\langle\cdot,\cdot\right\rangle_\HC$. Unless we explicitly state otherwise, we assume that $\HC$ is infinite-dimensional. We denote by $\LC=\LC\left(\HC\right)$ the space of bounded linear transformations on $\HC$, by $\SC^1$ the corresponding trace class, and by $\SC^2$ the Hilbert--Schmidt class. $\XC$ will be used as a generic notation for an element of the set $\left\{\HC,\LC,\SC^1,\SC^2\right\}$. 
	
	We will use $\YC$ to denote a general Banach space. By $\Hol\left(\YC\right)$ we denote the space of $\YC$-valued analytic functions on the open unit disc $\D$. For $f\in\Hol(\YC)$, we denote the $n$th Taylor coefficients at the origin by $\hat f(n)$. We denote by $\OC\left(\YC\right)$ the space of functions in $\Hol\left(\YC\right)$ that admit an analytic extension to a larger disc (centered at the origin).  If $\YC=\C$, then we suppress this in our notation, i.e. $\Hol=\Hol\left(\C\right)$, and $\OC=\OC\left(\C\right)$. The same principle will apply to all function spaces discussed below. 
	
	For $p\in\left[1,\infty\right]$ and $\XC\in\left\{\HC,\SC^1\right\}$, we let $L^p\left(\T,\XC\right)$ denote the standard space of $p$-Bochner--Lebesgue integrable functions from $\T$ to $\XC$. Here $\T$ denotes the unit circle in $\C$. Similarly, we define $L^p\left(\T,\LC\right)$ as the natural WOT-analogue of $L^p(\T)$: A function $f:\T\to\LC$ belongs to $L^p\left(\T,\LC\right)$ if and only if for all $x,y\in\HC$ the function $\left\langle f\left(\cdot\right) x,y\right\rangle_\HC$ is measurable and, moreover, $\left\|f\right\|_{L^p\left(\T,\LC\right)}^p = \int_{\T}\left\|f\right\|_\LC^p\, dm<\infty$. Here $m$ denotes normalized Lebesgue measure on $\T$.
	
	The Hardy space $H^p\left(\XC\right)$ is the space of $f\in\Hol\left(\XC\right)$ such that
	\begin{equation}\label{Eq:Hp-norm}
	\left\|f\right\|_{H^p\left(\XC\right)}^p=\sup_{0<r<1}\left\|f_r\right\|_{L^p\left(\T,\XC\right)}<\infty,
	\end{equation}
	where we have defined the function $f_r:z\mapsto f\left(rz\right)$. An important property of Hardy space functions is that they have boundary values in a natural sense, cf. Proposition \ref{Proposition:BoundaryIdentification}. We denote the boundary values of $f\in H^p\left(\XC\right)$ by $bf\in L^p(\T,\XC)$. 
	
	The space $H^2\left(\HC\right)$ is a Hilbert space, with inner product $\left\langle f,g\right\rangle = \sum_{0}^\infty \langle \hat f(n),\hat g(n)\rangle_\HC$. Of particular importance will be the set of $H^2\left(\HC\right)$-normalized functions in $\OC\left(\HC\right)$, which we denote by $\OC_1\left(\HC\right)$. 
		
	We now introduce the main topics of this paper. Initially, we consider the scalar setting, rather than the proper vectorial one.
	
	\subsection{Hankel operators}
	Given $\phi\in\Hol$ and $f\in\OC$, we define the action of the Hankel operator $\Gamma_\phi$ on $f$ by
	\begin{equation}\label{Eq:HardyHankelFormula}
	\Gamma_\phi f\left(z\right)=\sum_{n=0}^\infty  \left(\sum_{m=0}^\infty\hat \phi\left(m+n\right)\hat f\left(m\right)  \right)z^n,\quad z\in\D. 
	\end{equation}
	A standard reference on Hankel operators is \cite{Peller2003:HankOpsBook}. We refer to $\phi$ as the symbol of $\Gamma_\phi$. We say that $\Gamma_\phi$ is bounded if it extends to a bounded operator on $H^2$. 
	
	For $\Gamma_\phi$ to be bounded it is necessary for $\phi$ to be in $H^2$. For $\phi\in H^2$, one shows by computation that $\Gamma_\phi f=P_+  \left(\phi  \conjvar{f}  \right)$, where $P_+$ denotes the orthogonal projection from $L^2\left(\T\right)$ onto $H^2$, and $\conjvar{ f}:z\mapsto f\left(\conj{ z}\right)$. 
	
	It is convenient to define the operation of coefficient conjugation, $f\mapsto f^\#$, $f^\# (z)=\conj{f(\conj z)}$. Note that this is an isomorphism on $H^2$. A classical result is that $H^1=H^2\cdot H^2$: If $f,g\in H^2$, then $f\cdot g\in H^1$, and $\left\|h\right\|_{H^1}\le \left\|f\right\|_{H^2}\left\|g\right\|_{H^2}$. Conversely, if $h\in H^1$, then there exists $f,g\in H^2$ such that $h=f\cdot g$ and $\left\|f\right\|_{H^2}\left\|g\right\|_{H^2}\le C\left\|h\right\|_{H^1}$, where $C>0$ is a constant independent of $f$ and $g$. Now choose $f$ so that $f^\# g = h$. By the calculation
	\[
	\left\langle\Gamma_\phi f,g\right\rangle = \left\langle P_+ \left(\phi\conjvar{f} \right),g\right\rangle= \left\langle  \phi\conjvar{f},g\right\rangle= \left\langle\phi,f^\#g\right\rangle = \left\langle\phi,h\right\rangle,
	\]
	one obtains that $\Gamma_\phi$ is bounded if and only if $\phi\in\left(H^1\right)^*$. 
	
	Since $H^1$ may be identified with a subspace of $L^1(\T)$, and $\left(L^1\left(\T\right)\right)^*=L^\infty\left(\T\right)$, a straightforward application of the Hahn--Banach theorem shows that $\left(H^1\right)^*=P_+L^\infty\left(\T\right)$. The fact that $\Gamma_\phi$ is bounded if and only if $\phi\in P_+L^\infty\left(\T\right)$ is known as Nehari's theorem \cite{Nehari1957:BddBilinFrms}.
	
	\subsection{Carleson embeddings}
	Every Borel measure $\mu\ge 0$ on $\D$ corresponds to a so-called Carleson embedding $H^2\hookrightarrow L^2(\D,d\mu)$. It is a classical result \citelist{\cite{Carleson1958:InterpolProblBddAnalFcns}\cite{Carleson1962:InterpolBddAnalFcnsCoronaProbl}} in complex and harmonic analysis that boundedness of such embeddings can be characterized by a simple geometric property of $\mu$. Specifically, the Carleson embedding condition
	\begin{equation}
	\sup_{f\in\OC_1\left(\HC\right)} \int_\D   \left|f\left(z\right) \right|^2  \, d\mu\left(z\right)<\infty
	\end{equation}
	holds if and only if $\mu$ satisfies the so-called Carleson intensity condition
	\begin{equation}\label{Eq:CarlesonInt}
	\sup_{\substack{I\subset\T\\ I\textnormal{ arc}}}\frac{\mu\left(\left\{w\in\D;\, 1-m(I)<\left|w\right|<1,\, \frac{w}{\left|w\right|}\in I\right\}\right)}{m\left(I\right)}<\infty.
	\end{equation}
	
	\subsection{Bounded mean oscillation}
	A bridge connecting Hankel operators, and Carleson embeddings is given by $\textrm{BMOA}$; bounded mean oscillation of analytic functions. Suppose that $\phi\in H^1$. We then say $\phi$ belongs to the class $\textrm{BMOA}$ if and only if
	\[
	\left\|\phi\right\|_*=\sup_{\substack{I\subset\T\\ I\textnormal{ arc}}}\frac{1}{m\left(I\right)}\int_I   \left|b\phi-\left(b\phi\right)_I \right|\, dm<\infty.
	\]
	Here $\left(b\phi\right)_I$ denotes the Lebesgue integral average $\frac{1}{m\left(I\right)}\int_I b\phi\, dm$. The quantity $\left\|\cdot\right\|_*$ is a semi-norm. The class $\textrm{BMOA}$ becomes a Banach space when equipped with the norm $\left\|\phi\right\|_{\textrm{BMOA}}=\left|\phi\left(0\right)\right|+\left\|\phi\right\|_*$.
	
	A celebrated result by Fefferman \citelist{\cite{Fefferman1971:CharBMO}\cite{Fefferman-Stein1972:HpSpaces}} is that $\textrm{BMOA}$ is in fact the dual of $H^1$. Moreover, $\phi\in \textrm{BMOA}$ if and only if the measure $\mu$ given by $d\mu = \left|\phi'(z)\right|^2\left(1-\left|z\right|^2\right)\, dA(z)$ satisfies \eqref{Eq:CarlesonInt}. As a summary of this discussion we have:
	\begin{proposition}
		Let $\phi\in H^1$. Then the following are equivalent:
		\begin{itemize}
			\item[$(i)$] $\Gamma_\phi$ is $H^2$-bounded.
			\item[$(ii)$] $\phi\in\left(H^1\right)^*$.
			\item[$(iii)$] $\phi\in P_+L^\infty\left(\T\right)$.
			\item[$(iv)$] $\phi\in \textrm{BMOA}$.
			\item[$(v)$] The measure given by $d\mu = \left|\phi'(z)\right|^2\left(1-\left|z\right|^2\right)\, dA(z)$ has finite Carleson intensity.
			\item[$(vi)$] The Carleson embedding $H^2\hookrightarrow L^2\left(\D,\left|\phi'(z)\right|^2\left(1-\left|z\right|^2\right)\, dA\left(z\right)\right)$ is bounded.
		\end{itemize}
	\end{proposition}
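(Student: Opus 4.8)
The statement is a synthesis in which each equivalence has essentially been prepared in the preceding discussion, so the plan is to assemble the links into a single cycle rather than to prove anything from scratch. I would split the six conditions as
\[
(i)\Leftrightarrow(ii)\Leftrightarrow(iii)\quad\text{and}\quad(ii)\Leftrightarrow(iv)\Leftrightarrow(v)\Leftrightarrow(vi),
\]
treating the deep cited theorems (Nehari, Fefferman, Carleson) as black boxes that the excerpt permits me to use. Throughout I would record that $\phi\in H^2$ is a standing consequence of any of the conditions, which justifies the formula $\Gamma_\phi f=P_+(\phi\conjvar f)$ underlying the whole argument.

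For the first chain, the link $(i)\Leftrightarrow(ii)$ rests on the displayed computation $\langle\Gamma_\phi f,g\rangle=\langle\phi,f^\#g\rangle$ together with the factorization $H^1=H^2\cdot H^2$. I would argue that a bounded bilinear form $(f,g)\mapsto\langle\Gamma_\phi f,g\rangle$ on $H^2\times H^2$ corresponds, via this factorization with comparable norms, to a bounded functional $h\mapsto\langle\phi,h\rangle$ on $H^1$; conversely, choosing $f$ so that $f^\#g=h$ (legitimate since coefficient conjugation is an isomorphism on $H^2$) recovers the form from the functional. The link $(ii)\Leftrightarrow(iii)$ is Nehari's theorem: using $H^1\subset L^1(\T)$ and $(L^1(\T))^*=L^\infty(\T)$, Hahn--Banach extends any $\Lambda\in(H^1)^*$ to integration against some $\psi\in L^\infty(\T)$, and then $\phi=P_+\psi$ represents $\Lambda$, which is exactly the assertion $(H^1)^*=P_+L^\infty(\T)$.

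For the second chain, $(ii)\Leftrightarrow(iv)$ is precisely Fefferman's duality theorem identifying $(H^1)^*$ with $\textrm{BMOA}$; $(iv)\Leftrightarrow(v)$ is the characterization stated above, that $\phi\in\textrm{BMOA}$ iff $d\mu=|\phi'(z)|^2(1-|z|^2)\,dA(z)$ satisfies \eqref{Eq:CarlesonInt}; and $(v)\Leftrightarrow(vi)$ is the Carleson embedding theorem relating the intensity condition \eqref{Eq:CarlesonInt} to boundedness of $H^2\hookrightarrow L^2(\D,d\mu)$. The main obstacle is not in the bookkeeping but lies entirely inside these cited results—Fefferman's $H^1$--$\textrm{BMOA}$ duality being the deepest—so the only genuine care I would exercise is tracking the norm comparabilities: ensuring the factorization inequality $\|f\|_{H^2}\|g\|_{H^2}\le C\|h\|_{H^1}$ is applied in the correct direction in $(i)\Leftrightarrow(ii)$, and that all the identifications preserve the relevant (semi-)norms so that the cycle yields genuine equivalence rather than mere one-directional implication.
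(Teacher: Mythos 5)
Your proposal is correct and follows essentially the same route as the paper, which presents this proposition as a summary of the preceding discussion: the bilinear-form computation $\left\langle\Gamma_\phi f,g\right\rangle=\left\langle\phi,f^\#g\right\rangle$ with the factorization $H^1=H^2\cdot H^2$ for $(i)\Leftrightarrow(ii)$, Hahn--Banach with $\left(L^1\left(\T\right)\right)^*=L^\infty\left(\T\right)$ for $(ii)\Leftrightarrow(iii)$ (Nehari), and the cited Fefferman duality and Carleson measure/embedding theorems for the remaining links. Your added bookkeeping (recording $\phi\in H^2$ as a standing consequence so that $\Gamma_\phi f=P_+\left(\phi\conjvar{f}\right)$ is legitimate, and tracking norm comparabilities) is consistent with, and slightly more explicit than, the paper's treatment.
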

	
	\subsection{The vectorial setting}
	Note that \eqref{Eq:HardyHankelFormula} makes perfect sense if $\phi\in\Hol(\LC)$ and $f\in\OC(\HC)$. We take this as the definition of a vectorial Hankel operator $\Gamma_\phi$. The factorization result $H^1\left(\SC^1\right)=H^2\left(\SC^2\right)\cdot H^2\left(\SC^2\right)$, due to Sarason \cite{Sarason1967:GenInterpol}, implies that $\Gamma_\phi$ is $H^2(\HC)$-bounded if and only if $\phi\in \left(H^1\left(\SC^1\right)\right)^*$, very much like in the scalar setting. 
	
	Since $\left(L^1\left(\T,\SC^1\right)\right)^*$ is not equal to $L^\infty(\T,\LC)$ ($\LC$ does not have the so-called Radon--Nikodym property, e.g. \cite{Diestel-Uhl1977:VecMeasures}), it is not obvious that $\left(L^1\left(\T,\SC^1\right)\right)^*=P_+L^\infty\left(\T,\LC\right)$. However, this follows from a vectorial extension of Nehari's theorem, due to Page \cite{Page1970:BddCompctVecHankOps}: $\Gamma_\phi$ is $H^2\left(\HC\right)$-bounded if and only if $\phi\in P_+L^\infty\left(\T,\LC\right)$.
		
	The space of $\LC$-valued analytic functions for which the corresponding Hankel operators are $H^2\left(\HC\right)$-bounded is commonly referred to as \emph{Nehari--Page} $\textrm{BMOA}$: 
	\begin{definition}\label{def:NP}
		Let $\phi\in\Hol\left(\LC\right)$. We then say that $\phi\in \textrm{BMOA}_{\NC\PC}\left(\LC\right)$ if and only if
		\[
		\left\|\phi\right\|_{\textrm{BMOA}_{\NC\PC}}=\sup_{f\in\OC_1\left(\HC\right)}\left\|\Gamma_\phi f\right\|_{H^2\left(\HC\right)}<\infty.
		\]
	\end{definition}
		
	While $\textrm{BMOA}_{\NC\PC}\left(\LC\right)$ can be identified either with $P_+L^\infty\left(\T,\LC\right)$ or with $\left(H^1\left(\SC^1\right)\right)^*$, these characterizations are of an abstract nature. Finding concrete conditions that characterize $\textrm{BMOA}_{\NC\PC}\left(\LC\right)$ has proven to be notoriously difficult. For example, define the class $\textrm{BMOA}_\OC\left(\LC\right)$ as the class of $\phi\in H^1\left(\LC\right)$ such that the oscillation condition
	\[
	\left\|\phi\right\|_*=\sup_{\substack{I\subset\T\\ I\textnormal{ arc}}}\frac{1}{m\left(I\right)}\int_I\left\|b\phi-\left(b\phi\right)_I\right\|_\XC\, dm<\infty
	\]
	holds. Then
	\[
	\textrm{BMOA}_\OC\left(\LC\right)\subsetneq \textrm{BMOA}_{\NC\PC}\left(\LC\right).
	\]
	This fact represents an area of research, where authors consider some aspect of the theory for scalar-valued $\textrm{BMOA}$ (or its harmonic or dyadic analogues), and then discuss to what extent this aspect carries over to the vector-valued case, e.g. \citelist{\cite{Blasco1988:HardySpacesVecValDuality}\cite{Blasco-Pott2008:EmbOpValDyadicBMO}\cite{Bourgain1986:VecValSingIntsHardy-BMODualityChapter}\cite{Gillespie-Pott-Treil-Volberg2004:LogGrowthHilbTransfVecHank}\cite{Mei2006:MatValParaprods}\cite{Nazarov-Pisier-Treil-Volberg2002:EstsVecCarlesonEmbThmVecParaprods}\cite{Nazarov-Treil-Volberg1997:CounterExInfDimCarlesonEmbThm}}.
	
	Before we get to the meat of this paper, we define the differentiation operator $D:\Hol\left(\YC\right)\to\Hol\left(\YC\right)$ by $Df\left(z\right)=zf'\left(z\right)+f\left(z\right)$. With respect to the monomial basis, $D$ acts like a diagonal matrix. This presents an elementary way of taking arbitrary powers of $D$: For $\alpha\in\R$, we set
	\[
	D^\alpha f\left(z\right)=\sum_{n=0}^\infty \left(1+n\right)^\alpha \hat f\left(n\right) z^n,\quad z\in\D.
	\]
	Another convenience of working with $D$ in place of ordinary differentiation is that it does not annihilate constants. In fact we can say more: For each $\alpha\in\R$, $D^\alpha:\Hol\left(\YC\right)\to\Hol\left(\YC\right)$ is a bijection that leaves $\OC\left(\YC\right)$ invariant. 
	
	From a technical point of view, the present paper is mainly concerned with $H^2(\HC)$-boundedness of operators of the type $D^\alpha\Gamma_\phi$, with $\alpha>0$ and $\phi\in\Hol\left(\LC\right)$. The present paper was originally motivated by the natural appearance of such operators in control theory, e.g. \cite{Jacob-Rydhe-Wynn2014:WeightWeissConjRKTGenHankOps}. However, they also have implications to our understanding of $\textrm{BMOA}_{\NC\PC}\left(\LC\right)$. Our investigation motivates the definition of a class which we refer to as \emph{Carleson} $\textrm{BMOA}$:
	\begin{definition}\label{def:C}
		Let $\phi\in\Hol\left(\LC\right)$. We then say that $\phi\in \textrm{BMOA}_{\CC}\left(\LC\right)$ if and only if
	\begin{equation}\label{Eq:AAC}
		\left\|\phi\right\|_{\textrm{BMOA}_\CC}^2=\sup_{f\in\OC_1\left(\HC\right)} \int_\D \left\| \left(D\phi\right)\left(z\right) f\left(\conj{ z}\right)\right\|_\HC^2\left(1-\left|z\right|^2\right)\, dA\left(z\right)<\infty.
	\end{equation}
	\end{definition}
	Since $D$ does not annihilate constants, $\left\|\cdot\right\|_{\textrm{BMOA}_\CC}$ is a proper norm, and not a semi-norm. 
	
	\subsection{Main result and corollaries}
	\begin{theorem}\label{Theorem:HankelCarleson}
		Let $\HC$ be a separable Hilbert space, $\LC$ its space of bounded linear transformations. Let $\alpha>0$ and suppose that $\phi:\D\to\LC$ is analytic. Then $D^\alpha \Gamma_\phi$ is $H^2\left(\HC\right)$-bounded if and only if $D^\alpha \phi\in \textrm{BMOA}_{\CC}\left(\LC\right)$, i.e. 
		\[
		\left\|D^\alpha\Gamma_\phi\right\|_{H^2\left(\HC\right)\to\HC^2\left(\HC\right)}=\sup_{f\in\OC_1\left(\HC\right)}\left\|D^\alpha\Gamma_\phi f\right\|_{H^2\left(\HC\right)}<\infty
		\]
		if and only if
		\[
		\left\|D^\alpha\phi\right\|_{\textrm{BMOA}_\CC}=\sup_{f\in\OC_1\left(\HC\right)}\int_\D \left\| \left(D^{1+\alpha}\phi\right)\left(z\right) f\left(\conj{ z}\right)\right\|_\HC^2\left(1-\left|z\right|^2\right)dA\left(z\right)<\infty.
		\]
		Moreover,
		\[
		\left\|D^\alpha\Gamma_\phi\right\|_{H^2\left(\HC\right)\to\HC^2\left(\HC\right)}\approx \left\|D^\alpha\phi\right\|_{\textrm{BMOA}_\CC}.
		\]
	\end{theorem}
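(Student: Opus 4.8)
The plan is to fix $f\in\OC_1(\HC)$ and to compare the two positive quadratic forms
\[
A(f)=\left\|D^\alpha\Gamma_\phi f\right\|_{H^2(\HC)}^2,\qquad B(f)=\int_\D\left\|\left(D^{1+\alpha}\phi\right)(z)\,f(\conj{z})\right\|_\HC^2\left(1-|z|^2\right)dA(z),
\]
since proving $\left\|D^\alpha\Gamma_\phi\right\|\approx\left\|D^\alpha\phi\right\|_{\textrm{BMOA}_\CC}$ amounts to showing $\sup_f A(f)\approx\sup_f B(f)$. Expanding in the monomial basis and using orthogonality in $H^2(\HC)$ gives the clean coefficient form $A(f)=\sum_{n\ge0}(1+n)^{2\alpha}\bigl\|c_n(f)\bigr\|_\HC^2$, where $c_n(f)=\sum_{m\ge0}\hat\phi(m+n)\hat f(m)$; here the \emph{output} frequency $n$ carries the weight $(1+n)^{2\alpha}$.

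For the Carleson form I would write out $\left(D^{1+\alpha}\phi\right)(z)f(\conj{z})=\sum_{k,m\ge0}(1+k)^{1+\alpha}\hat\phi(k)\hat f(m)\,z^k\conj{z}^{\,m}$ and integrate. Since $\left(1-|z|^2\right)dA$ is rotation invariant, all cross terms vanish except those of equal frequency $k-m=k'-m'$; writing $k=m+n$ and evaluating the elementary radial integral $\int_0^1 t^{\,n+m+m'}(1-t)\,dt=\bigl((n+m+m'+1)(n+m+m'+2)\bigr)^{-1}$ reorganizes $B(f)$ by frequency $n$. The decisive observation is that each frequency block reassembles into a \emph{dilated} Hankel form: with the dilation notation $g_{\sqrt t}:z\mapsto g(\sqrt t\,z)$ already in use, one checks that the $n$-th Taylor coefficient of $\Gamma_{(D^{1+\alpha}\phi)_{\sqrt t}}\,f_{\sqrt t}$ is exactly $t^{n/2}$ times the inner radial polynomial, so that
\[
B(f)=\int_0^1\left(1-t\right)\left\|\Gamma_{(D^{1+\alpha}\phi)_{\sqrt t}}\,f_{\sqrt t}\right\|_{H^2(\HC)}^2 dt\;+\;(\text{the symmetric co-analytic term from }k<m).
\]
Thus $B(f)$ is an average over dilations of Hankel and adjoint-Hankel forms whose symbol is $D^{1+\alpha}\phi$, and the outer weight $\left(1-t\right)dt$ together with the factors $t^{k/2}$ acts as a fractional integration meant to trade the \emph{symbol} weight $(1+k)^{1+\alpha}$ for the output weight $(1+n)^{\alpha}$ appearing in $A(f)$.

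It then remains to prove $\sup_f A(f)\approx\sup_f B(f)$. These suprema are the norms of the positive operators $T_A,T_B$ on $H^2(\HC)$ whose $\LC$-valued matrix entries are $(T_A)_{m',m}=\sum_{n\ge0}(1+n)^{2\alpha}\hat\phi(m'+n)^*\hat\phi(m+n)$ and the analogous frequency-weighted kernel for $T_B$ read off from the expansion above. I would establish $\|T_A\|\approx\|T_B\|$ by dominating one positive form by the other, the natural device being a Schur test with weights $(1+m)^{-\theta}$; convergence of the resulting kernel sums confines $\theta$ to a range that is available precisely because $\alpha>0$. Equivalently, in the dilation representation one bounds the dilated Hankel norms against $\left\|D^\alpha\Gamma_\phi\right\|_{H^2(\HC)\to H^2(\HC)}$ and integrates in $t$, the positive power $\alpha$ being exactly what lets the $t$-integration reproduce the weight $(1+n)^{2\alpha}$ out of the symbol weight $(1+k)^{2+2\alpha}$ on the diagonal.

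The crux — and where I expect nearly all the work to lie — is that this trade is irreducibly \emph{global}: the two forms are not comparable pointwise in $f$. For $\phi(z)=z^k$ and $f(z)=z^k$ one computes $A(f)=1$ while $B(f)\approx k^{2\alpha}$, so the equivalence can hold only after the supremum, and it must use $\alpha>0$ essentially — for $\alpha=0$ it fails, which is the content of the first corollary. Two further difficulties are specific to this setting. First, the operator-valued off-diagonal entries $\hat\phi(m'+n)^*\hat\phi(m+n)$ carry no sign, so a naive entrywise domination of kernels is illegitimate and the comparison must be carried out on the \emph{resummed} quadratic forms, e.g.\ through the dilation representation in which each summand is a manifestly nonnegative Hankel norm. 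Second, all constants must be kept uniform in the dilation parameter $t$ so that the outer integral converges; controlling this uniformity, against the competition between the growth $(1+k)^{1+\alpha}$ and the decay $t^{k/2}$, is the delicate estimate on which the whole equivalence rests.
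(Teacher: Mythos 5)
Your setup is correct as far as it goes: the coefficient form of $A(f)$, the rotation-invariance reorganization of $B(f)$ into frequency blocks (indeed $\Gamma_{(D^{1+\alpha}\phi)_{\sqrt t}}\,f_{\sqrt t}=\bigl(\Gamma_{D^{1+\alpha}\phi}\,f_t\bigr)_{\sqrt t}$, so your dilation identity holds), and the example $\phi=f=z^k$ with $A(f)=1$, $B(f)\approx k^{2\alpha}$ showing the comparison cannot be pointwise in $f$. But the device you propose for the actual comparison is precisely the one this paper must avoid. Bounding the dilated Hankel norms against $\left\|D^\alpha\Gamma_\phi\right\|$, for each fixed $t$, is an operator-norm comparison between matrices with entries $(1+m+n)^{1+\alpha}t^{(2m+n)/2}\hat\phi(m+n)$ and $(1+n)^\alpha\hat\phi(m+n)$, i.e.\ a Schur-multiplier statement; the same is true of your Schur test with weights $(1+m)^{-\theta}$ on the kernels of $T_A,T_B$. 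In the $\LC$-valued setting such multiplier bounds are exactly what fail --- this is the content of Propositions \ref{Proposition:Davidson-Paulsen1} and \ref{Proposition:Davidson-Paulsen2} and the reason the paper explicitly cannot follow the Janson--Peetre scalar proof. Moreover, even granting scalar-style Schur bounds, the constants behave like $C(t)\gtrsim(1-t)^{-1}$ in the regime $m\sim n\sim(1-t)^{-1}$, so $\int_0^1(1-t)\,C(t)^2\,dt$ diverges: a termwise-in-$t$ domination cannot close, and nothing in your sketch supplies the global mechanism. The paper's substitute is genuinely different: an auxiliary parameter $\beta>\max\left\{2,1+\alpha\right\}$, the identity rewriting $\left\|D^\alpha\Gamma_\phi f\right\|_{H^2\left(\HC\right)}$ as a \emph{projected} logarithmic Bergman norm, and a binomial expansion of the difference of two Bergman projections into the operator family $D^\alpha\Gamma_{D^{-\alpha-l}\psi}D^l$, controlled \emph{linearly} in $l$ by the Bloch norm of $\psi=D^\alpha\phi$ (Lemmas \ref{Lemma:HankelCarlesonBloch} and \ref{Lemma:OrderControl}, the latter resting on the sharp Lemma \ref{Lemma:PrimitiveNorm} and a low/high frequency splitting). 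The $O(l)$ bound --- versus the $O(2^l)$ that crude estimates give, cf.\ the remark after Lemma \ref{Lemma:PrimitiveNorm} --- is what makes the binomial series with coefficients of size $l^{-\left(1+\beta-\alpha\right)}$ summable; your plan contains no estimate playing this role.

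The second gap is your parenthetical ``the symmetric co-analytic term from $k<m$''. In the operator-valued setting that term is \emph{not} symmetric to the analytic block: the asymmetry between $\phi$ and $\phi^\#$ is the central theme of the paper (Corollary \ref{Corollary:CNP}), and by Proposition \ref{Proposition:Davidson-Paulsen1} there are rank-one-valued symbols for which the analytic object is bounded while the adjoint-type one is not, so re-running your analytic-block argument with the roles of $k$ and $m$ reversed is not available. Dominating the full integral $B(f)$, co-analytic output included, by its analytic (projected) part is exactly the step $\left\|\phi\right\|_{6,\alpha}\lesssim\left\|\phi\right\|_{5,\alpha}$ in the paper, and it is the one place where a substantial external result is imported: the Aleman--Perfekt inequality (Lemma \ref{Lemma:AlemanPerfekt}), applied after lifting from $\HC$-valued to $\SC^2$-valued test functions. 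So your outline correctly locates where the difficulty lives and correctly diagnoses that the equivalence is irreducibly global, but both proposed resolutions --- Schur-type domination with integrable constants in $t$, and symmetry for the anti-analytic part --- are respectively unavailable and false here; what is missing is precisely the pair of ideas (the linear-in-$l$ order-control lemma and the Aleman--Perfekt embedding-versus-Hankel-form inequality) on which the paper's proof rests.
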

	
	Theorem \ref{Theorem:HankelCarleson} generalizes a result by Janson and Peetre \cite{Janson-Peetre1988:Paracomms} who obtained essentially the above characterization in the case where $\HC=\C$. We point out that, in the case where $\phi$ is $\LC$-valued, we are forced to avoid the Schur multiplier techniques used in \cite{Janson-Peetre1988:Paracomms}. This is made evident by the discussion in \cite{Davidson-Paulsen1997:PolBddOps}*{Section 4}.
	
	Operators of the type $D\Gamma_\phi$ received a lot of attention in connection to the so called Halmos problem \cite{Halmos1970:TenProbls}*{Problem 6}: 
	\begin{quote}
		If a Hilbert space operator is similar to a Hilbert space contraction, then it is also polynomially bounded (by von Neumann's inequality). Is the converse true?
	\end{quote} Following the works of many authors \citelist{\cite{Aleksandrov-Peller1996:HankOpsSimToContr}\cite{Bourgain1986:SimProblPolBddOpsHSpace}\cite{Foguel1964:CounterExSz.-NagyProbl}\cite{Paulsen1984:ComplPolBddSimContr}\cite{Peller1982:EstsFcnsPwrBddOpsHSpace}\cite{Sz.-Nagy1959:ComplContOpsUniformlyBddIterates}}, Pisier \cite{Pisier1997:PolBddNotSim} answered this question in the negative. Subsequently, different proofs of the same result have been given in several papers \citelist{\cite{Davidson-Paulsen1997:PolBddOps}\cite{Kislyakov2000:OpsDisSimContr}}. All of these proofs exploit boundedness properties of operators of the type $D\Gamma_\phi$. The following two propositions are essentially from Davidson and Paulsen \cite{Davidson-Paulsen1997:PolBddOps}:
	
	\begin{proposition}\label{Proposition:Davidson-Paulsen1}
		Let $\alpha>0$, and $\HC$ be a separable, infinite-dimensional  Hilbert space, $\LC$ its space of bounded linear transformations. Then there exists an analytic function $\phi:\D\to \LC$ such that $D^\alpha\Gamma_\phi$ is bounded on $H^2\left(\HC\right)$, while $\Gamma_\phi D^\alpha$ is not. Moreover, $\phi$ may be chosen to be rank one-valued.
	\end{proposition}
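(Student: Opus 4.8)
The plan is to exhibit one explicit symbol and compute both operator norms by hand, the rank-one-valued requirement being built into the construction. Fix an orthonormal basis $\left(e_j\right)_{j\ge 0}$ of $\HC$ and, for $u,v\in\HC$, write $u\otimes v$ for the rank-one operator $x\mapsto\left\langle x,v\right\rangle_\HC u$. I would take
\[
\phi\left(z\right)=\sum_{j\ge 1} 2^{-\alpha j}\, z^{2^j}\,\left(e_0\otimes e_j\right),\quad z\in\D,
\]
so that $\hat\phi\left(k\right)=2^{-\alpha j}\left(e_0\otimes e_j\right)$ when $k=2^j$ for some $j\ge 1$, and $\hat\phi(k)=0$ otherwise. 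Because every coefficient shares the left factor $e_0$, the value $\phi\left(z\right)$ has rank at most one for each $z$, and $\phi\in H^2\left(\LC\right)$ since $\sum_{j\ge1} 2^{-2\alpha j}<\infty$; in particular all operators below are well defined on $\OC\left(\HC\right)$.

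The key structural point is that, for this symbol, $\Gamma_\phi$ and its weighted variants decompose as orthogonal direct sums of rank-one maps indexed by the output level. From \eqref{Eq:HardyHankelFormula} the $N$th Taylor coefficient of $\Gamma_\phi f$ equals $e_0$ times the scalar $\sum_{j:\,2^j\ge N} 2^{-\alpha j}\left\langle\hat f\left(2^j-N\right),e_j\right\rangle_\HC$. Since distinct coefficients of $\phi$ use distinct right factors $e_j$, each scalar coordinate $\left\langle\hat f\left(M\right),e_j\right\rangle_\HC$ of $f$ is fed into at most one output level, namely $N=2^j-M$; and since every output vector lies in $\C e_0$, the blocks at distinct levels $N$ are mutually orthogonal in $H^2\left(\HC\right)$. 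Hence $\Gamma_\phi$, as well as $D^\alpha\Gamma_\phi$ and $\Gamma_\phi D^\alpha$, is an orthogonal direct sum of rank-one functionals, one per $N$, and I can read the norms off as suprema:
\[
\left\|D^\alpha\Gamma_\phi\right\|^2=\sup_{N\ge 0}\left(1+N\right)^{2\alpha}\!\!\sum_{j:\,2^j\ge N}\!\!2^{-2\alpha j},\qquad \left\|\Gamma_\phi D^\alpha\right\|^2=\sup_{N\ge 0}\sum_{j:\,2^j\ge N}\!\!\left(1+2^j-N\right)^{2\alpha}2^{-2\alpha j}.
\]

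It then remains to verify the asymmetry between these two expressions, which is where lacunarity does its work. For the first, the tail $\sum_{j:\,2^j\ge N}2^{-2\alpha j}$ is comparable to its leading term $\asymp N^{-2\alpha}$, which cancels the factor $\left(1+N\right)^{2\alpha}$, so the supremum is finite and $D^\alpha\Gamma_\phi$ is bounded. For the second, the term at $N=0$ is already $\sum_{j\ge 1}\left(1+2^j\right)^{2\alpha}2^{-2\alpha j}\asymp\sum_{j\ge 1}1=\infty$, so $\Gamma_\phi D^\alpha$ is unbounded; concretely, the normalized polynomials $f_J=J^{-1/2}\sum_{j=1}^J z^{2^j}e_j\in\OC_1\left(\HC\right)$ satisfy $\left\|\Gamma_\phi D^\alpha f_J\right\|_{H^2\left(\HC\right)}\gtrsim\sqrt J$. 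The one point that needs genuine care is the orthogonal direct-sum reduction: one must check that the symbol really sends all output into $\C e_0$ and that distinct right factors make the input coordinates of different levels disjoint, so that each norm is a clean supremum over $N$. Once that is in place, the two formulas differ only in whether the weight attaches to the output index $N$ or to the input index $2^j-N$, and the geometric growth of the support separates the finite case from the divergent one. The unboundedness of $D^\alpha$ causes no difficulty, since boundedness of $\Gamma_\phi D^\alpha$ is understood throughout as finiteness of the supremum over $\OC_1\left(\HC\right)$.
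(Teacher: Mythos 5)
Your proof is correct and is essentially the paper's own argument: both take a rank-one-valued symbol of the form $\sum_n \beta_n \left(x\otimes e_n\right) z^n$ whose orthonormal right factors turn the weighted Hankel operators into orthogonal direct sums of rank-one blocks, so that the two operator norms reduce to exactly the same suprema, $\sup_N\left(1+N\right)^{2\alpha}\sum_{k\ge N}\left|\beta_k\right|^2$ and $\sup_N\sum_{k\ge N}\left(1+k-N\right)^{2\alpha}\left|\beta_k\right|^2$, with the asymmetry then read off from the coefficient decay. The only differences are cosmetic: you specialize to lacunary coefficients $\beta_{2^j}=2^{-\alpha j}$ rather than the paper's choice $\beta_n=\left(1+n\right)^{-\alpha-1/2}$, and you work directly with $\Gamma_\phi$ where the paper passes to adjoints and the matrix representation $X=[\beta_{m+n}e_{m+n}]_{m,n\ge 0}$, whose orthogonal columns encode the same direct-sum structure you verify by hand.
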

	
	\begin{proposition}\label{Proposition:Davidson-Paulsen2}
		Let $\alpha>0$, and $\HC$ be a separable, infinite dimensional  Hilbert space, $\LC$ its space of bounded linear transformations. Then there exists a bounded analytic function $\phi:\D\to \LC$ such that $D^\alpha\Gamma_{D^{-\alpha}\phi}$ is not bounded on $H^2\left(\HC\right)$.
	\end{proposition}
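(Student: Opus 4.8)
The plan is to use Theorem \ref{Theorem:HankelCarleson} to strip away both $\alpha$ and the operator $\Gamma$, reducing the statement to a single membership question. Given $\phi\in\Hol(\LC)$, set $\psi=D^{-\alpha}\phi$, which again lies in $\Hol(\LC)$ since $D^{-\alpha}$ is a bijection of $\Hol(\LC)$. Then $D^\alpha\Gamma_{D^{-\alpha}\phi}=D^\alpha\Gamma_\psi$, and Theorem \ref{Theorem:HankelCarleson} shows that this operator is $H^2(\HC)$-bounded if and only if $D^\alpha\psi=D^\alpha D^{-\alpha}\phi=\phi$ lies in $\textrm{BMOA}_\CC(\LC)$. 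Thus the proposition is equivalent to producing a bounded analytic $\phi\in H^\infty(\LC)$ with $\phi\notin\textrm{BMOA}_\CC(\LC)$; that is, a bounded analytic symbol whose anti-analytic Carleson embedding \eqref{Eq:AAC} fails. After this reduction the exponent $\alpha$ survives only as a parameter in the embedding, and the difficulty is now purely about the Carleson condition.

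Next I would record the contrasting fact that the \emph{analytic} embedding $\sup_{f\in\OC_1(\HC)}\int_\D\|(D\phi)(z)f(z)\|_\HC^2(1-|z|^2)\,dA(z)$ is finite for \emph{every} $\phi\in H^\infty(\LC)$. Indeed, for $f\in\OC(\HC)$ the product $\phi f$ lies in $H^2(\HC)$ with $\|\phi f\|_{H^2(\HC)}\le\|\phi\|_\infty\|f\|_{H^2(\HC)}$; the Littlewood--Paley identity bounds $\int_\D\|(\phi f)'(z)\|_\HC^2(1-|z|^2)\,dA(z)$ by a constant times $\|\phi f\|_{H^2(\HC)}^2$, and writing $(\phi f)'=\phi' f+\phi f'$ and absorbing the term $\phi f'$ (again by Littlewood--Paley, applied to $f$) isolates $\int_\D\|\phi'(z)f(z)\|_\HC^2(1-|z|^2)\,dA(z)$; passing from $\phi'$ to $D\phi$ costs only comparable terms. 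This argument uses only \emph{multiplication} by $\phi$ and never pairs a derivative of $f$ against $\phi$, which is precisely why it has no anti-analytic counterpart: the function $z\mapsto\phi(z)f(\conj z)$ is neither analytic nor a boundary product of $H^2$ functions, leaving room for the operator-valued geometry to force unboundedness. This is the analytic-versus-anti-analytic dichotomy raised by Nazarov, Pisier, Treil, and Volberg, and it tells me the counterexample must be genuinely operator-valued.

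For the construction itself I would take the bounded analytic operator function arising in the Davidson--Paulsen treatment of the Halmos problem \cite{Davidson-Paulsen1997:PolBddOps}: a lacunary symbol $\phi(z)=\sum_k\hat\phi(2^k)z^{2^k}$ whose coefficients are modelled on a noncommutative (free or Rademacher-type) family, arranged so that $\|\phi\|_{H^\infty(\LC)}<\infty$ while the associated weighted Hankel matrix is unbounded. Concretely, $D^\alpha\Gamma_{D^{-\alpha}\phi}$ is the entrywise product of the Hankel matrix $(\hat\phi(m+n))_{m,n}$ with the weight $w_{n,m}=\big((1+n)/(1+m+n)\big)^\alpha$, which depends only on $n$ and on the antidiagonal index $m+n$ and satisfies $0<w_{n,m}\le 1$. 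To keep the argument uniform in $\alpha$, I would check that the Davidson--Paulsen lower bound is carried by entries on which $w_{n,m}$ is bounded below by a positive constant depending only on $\alpha$, so that their lower bound survives multiplication by $w_{n,m}$ for every $\alpha>0$, not merely for the normalization they treat. Equivalently, in the integral form \eqref{Eq:AAC}, I would exhibit normalized $f_N\in\OC_1(\HC)$, adapted to the coefficient structure, for which $\int_\D\|(D\phi)(z)f_N(\conj z)\|_\HC^2(1-|z|^2)\,dA(z)\to\infty$.

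The main obstacle is the construction of the coefficients $\hat\phi(2^k)$, which must simultaneously satisfy two competing demands: the bound $\|\phi\|_{H^\infty(\LC)}<\infty$, whose verification for an operator-valued lacunary series is genuinely noncommutative and relies on a Kahane/free-probability type estimate rather than on the scalar fact that square-summable lacunary coefficients yield an $H^\infty$ function; and the divergence of \eqref{Eq:AAC}. That no scalar symbol can work is structural: in the scalar case $w_{n,m}$ is a bounded Schur multiplier, so $D^\alpha\Gamma_{D^{-\alpha}\phi}$ remains bounded for every $\phi\in H^\infty$. As emphasized in \cite{Davidson-Paulsen1997:PolBddOps}*{Section 4}, the point is exactly that this Schur multiplier fails to act boundedly on Hankel matrices with operator entries; this failure both forces the operator-valued construction and supplies the required lower bound. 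Finally, combining the construction with the second paragraph shows that this single $\phi$ has bounded analytic embedding but unbounded anti-analytic embedding, which is the separation asked for by Nazarov, Pisier, Treil, and Volberg.
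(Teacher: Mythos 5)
Your opening reduction via Theorem \ref{Theorem:HankelCarleson} is logically sound (the theorem's proof is independent of this proposition), though it inverts the paper's architecture: the paper proves Proposition \ref{Proposition:Davidson-Paulsen2} directly, by matrix methods, and only afterwards combines it with Theorem \ref{Theorem:HankelCarleson} to get the embedding counterexample; your second paragraph, on boundedness of the analytic embedding for every $\phi\in H^\infty\left(\LC\right)$, reproves a fact the paper uses only for that corollary and is not needed here. The genuine gap is in the construction, and it is twofold. First, your lower-bound mechanism is unsound: you propose to locate entries where $w_{n,m}=\left(\left(1+n\right)/\left(1+m+n\right)\right)^\alpha$ is bounded below and argue that ``their lower bound survives multiplication by $w_{n,m}$.'' But Schur-product norms are not monotone in the entries (a multiplier with entries in $\left[c,1\right]$ can still inflate or deflate norms by unbounded factors), and, more decisively, there is no lower bound to carry: since $\phi\in H^\infty\left(\LC\right)$, the operator $\Gamma_\phi$ is bounded with norm at most $\left\|\phi\right\|_{H^\infty\left(\LC\right)}$, so \emph{every} sub-block of the unweighted Hankel matrix $\left(\hat\phi\left(m+n\right)\right)_{m,n}$ has norm at most $\left\|\phi\right\|_{H^\infty\left(\LC\right)}$. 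The divergence must be generated by the variation of the weight across scales destroying cancellation among the operator coefficients, not preserved from a pre-existing big block. Second, your structural claim that ``in the scalar case $w_{n,m}$ is a bounded Schur multiplier'' is false: by Bennett's criterion the iterated limits of $b_{mn}=\left(\left(1+m\right)/\left(1+m+n\right)\right)^\alpha$ are $0$ and $1$, so $S_B$ is unbounded on $\LC\left(l^2\left(\N_0\right)\right)$ --- and unboundedness of a Schur multiplier is witnessed by \emph{scalar} matrices. What is true is that the bad scalar matrices $A$ are not Hankel matrices of scalar $H^\infty$ symbols; the role of operator-valuedness is to circumvent exactly that obstruction.

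This points to the missing idea, which is the heart of the paper's proof and which replaces your entire lacunary/noncommutative-Khintchine program: given an \emph{arbitrary} scalar matrix $A=\left[a_{kl}\right]_{k,l\ge0}$, define $\phi\left(z\right)=\diag\left(z^k\right)_{k\ge0}\,A\,\diag\left(z^l\right)_{l\ge0}$, i.e. $\hat\phi\left(n\right)=\sum_{k+l=n}a_{kl}E_{kl}$. Then $\left\|\phi\right\|_{H^\infty\left(\LC\right)}=\left\|A\right\|_\LC$ for free --- no randomness or free-probability estimate is needed for the $H^\infty$ bound --- and, with the isometry $V:l^2\left(\N_0\right)\to H^2\left(\HC\right)$, $Ve_n=e_nz^n$, a direct computation gives $V^*D^\alpha\Gamma_{D^{-\alpha}\phi}V=S_B\left(A\right)$. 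Since $S_B$ is unbounded, a closed-graph argument produces a bounded $A$ for which $S_B\left(A\right)$ is unbounded, and the corresponding $\phi$ finishes the proof, uniformly in $\alpha>0$ (the iterated limits are $0$ and $1$ for every $\alpha>0$, so no separate uniformity check of the kind you propose is needed). Your sketch, by contrast, never specifies the coefficients $\hat\phi\left(2^k\right)$, never proves either the $H^\infty$ bound or the divergence of \eqref{Eq:AAC}, and its one concrete verification step rests on the invalid pointwise-lower-bound principle above; as it stands, it is a plan rather than a proof, and the plan's key step would fail.
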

	\begin{remark}\label{Remark:Davidson-Paulsen}
		Proposition \ref{Proposition:Davidson-Paulsen1} is stated for $\alpha=1$ in \cite{Davidson-Paulsen1997:PolBddOps}*{Example 4.6}. Proposition \ref{Proposition:Davidson-Paulsen2} is essentially stated for $\alpha=1$ in \cite{Davidson-Paulsen1997:PolBddOps}*{Corollary 4.2}, but this does not explicitly mention the boundedness of $\phi$, even though it follows from the original proof. A dyadic analogue of this result has been proved by Mei \cite{Mei2006:MatValParaprods}. For the convenience of the reader, we present proofs of the above propositions in Section \ref{Sec:Davidson-Paulsen}.
	\end{remark}
	
	Combining the results by Davidson and Paulsen with Theorem \ref{Theorem:HankelCarleson}, we are able to derive several interesting results.
		
	Given $\phi\in\Hol\left(\LC\right)$, we define the function  $\phi^\#:z\mapsto \phi\left(\conj{ z}\right)^*$. This is the function obtained by taking the Hilbert space conjugate of each Taylor coefficient of $\phi$. Note that $\Gamma_\phi D=\left(D\Gamma_{\phi^\#}\right)^*$. By Proposition \ref{Proposition:Davidson-Paulsen1} and Theorem \ref{Theorem:HankelCarleson}, it follows that $\textrm{BMOA}_{\CC}\left(\LC\right)$ is not closed under coefficient conjugation (cf. \cite{Aleman-Perfekt2012:HankFrmsEmbThmsDirichletSpaces}*{Proposition 3.3}). On the other hand, $\textrm{BMOA}_{\NC\PC}\left(\LC\right)$ is obviously closed under coefficient conjugation. We obtain the following corollary:
	\begin{corollary}\label{Corollary:CNP}
		Let $\HC$ be a separable infinite-dimensional Hilbert space, $\LC$ its space of bounded linear transformations. Then $\textrm{BMOA}_\CC\left(\LC\right)$ is not closed under the map $\phi\mapsto\phi^\#$, where $\phi^\#\left(z\right)=\phi\left(\conj{ z}\right)^*$. In particular 
		\[
		\textrm{BMOA}_{\CC}\left(\LC\right)\ne \textrm{BMOA}_{\NC\PC}\left(\LC\right),
		\]
		i.e. $H^2\left(\HC\right)$-boundedness of $\Gamma_\phi$ is not characterized by the anti-analytic Carleson embedding condition indicated by Theorem \ref{Theorem:HankelCarleson}.
	\end{corollary}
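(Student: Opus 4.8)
The plan is to reduce the statement to the two ingredients already at hand: Proposition \ref{Proposition:Davidson-Paulsen1}, which supplies a symbol $\phi$ witnessing an asymmetry between $D^\alpha\Gamma_\phi$ and $\Gamma_\phi D^\alpha$, and Theorem \ref{Theorem:HankelCarleson}, which translates boundedness of $D^\alpha\Gamma_\phi$ into membership of $D^\alpha\phi$ in $\textrm{BMOA}_\CC(\LC)$. The only genuinely new ingredient I need is a clean dictionary between the coefficient-conjugation map $\phi\mapsto\phi^\#$ on symbols and the adjoint operation on Hankel operators.

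First I would record the relevant operator identities. Writing the matrix of $\Gamma_\phi$ in the monomial basis, its $(n,m)$ block is $\hat\phi(n+m)$, so the $(n,m)$ block of the adjoint $\Gamma_\phi^*$ is $\hat\phi(n+m)^*$; since $\widehat{\phi^\#}(n)=\hat\phi(n)^*$, this is exactly the block of $\Gamma_{\phi^\#}$. Hence $\Gamma_{\phi^\#}=\Gamma_\phi^*$ on the dense domain $\OC(\HC)$. Because $D^\alpha$ is diagonal with the real, positive entries $(1+n)^\alpha$, it is self-adjoint, and it commutes with the $\#$-map, i.e. $(D^\alpha\phi)^\#=D^\alpha\phi^\#$. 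Combining these gives $(D^\alpha\Gamma_{\phi^\#})^*=\Gamma_\phi D^\alpha$, the general-$\alpha$ form of the identity $\Gamma_\phi D=(D\Gamma_{\phi^\#})^*$ recorded above.

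Next I would invoke Proposition \ref{Proposition:Davidson-Paulsen1} (with $\alpha=1$, say) to fix an analytic $\phi:\D\to\LC$ for which $D^\alpha\Gamma_\phi$ is $H^2(\HC)$-bounded while $\Gamma_\phi D^\alpha$ is not. Set $\psi=D^\alpha\phi$. By Theorem \ref{Theorem:HankelCarleson}, boundedness of $D^\alpha\Gamma_\phi$ gives $\psi=D^\alpha\phi\in\textrm{BMOA}_\CC(\LC)$. On the other hand, an operator is bounded if and only if its adjoint is; since $\Gamma_\phi D^\alpha=(D^\alpha\Gamma_{\phi^\#})^*$ is unbounded, so is $D^\alpha\Gamma_{\phi^\#}$, and Theorem \ref{Theorem:HankelCarleson} applied to $\phi^\#$ yields $D^\alpha\phi^\#\notin\textrm{BMOA}_\CC(\LC)$. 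As $\psi^\#=(D^\alpha\phi)^\#=D^\alpha\phi^\#$, we conclude $\psi\in\textrm{BMOA}_\CC(\LC)$ but $\psi^\#\notin\textrm{BMOA}_\CC(\LC)$, so $\textrm{BMOA}_\CC(\LC)$ is not closed under $\#$.

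Finally, to separate the two classes I would note that $\textrm{BMOA}_{\NC\PC}(\LC)$ \emph{is} closed under $\#$: by Definition \ref{def:NP} membership is boundedness of $\Gamma_\phi$, and $\Gamma_{\phi^\#}=\Gamma_\phi^*$ is bounded exactly when $\Gamma_\phi$ is. If the two classes coincided, $\textrm{BMOA}_\CC(\LC)$ would inherit closure under $\#$, contradicting the previous paragraph; hence $\textrm{BMOA}_\CC(\LC)\ne\textrm{BMOA}_{\NC\PC}(\LC)$. I expect the only delicate point to be the justification of $\Gamma_{\phi^\#}=\Gamma_\phi^*$ in the vectorial, WOT-based setting, in particular that the formal matrix computation is legitimate on $\OC(\HC)$ and that passing to adjoints genuinely preserves (un)boundedness; everything else is bookkeeping once Proposition \ref{Proposition:Davidson-Paulsen1} and Theorem \ref{Theorem:HankelCarleson} are in hand.
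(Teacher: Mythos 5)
Your proposal is correct and takes essentially the same route as the paper, which proves the corollary in the paragraph preceding its statement via the identity $\Gamma_\phi D^\alpha=\left(D^\alpha\Gamma_{\phi^\#}\right)^*$ together with Proposition \ref{Proposition:Davidson-Paulsen1} and Theorem \ref{Theorem:HankelCarleson}, plus the observation that $\textrm{BMOA}_{\NC\PC}\left(\LC\right)$ is obviously closed under coefficient conjugation. Your extra verifications (the block-matrix computation showing $\Gamma_{\phi^\#}=\Gamma_\phi^*$ and the commutation $\left(D^\alpha\phi\right)^\#=D^\alpha\phi^\#$) simply make explicit what the paper leaves implicit.
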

	
	Corollary \ref{Corollary:CNP} motivates the following definition:
	\begin{definition}\label{def:CS}
		Let $\phi\in\Hol\left(\LC\right)$. We then say that $\phi\in \textrm{BMOA}_{\CC^\#}\left(\LC\right)$ if and only if $\phi^\#\in \textrm{BMOA}_{\CC}\left(\LC\right)$.
	\end{definition}
			
	Consider now the relation 
	\begin{align}\label{Eq:LeibnizDecomposition}
	\Gamma_{D\phi} 
	&=
	D\Gamma_{\phi}+\left(D\Gamma_{\phi^\#}\right)^*-\Gamma_\phi,
	\end{align}
	which is obtained by duality, and the Leibniz rule for $D$. The operator $\Gamma_\phi$ is bounded on $H^2\left(\HC\right)$, whenever any of the other terms in \eqref{Eq:LeibnizDecomposition} is bounded, since then $D\phi$ is a Bloch function (cf. Lemma \ref{Lemma:HankelCarlesonBloch} below). In the light of Theorem \ref{Theorem:HankelCarleson}, it is then clear from \eqref{Eq:LeibnizDecomposition} that
	\begin{equation}\label{Eq:CC*NP}
	\textrm{BMOA}_{\CC}\left(\LC\right)\cap \textrm{BMOA}_{\CC^\#}\left(\LC\right)\subsetneq \textrm{BMOA}_{\NC\PC}\left(\LC\right).
	\end{equation}
	We point out that the above inclusion also follows implicitly from the proof of \cite{Nazarov-Pisier-Treil-Volberg2002:EstsVecCarlesonEmbThmVecParaprods}*{Theorem 0.8}. However, we obtain also that the inclusion is strict. To see that this is so, suppose that it is not. This would only be possible if $\textrm{BMOA}_{\NC\PC}\left(\LC\right)$ was contained in $\textrm{BMOA}_{\CC}\left(\LC\right)$. By another application of Theorem \ref{Theorem:HankelCarleson}, this would contradict Proposition \ref{Proposition:Davidson-Paulsen2}. We summarize the above discussion:
	\begin{corollary}\label{Corollary:CC*NP}
		Let $\HC$ be a separable Hilbert space, $\LC$ its space of bounded linear transformations. If $\phi:\D\to\LC$ is an analytic function such that
		\[
		\left\|\phi\right\|_{\textrm{BMOA}_\CC}=\sup_{f\in\OC_1\left(\HC\right)}\int_\D \left\| \left(D\phi\right)\left(z\right) f\left(\conj{ z}\right)\right\|_\HC^2\left(1-\left|z\right|^2\right)dA\left(z\right)<\infty,
		\]
		and
		\[
		\left\|\phi^\#\right\|_{\textrm{BMOA}_\CC}=\sup_{f\in\OC_1\left(\HC\right)}\int_\D \left\| \left(D\phi\right)\left(\conj{ z}\right)^* f\left(\conj{ z}\right)\right\|_\HC^2\left(1-\left|z\right|^2\right)dA\left(z\right)<\infty,
		\]
		then
		\[
		\left\|\Gamma_\phi\right\|_{H^2\left(\HC\right)\to\HC^2\left(\HC\right)}=\sup_{f\in\OC_1\left(\HC\right)}\left\|\Gamma_\phi f\right\|_{H^2\left(\HC\right)}<\infty.
		\]
		Moreover,
		\[
		\left\|\Gamma_\phi\right\|_{H^2\left(\HC\right)\to\HC^2\left(\HC\right)}\lesssim \left\|\phi\right\|_{\textrm{BMOA}_\CC}+\left\|\phi^\#\right\|_{\textrm{BMOA}_\CC}.
		\]
		If $\HC$ is infinite dimensional, then the converse statement does not hold.		
	\end{corollary}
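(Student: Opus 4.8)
The plan is to treat the two assertions separately: first sufficiency together with the quantitative bound, then the failure of the converse in the infinite-dimensional case. Sufficiency will come from a single application of the Leibniz identity \eqref{Eq:LeibnizDecomposition}, and the failure of the converse from combining Proposition \ref{Proposition:Davidson-Paulsen2} with Theorem \ref{Theorem:HankelCarleson}. The two genuinely nontrivial inputs, Theorem \ref{Theorem:HankelCarleson} and Proposition \ref{Proposition:Davidson-Paulsen2}, are already available, so the work here is mostly bookkeeping of orders of differentiation.

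For sufficiency I would apply \eqref{Eq:LeibnizDecomposition} not to $\phi$ itself but to the symbol $\psi=D^{-1}\phi$, which again lies in $\Hol\left(\LC\right)$ since $D^{-1}$ is a bijection of $\Hol\left(\LC\right)$ leaving $\OC\left(\LC\right)$ invariant. Because $D\psi=\phi$ and $\psi^\#=D^{-1}\phi^\#$ (as coefficient conjugation commutes with $D^{-1}$), the identity becomes
\[
\Gamma_\phi = D\Gamma_{D^{-1}\phi} + \left(D\Gamma_{D^{-1}\phi^\#}\right)^* - \Gamma_{D^{-1}\phi},
\]
valid a priori on the dense subspace $\OC\left(\HC\right)$. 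The first term is controlled by Theorem \ref{Theorem:HankelCarleson} with $\alpha=1$ applied to the symbol $D^{-1}\phi$: since $D^{1+1}\left(D^{-1}\phi\right)=D\phi$, the resulting Carleson integrand is exactly that of Definition \ref{def:C}, whence $\left\|D\Gamma_{D^{-1}\phi}\right\|\approx\left\|\phi\right\|_{\textrm{BMOA}_\CC}$. The same reasoning applied to $D^{-1}\phi^\#$ gives $\left\|\left(D\Gamma_{D^{-1}\phi^\#}\right)^*\right\|=\left\|D\Gamma_{D^{-1}\phi^\#}\right\|\approx\left\|\phi^\#\right\|_{\textrm{BMOA}_\CC}$.

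The remaining term $\Gamma_{D^{-1}\phi}$ carries no new hypothesis: $D^{-1}$ is a contraction on $H^2\left(\HC\right)$, its Taylor multipliers $\left(1+n\right)^{-1}$ being bounded by one, so from $\Gamma_{D^{-1}\phi}=D^{-1}\left(D\Gamma_{D^{-1}\phi}\right)$ we read off $\left\|\Gamma_{D^{-1}\phi}\right\|\le\left\|D\Gamma_{D^{-1}\phi}\right\|\lesssim\left\|\phi\right\|_{\textrm{BMOA}_\CC}$; this is also the content of Lemma \ref{Lemma:HankelCarlesonBloch}, which guarantees that $D\phi$ is a Bloch function and that this undifferentiated piece is bounded. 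Having bounded all three terms on $\OC\left(\HC\right)$, the triangle inequality yields $\left\|\Gamma_\phi\right\|\lesssim\left\|\phi\right\|_{\textrm{BMOA}_\CC}+\left\|\phi^\#\right\|_{\textrm{BMOA}_\CC}$, and $\Gamma_\phi$ extends boundedly to $H^2\left(\HC\right)$. For the failure of the converse I would argue by contradiction: if the converse held for every analytic $\phi$, then $H^2\left(\HC\right)$-boundedness of $\Gamma_\phi$ would force $\left\|\phi\right\|_{\textrm{BMOA}_\CC}<\infty$, i.e. $\textrm{BMOA}_{\NC\PC}\left(\LC\right)\subseteq\textrm{BMOA}_\CC\left(\LC\right)$. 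Take the bounded analytic $\phi$ supplied by Proposition \ref{Proposition:Davidson-Paulsen2}, where infinite-dimensionality of $\HC$ is used. Being bounded, $\phi\in P_+L^\infty\left(\T,\LC\right)$, so by Page's theorem $\Gamma_\phi$ is $H^2\left(\HC\right)$-bounded, i.e. $\phi\in\textrm{BMOA}_{\NC\PC}\left(\LC\right)$. Yet Theorem \ref{Theorem:HankelCarleson} applied to $D^{-\alpha}\phi$ shows that $D^\alpha\Gamma_{D^{-\alpha}\phi}$ is bounded precisely when $\phi=D^\alpha\left(D^{-\alpha}\phi\right)\in\textrm{BMOA}_\CC\left(\LC\right)$; since Proposition \ref{Proposition:Davidson-Paulsen2} asserts the former is unbounded, we get $\phi\notin\textrm{BMOA}_\CC\left(\LC\right)$, contradicting the inclusion.

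I expect the only delicate point to be the navigation of the critical order $\alpha=0$: the norms $\left\|\phi\right\|_{\textrm{BMOA}_\CC}$ and $\left\|\phi^\#\right\|_{\textrm{BMOA}_\CC}$ sit exactly at the endpoint not covered by Theorem \ref{Theorem:HankelCarleson}, and the whole argument hinges on shifting them into the admissible range $\alpha=1$ via the substitution $\psi=D^{-1}\phi$, then disposing of the leftover undifferentiated term $\Gamma_{D^{-1}\phi}$ through the contractivity of $D^{-1}$ and Lemma \ref{Lemma:HankelCarlesonBloch}. The secondary care needed is to confirm that the operator identity, initially an identity on $\OC\left(\HC\right)$, persists after each summand is extended boundedly to $H^2\left(\HC\right)$, which follows routinely from density of $\OC\left(\HC\right)$.
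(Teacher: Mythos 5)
Your proof is correct and follows essentially the paper's own route: the same Leibniz identity \eqref{Eq:LeibnizDecomposition}, applied (after the substitution $\psi=D^{-1}\phi$, legitimate since the multiplier coefficients of $D^{-1}$ are real, so $\left(D^{-1}\phi\right)^\#=D^{-1}\phi^\#$) so that the two differentiated terms are controlled by Theorem \ref{Theorem:HankelCarleson} at $\alpha=1$ with norms comparable to $\left\|\phi\right\|_{\textrm{BMOA}_\CC}$ and $\left\|\phi^\#\right\|_{\textrm{BMOA}_\CC}$; and the failure of the converse is obtained exactly as in the paper, by noting that the converse would force $\textrm{BMOA}_{\NC\PC}\left(\LC\right)\subseteq\textrm{BMOA}_{\CC}\left(\LC\right)$, which Proposition \ref{Proposition:Davidson-Paulsen2} combined with Theorem \ref{Theorem:HankelCarleson} (applied to the symbol $D^{-\alpha}\phi$) rules out. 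The one genuine difference is your treatment of the leftover term $\Gamma_{D^{-1}\phi}$: the paper disposes of it by observing that boundedness of any differentiated term forces $\phi=D\left(D^{-1}\phi\right)$ to be a Bloch function, via Lemma \ref{Lemma:HankelCarlesonBloch}, and then invoking the standard fact that a symbol whose image under $D$ is Bloch generates a bounded Hankel operator; your factorization $\Gamma_{D^{-1}\phi}=D^{-1}\bigl(D\Gamma_{D^{-1}\phi}\bigr)$ through the $H^2\left(\HC\right)$-contraction $D^{-1}$ is simpler and entirely self-contained, yielding $\left\|\Gamma_\psi\right\|\le\left\|D\Gamma_\psi\right\|$ for any symbol $\psi$ and bypassing the Bloch step altogether. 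One cosmetic correction: your parenthetical claim that this ``is also the content of Lemma \ref{Lemma:HankelCarlesonBloch}'' overstates that lemma, which only gives the Bloch bound $\left\|D^\alpha\phi\right\|_{\BC\left(\LC\right)}\lesssim\left\|\phi\right\|_{k,\alpha}$ and does not by itself bound the undifferentiated Hankel operator; since your contraction argument stands on its own, this is a mis-attribution rather than a gap.
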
	
	
	Condition \eqref{Eq:AAC} states that $H^2\left(\HC\right)$ is continuously embedded into $L^2\left(\D,\HC,d\mu\right)$, where $\mu$ is a certain operator valued measure. It is natural to think of this as an embedding of anti-analytic functions, rather than analytic ones. For this reason, we call \eqref{Eq:AAC} the anti-analytic Carleson embedding, to be distinguished from the analytic one, which is given by the straightforward modification \eqref{Eq:AC} below. In the scalar case it is obvious that these two conditions are equivalent. In the general case, this is no longer obvious. In fact, whether or not the two conditions define the same class of functions was posed as an open question by Nazarov, Treil, and Volberg in \cite{Nazarov-Treil-Volberg1997:CounterExInfDimCarlesonEmbThm}. They later restated the question in a joint paper with Pisier \cite{Nazarov-Pisier-Treil-Volberg2002:EstsVecCarlesonEmbThmVecParaprods}. We answer this question in the negative:
	
	\begin{corollary}
		Let $\HC$ be a separable infinite-dimensional Hilbert space, $\LC$ its space of bounded linear transformations. Then there exists a bounded analytic function $\phi:\D\to\LC$ such that
		\begin{equation}\label{Eq:AC}
			\sup_{f\in\OC_1\left(\HC\right)}\int_\D \left\| \left(D\phi\right)\left(z\right) f\left(z\right)\right\|_\HC^2\left(1-\left|z\right|^2\right)dA\left(z\right)<\infty,
		\end{equation}
		while
		\[\tag{\ref{Eq:AAC}$'$}
		\sup_{f\in\OC_1\left(\HC\right)}\int_\D \left\| \left(D\phi\right)\left(z\right) f\left(\conj{ z}\right)\right\|_\HC^2\left(1-\left|z\right|^2\right)dA\left(z\right)=\infty.
		\]
	\end{corollary}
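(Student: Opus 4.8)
The plan is to take as $\phi$ the bounded analytic function furnished by Proposition \ref{Proposition:Davidson-Paulsen2}, and to verify that it has the two seemingly contradictory required properties. The failure of the anti-analytic embedding will be essentially immediate from the machinery already in place, while boundedness of the analytic embedding \eqref{Eq:AC} will hold for \emph{every} bounded analytic symbol. The asymmetry between these two facts is precisely the phenomenon the corollary exhibits.

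First I would dispose of the anti-analytic side. Fix $\alpha=1$ and apply Proposition \ref{Proposition:Davidson-Paulsen2} to obtain a bounded analytic $\phi:\D\to\LC$ for which $D\Gamma_{D^{-1}\phi}$ is not $H^2\left(\HC\right)$-bounded. Writing $\psi=D^{-1}\phi$, so that $D\psi=\phi$, the unboundedness of $D\Gamma_\psi$ translates, via Theorem \ref{Theorem:HankelCarleson} (with $\alpha=1$), into the statement $D\psi=\phi\notin\textrm{BMOA}_\CC\left(\LC\right)$. By Definition \ref{def:C} this says precisely that
\[
\sup_{f\in\OC_1\left(\HC\right)}\int_\D \left\|\left(D\phi\right)\left(z\right)f\left(\conj{z}\right)\right\|_\HC^2\left(1-\left|z\right|^2\right)dA\left(z\right)=\infty,
\]
which is the required divergence (\ref{Eq:AAC}$'$).

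The substance of the argument is the analytic side, where I claim that every $\phi\in H^\infty\left(\LC\right)$ satisfies \eqref{Eq:AC}. The key is the elementary product identity for the operator $D$,
\[
\left(D\phi\right)f = D\left(\phi f\right)-\phi\left(Df\right)+\phi f,
\]
valid for analytic $\phi$ and $f$ (checked directly on Taylor coefficients, using $z(\phi f)'=z\phi' f+z\phi f'$). Combining this with the Littlewood--Paley equivalence $\int_\D\|(Dg)(z)\|_\HC^2(1-|z|^2)\,dA(z)\approx\|g\|_{H^2\left(\HC\right)}^2$ and the triangle inequality in $L^2\left(\D,\HC,(1-|z|^2)\,dA\right)$, the quantity $\bigl(\int_\D\|(D\phi)(z)f(z)\|_\HC^2(1-|z|^2)\,dA\bigr)^{1/2}$ is bounded by a sum of three terms coming from $D(\phi f)$, $\phi(Df)$, and $\phi f$. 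Each is controlled by $\|\phi\|_{H^\infty}\|f\|_{H^2\left(\HC\right)}$: the first because $\int_\D\|D(\phi f)\|^2(1-|z|^2)\,dA\approx\|\phi f\|_{H^2}^2\le\|\phi\|_{H^\infty}^2\|f\|_{H^2}^2$, using the pointwise multiplier bound $\|(\phi f)(e^{i\theta})\|_\HC\le\|\phi\|_{H^\infty}\|f(e^{i\theta})\|_\HC$ on the boundary; the second and third because $\|\phi(z)\|_\LC\le\|\phi\|_{H^\infty}$ pointwise, together with $\int_\D\|(Df)(z)\|^2(1-|z|^2)\,dA\approx\|f\|_{H^2}^2$ and $\int_\D\|f(z)\|^2(1-|z|^2)\,dA\lesssim\|f\|_{H^2}^2$. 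Taking the supremum over $f\in\OC_1\left(\HC\right)$ yields \eqref{Eq:AC} with implied constant $\lesssim\|\phi\|_{H^\infty}^2$.

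In my view the proof has no genuine obstacle once this product identity is isolated; the real content is conceptual. What makes the analytic embedding cheap is that it reduces to boundedness of the multiplication operator $M_\phi$ on $H^2\left(\HC\right)$, which holds for every $\phi\in H^\infty\left(\LC\right)$. The crucial point to emphasize is that this reduction has \emph{no} anti-analytic counterpart: the map $z\mapsto\phi(z)f\left(\conj{z}\right)$ is not analytic, there is no corresponding multiplier bound, and the Davidson--Paulsen construction shows the anti-analytic embedding can genuinely fail. Thus the only thing I would be careful about is to present the two sides so that the contrast is transparent: for one and the same $\phi$, the analytic embedding is bounded from $\|\phi\|_{H^\infty}$ alone, while the anti-analytic embedding diverges by Proposition \ref{Proposition:Davidson-Paulsen2} and Theorem \ref{Theorem:HankelCarleson}.
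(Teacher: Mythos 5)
Your proposal is correct and follows essentially the same route as the paper: the divergence $(\ref{Eq:AAC}')$ is obtained from Proposition \ref{Proposition:Davidson-Paulsen2} combined with Theorem \ref{Theorem:HankelCarleson}, and the boundedness \eqref{Eq:AC} for \emph{every} $\phi\in H^\infty\left(\LC\right)$ is deduced from the Leibniz rule for $D$ together with the fact that $D$ is an isomorphism from $H^2\left(\HC\right)$ onto $A_1^2\left(\HC\right)$ (your Littlewood--Paley identity). Your version merely spells out the product identity $\left(D\phi\right)f=D\left(\phi f\right)-\phi\left(Df\right)+\phi f$ and the three resulting estimates, which the paper leaves implicit.
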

	The proof is as follows: Since $D$ is an isomorphism from $H^2\left(\HC\right)$ to the standard weighted Bergman space $A_1^2\left(\HC\right)$, it follows from the Leibniz rule that \eqref{Eq:AC} is satisfied whenever $\phi$ is bounded. On the other hand, by Proposition \ref{Proposition:Davidson-Paulsen2} and Theorem \ref{Theorem:HankelCarleson}, there exists $\phi\in H^\infty\left(\LC\right)$ that satisfies $\left(\ref{Eq:AAC}'\right)$. \qed
	
	Using standard arguments involving duality, $D^\alpha\Gamma_\phi$ is $H^2\left(\HC\right)$-bounded if and only if $\phi$ is in the dual of the space
	\[
	D^{-\alpha}  \left(  \left(D^\alpha H^2\left(\HC\right) \right)\hat \otimes \conj{H^2\left(\HC\right)} \right)=D^{-\alpha}  \left(  \left(D^\alpha H^2\left(\SC^2\right) \right) \cdot H^2\left(\SC^2\right) \right).
	\]
	A similar statement holds for boundedness of $D^\alpha\Gamma_{\phi^\#}$. This yields an alternative formulation of Theorem \ref{Theorem:HankelCarleson}:
	\begin{corollary}\label{Corollary:Duality}
		Let $\HC$ be a separable Hilbert space, $\LC$ its space of bounded linear transformations. If $\alpha>0$, then $\textrm{BMOA}_{\CC}\left(\LC\right)$ is the dual of
		\[
		D^{-\alpha}  \left(  \left(D^\alpha H^2\left(\HC\right) \right)\hat \otimes \conj{H^2\left(\HC\right)} \right)=D^{-\alpha}  \left(  \left(D^\alpha H^2\left(\SC^2\right) \right) \cdot H^2\left(\SC^2\right) \right),
		\]
		while $\textrm{BMOA}_{\CC^\#}\left(\LC\right)$ is the dual of
		\[
		D^{-\alpha}  \left(H^2\left(\HC\right)\hat \otimes \conj{D^\alpha H^2\left(\HC\right)} \right)=D^{-\alpha}  \left(H^2\left(\SC^2\right) \cdot   \left(D^\alpha H^2\left(\SC^2\right) \right) \right).
		\]
	\end{corollary}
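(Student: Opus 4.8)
The plan is to combine Theorem \ref{Theorem:HankelCarleson} with the standard bilinear-form duality for Hankel operators, the only genuinely new point being the careful placement of the fractional derivative $D^\alpha$ on one factor of a projective tensor product. Recall first that for $\psi\in\Hol(\LC)$ the operator $\Gamma_\psi$ is $H^2(\HC)$-bounded if and only if the sesquilinear form $(f,g)\mapsto\langle\Gamma_\psi f,g\rangle$ is bounded, i.e. if and only if $\psi$ represents a bounded functional on the projective tensor product $H^2(\HC)\hat\otimes\conj{H^2(\HC)}$. By the Sarason--Page factorization this predual is exactly $H^1(\SC^1)=H^2(\SC^2)\cdot H^2(\SC^2)$, using the identification $\SC^2=\HC\hat\otimes\conj{\HC}$; this is the vectorial analogue of the scalar computation $\langle\Gamma_\phi f,g\rangle=\langle\phi,f^\# g\rangle$ recalled in the introduction.

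Next I would insert the derivative. Since $D^\alpha$ acts diagonally with the real symbol $(1+n)^\alpha$, it is self-adjoint on $H^2(\HC)$, so that $\langle D^\alpha\Gamma_\psi f,g\rangle=\langle\Gamma_\psi f,D^\alpha g\rangle$. Hence $D^\alpha\Gamma_\psi$ is bounded precisely when $\psi$ represents a bounded functional on the tensor product in which one factor has been replaced by $D^\alpha H^2(\HC)$ (equipped with the norm making $D^\alpha$ isometric). Now I invoke Theorem \ref{Theorem:HankelCarleson}: writing $\chi=D^\alpha\phi$, we have $\chi\in\textrm{BMOA}_\CC(\LC)$ if and only if $D^\alpha\Gamma_\phi=D^\alpha\Gamma_{D^{-\alpha}\chi}$ is bounded. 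Transferring the remaining $D^{-\alpha}$ from the symbol to the predual through the pairing $\langle D^{-\alpha}\chi,F\rangle=\langle\chi,D^{-\alpha}F\rangle$ then exhibits $\textrm{BMOA}_\CC(\LC)$ as the dual of $D^{-\alpha}((D^\alpha H^2(\HC))\hat\otimes\conj{H^2(\HC)})$, and Sarason--Page rewrites this tensor product as the pointwise product $D^{-\alpha}((D^\alpha H^2(\SC^2))\cdot H^2(\SC^2))$, establishing the first assertion.

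For the second assertion I would exploit the symmetry $\phi\mapsto\phi^\#$. By Definition \ref{def:CS}, $\phi\in\textrm{BMOA}_{\CC^\#}(\LC)$ if and only if $\phi^\#\in\textrm{BMOA}_\CC(\LC)$, and the relation $\Gamma_\phi D=(D\Gamma_{\phi^\#})^*$ recorded in the text shows that $\#$ implements the transpose which interchanges the two tensor factors. Concretely, $D^\alpha$ acting on the codomain of $\Gamma_\phi$ is adjoint to $D^\alpha$ acting on the domain of $\Gamma_{\phi^\#}$, so tracking this symmetry through the computation of the previous paragraph moves $D^\alpha$ from one tensor factor to the other. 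This yields that $\textrm{BMOA}_{\CC^\#}(\LC)$ is the dual of $D^{-\alpha}(H^2(\HC)\hat\otimes\conj{D^\alpha H^2(\HC)})=D^{-\alpha}(H^2(\SC^2)\cdot(D^\alpha H^2(\SC^2)))$, as claimed.

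I expect the main obstacle to be purely a matter of conjugation bookkeeping, namely deciding which of the two tensor factors absorbs $D^\alpha$ for $\textrm{BMOA}_\CC$ as opposed to $\textrm{BMOA}_{\CC^\#}$. This is not cosmetic: by Corollary \ref{Corollary:CNP} the two classes genuinely differ, so the placement of $D^\alpha$ on the analytic versus the conjugate factor carries the entire substantive content of the statement. The explicit coefficient identity $\langle D^\alpha\Gamma_\phi f,g\rangle=\sum_k\tr(\hat\phi(k)\sum_{m+n=k}(1+n)^\alpha\,\hat f(m)\otimes\hat g(n))$ shows that the weight $(1+n)^\alpha$ attaches to the codomain index $n$; the delicate step is translating this into a statement about the analytic or the conjugate tensor factor, since the rank-one/trace convention in $\SC^2=\HC\hat\otimes\conj{\HC}$ conjugates exactly one of the two indices. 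I would fix this convention once and for all, verify consistency against the adjoint relation $\Gamma_\phi D=(D\Gamma_{\phi^\#})^*$, and confirm that the two displayed preduals emerge distinct with $\textrm{BMOA}_\CC(\LC)$ matching the stated space. The remaining identifications—that each projective tensor product coincides with the corresponding pointwise product of Hilbert--Schmidt-valued Hardy spaces, and that applying $D^\alpha$ to one $H^2(\HC)$-factor corresponds to applying it to the matching $H^2(\SC^2)$-factor—are routine consequences of Sarason's theorem.
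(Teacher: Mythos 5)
Your argument is correct and is essentially the paper's own proof: the paper dispatches this corollary with the remark that ``standard arguments involving duality'' combined with Theorem \ref{Theorem:HankelCarleson} give the identification, and your proposal fills in exactly those standard arguments --- the bilinear-form identity $\left\langle f,\Gamma_\phi g\right\rangle=\left\langle f\otimes\conjvar{g},\phi\right\rangle$, moving $D^{\pm\alpha}$ across the (real-diagonal) pairing via the substitution $\chi=D^\alpha\phi$, the Sarason--Page identification of the projective tensor product with the pointwise product of $H^2\left(\SC^2\right)$-spaces, and the adjoint relation $\Gamma_\phi D^\alpha=\left(D^\alpha\Gamma_{\phi^\#}\right)^*$ for the $\textrm{BMOA}_{\CC^\#}\left(\LC\right)$ half. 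Your coefficient identity placing the weight $\left(1+n\right)^\alpha$ on the non-conjugated factor is the correct bookkeeping, so there is no gap relative to the paper's treatment.
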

			
	We return for a moment to the scalar case. By the square function characterization of $H^1$, due to Fefferman and Stein \cite{Fefferman-Stein1972:HpSpaces}, it follows that
	\begin{equation}\label{Eq:Inclusion}
	D^{-1}  \left(  \left(D^1 H^2 \right)\cdot H^2 \right)\subseteq H^1.
	\end{equation}
	A generalization to general $\alpha>0$, which also yields equality of function spaces in \eqref{Eq:Inclusion}, has been obtained by Cohn and Verbitsky \cite{Cohn-Verbitsky2000:FactTentSpacesHankOps}. By Corollary \ref{Corollary:Duality}, the dual inclusion becomes
	\[
	\textrm{BMOA}_{\NC\PC}\subseteq \textrm{BMOA}_{\CC}.
	\]
	Combined with Corollary \ref{Corollary:CC*NP}, this implies the well-known result that $\textrm{BMOA}_{\CC}=\left(H^1\right)^*$. For this argument to work, it suffices to use Theorem \ref{Theorem:HankelCarleson} with (say) $\alpha=1$, a special case which is substantially simpler to prove.
	
	The paper is structured as follows: In Section \ref{Sec:Preliminaries} we fix notation, and review some preliminary material. Of particular importance are some Bergman type spaces of analytic functions. In Section \ref{Sec:MainProof} we prove Theorem \ref{Theorem:HankelCarleson}. In Section \ref{Sec:SpecialCases} we discuss and compare the special cases of $\HC$-valued, and $\HC^*$-valued symbols, and point out some significant differences between these. In Section \ref{Sec:Davidson-Paulsen} we provide proofs of the Propositions \ref{Proposition:Davidson-Paulsen1} and \ref{Proposition:Davidson-Paulsen2}.
	
	\section{Preliminaries and further notation}\label{Sec:Preliminaries}
	
	We use the standard notation $\Z$, $\R$, and $\C$ for the respective rings of integers, real numbers, and complex numbers. By $\N$ we denote the set of strictly positive elements of $\Z$, while $\N\cup\left\{0\right\}$ is denoted by $\N_0$.
	
	Given two parametrized sets of nonnegative numbers $\left(A_i\right)_{i\in I}$ and $\left(B_i\right)_{i\in I}$, we use the notation $A_i\lesssim B_i$, $i\in I$ to indicate the existence of a positive constant $C$ such that $\forall i\in I$, $A_i\le CB_i$. We then say that $A_i$ is bounded by $B_i$, and refer to $C$ as a bound. Sometimes we allow ourselves to not mention the index set $I$ and instead let it be implicit from the context. If $A_i\lesssim B_i$ and $B_i\lesssim A_i$, then we write $A_i\approx B_i$. We then say that $A_i$ and $B_i$ are comparable.
	
	The Hilbert space adjoint of $A\in\LC$ is denoted by $A^*$. We sometimes identify $x\in\HC$ with the rank one operator $\C\ni c\mapsto cx\in\HC$. Note that $x^*$ is then the linear functional $\HC\ni y\mapsto \left\langle y,x\right\rangle_\HC\in\C$.
	
	The dual of a Banach space $\YC$ will be denoted by $\YC^*$. With Hilbert spaces in mind, we equip $\YC^*$ with an anti linear structure, rather than the standard linear ditto. Thus, the duality pairing $\left\langle y,y^*\right\rangle_\YC$, of $y\in\YC$ and $y^*\in\YC^*$, becomes anti linear in $y^*$.
	
	We define the tensor product of two elements $x,y\in\HC$ as the rank one operator $x\otimes y:z\mapsto\left\langle z,y\right\rangle_\HC x$. The tensor product is anti linear in its second argument. The projective tensor product $\HC\hat \otimes\HC$, is the closed linear span of $\left\{x\otimes y\right\}_{x,y\in\HC}$, with respect to the norm
	\[
	\left\|T\right\|_{\wedge}=\inf  \left\{\sum_k \left\|x_k\right\|_\HC \left\|y_k\right\|_\HC;T=\sum_k x_k\otimes y_k \right\}.
	\]
	The space $\HC\hat \otimes\HC$ can be isometrically identified with $\SC^1$. The dual of $\SC^1$ is isometrically identified with $\LC$ via the pairing
	\[
	\left\langle T,B\right\rangle_{\SC^1}=\tr \left(B^*T\right) =\sum_{n}\left\langle Te_n,Be_n\right\rangle_\HC=\sum_{k}\left\langle x_k,By_k\right\rangle_\HC,
	\]
	where $B\in\LC$, $\left(e_n\right)_{n=0}^\infty$ is any orthonormal basis of $\HC$, and $\sum_k x_k\otimes y_k$ is any representation of $T$, cf. Wojtaszczyk \cite{Wojtaszczyk1991:BSpacesForAnalysts}*{III.B.26}.
	
	An important property of Hardy spaces $H^p\left(\XC\right)$ is that, given certain properties of $\XC$, $H^p\left(\XC\right)$ may be isometrically identified as a  subspace of $L^p\left(\T,\XC\right)$. The precise statement is as follows:
	\begin{proposition}\label{Proposition:BoundaryIdentification}
		Let $p\in[1,\infty]$, and $f\in H^p\left(\XC\right)$. 
		\begin{itemize}
			\item[$\left(i\right)$] If $\XC\in\left\{\C,\HC,\SC^1\right\}$, then there exists a function $bf\in L^p\left(\T,\XC\right)$ such that for $m$-a.e. $\zeta\in\T$, $\lim_{r\to 0}f_r\left(\zeta\right)=bf\left(\zeta\right)$ in the norm topology on $\XC$. Moreover, $f_r\to bf$ in $L^p\left(\T,\XC\right)$, and
			\[
			\int_{\T}\left(bf\right)\left(\zeta\right)\conj{ \zeta}^ndm\left(\zeta\right)=
			\left\{
			\begin{array}{cl}
			\hat f\left(n\right) & \textnormal{for }n\in\N_0,
			\\
			0 & \textnormal{for }n\notin\N_0,
			\end{array}
			\right.
			\]
			
			\item[$\left(ii\right)$] If $\XC=\LC$, then there exists a function $bf\in L^p\left(\T,\XC\right)$ such that for $m$-a.e. $\zeta\in\T$, $\lim_{r\to 0}f_r\left(\zeta\right)=bf\left(\zeta\right)$ in the strong operator topology. Moreover, $\left\|bf\right\|_{L^p\left(\T,\XC\right)}=\left\|f\right\|_{H^p\left(\XC\right)}$, and all $x,y\in\HC$
			\[
			\int_{\T}\left\langle \left(bf\right)\left(\zeta\right)x,y\right\rangle_\HC\conj{ \zeta}^ndm\left(\zeta\right)=
			\left\{
			\begin{array}{cl}
			\langle \hat f\left(n\right)x,y\rangle_\HC & \textnormal{for }n\in\N_0,
			\\
			0 & \textnormal{for }n\notin\N_0,
			\end{array}
			\right.
			\]
		\end{itemize}
		In particular, we may identify the Taylor coefficients of $f$ with the Fourier coefficients of $bf$.
	\end{proposition}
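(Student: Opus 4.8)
The plan is to separate the Taylor/Fourier coefficient identity from the harder analytic statement about existence of boundary values, and to treat the spaces $\{\C,\HC,\SC^1\}$, which all enjoy the Radon--Nikodym property (RNP), separately from $\LC$, which does not. Once one knows that $f_r\to bf$ in $L^p(\T,\XC)$ (resp.\ in the appropriate weak sense when $\XC=\LC$), the coefficient identities follow immediately: integrating $f_r(\zeta)\conj{\zeta}^n$ against $dm$ gives $r^n\hat f(n)$ for $n\in\N_0$ and $0$ otherwise, by orthogonality of the monomials, and passing to the limit under the $L^1$-control furnished by $L^p$-convergence yields the stated formulas. So the real content is the existence, integrability, and convergence of $bf$.

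For $\XC\in\{\C,\HC,\SC^1\}$ I would invoke the RNP: $\C$ is trivial, $\HC$ is reflexive, and $\SC^1$ is a separable dual (the dual of the compact operators), so all three have the RNP (see Diestel--Uhl). For Hardy spaces of functions valued in an RNP space it is classical that every $f\in H^p(\XC)$, $1\le p\le\infty$, admits radial limits $f_r(\zeta)\to bf(\zeta)$ in $\XC$-norm for $m$-a.e.\ $\zeta$, that $bf\in L^p(\T,\XC)$, and that $f$ is the Poisson integral of $bf$; for $1<p<\infty$ one argues via weak compactness of $\{f_r\}$ in $L^p(\T,\XC)$ together with the vector-valued maximal/martingale machinery, while the endpoints $p\in\{1,\infty\}$ use the RNP more essentially. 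I would cite this boundary-value theorem from the vector-valued Hardy space literature rather than reprove it. The $L^p$-convergence $f_r\to bf$ then follows from a.e.\ convergence plus uniform integrability (automatic for $p>1$, and part of the boundary-value theorem for $p=1$), and part $(i)$ drops out as above.

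The case $\XC=\LC$ is the main obstacle, precisely because $\LC$ fails the RNP, so no boundary-value theorem is directly available and one cannot expect norm convergence of $f_r$. Here I would \emph{slice}: for each fixed $x\in\HC$ the function $z\mapsto f(z)x$ lies in $H^p(\HC)$ with $\|f(\cdot)x\|_{H^p(\HC)}\le\|x\|_\HC\|f\|_{H^p(\LC)}$, so part $(i)$ applied to the RNP space $\HC$ produces a boundary function and a full-measure set on which $f_r(\zeta)x$ converges in $\HC$-norm. Fixing a countable dense set $\{x_j\}\subset\HC$ (here separability of $\HC$ is used) and intersecting the corresponding full-measure sets, I obtain one set $E$ with $m(E)=1$ on which $f_r(\zeta)x_j$ converges for every $j$; linearity of the operators $f_r(\zeta)$ together with density then defines a limit operator $bf(\zeta)$ on $\HC$ for $\zeta\in E$, and SOT-convergence $f_r(\zeta)\to bf(\zeta)$ follows. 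The delicate points are the boundedness of $bf(\zeta)$ and the norm identity. By lower semicontinuity of the operator norm under SOT-limits together with Fatou's lemma one gets $\|bf\|_{L^p(\T,\LC)}\le\|f\|_{H^p(\LC)}$, in particular $bf(\zeta)\in\LC$ for a.e.\ $\zeta$; for the reverse inequality I would verify slice-wise that $f$ is the Poisson integral of $bf$, whence $\|f(z)\|_\LC\le\int_\T P_z\,\|bf\|_\LC\,dm$ (taking the supremum over unit vectors $x,y$ inside the scalar Poisson formula) and, by Jensen's inequality, $\sup_r\int_\T\|f_r\|_\LC^p\,dm\le\|bf\|_{L^p(\T,\LC)}^p$. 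Combining the two bounds gives $\|bf\|_{L^p(\T,\LC)}=\|f\|_{H^p(\LC)}$. The weak coefficient identity of part $(ii)$ is then obtained by testing the established $\HC$-valued convergence of $f_r(\cdot)x$ against $y\in\HC$, or equivalently by applying part $(i)$ to the scalar functions $\langle f(\cdot)x,y\rangle_\HC\in H^p$.
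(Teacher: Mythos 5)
The paper itself offers no proof of this proposition: it is dispatched entirely by citation (Garnett for the scalar case, Nikolski for $\XC=\HC$, Rosenblum--Rovnyak for $\XC=\LC$, and the analytic Radon--Nikodym property of $\SC^1$ via Bukhvalov--Danilevich and Haagerup--Pisier). Your handling of part $(i)$ is consistent with that: the observation that $\C$, $\HC$, and $\SC^1$ all enjoy the RNP (the last as a separable dual space) is correct, and in fact slightly stronger than what the paper invokes, since the RNP implies the analytic RNP, which is the property that actually characterizes a.e.\ existence of radial limits for $H^p(\XC)$. Your slicing argument for $\XC=\LC$ is also the standard route, essentially the one in Rosenblum--Rovnyak, so in outline you are sound.

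There are, however, two genuine soft spots in your write-up. First, in part $(ii)$ with $p<\infty$, the step ``linearity of the operators $f_r(\zeta)$ together with density then defines a limit operator'' does not go through as stated: convergence of $f_r(\zeta)x_j$ on a countable dense set extends to all $x\in\HC$ only if the family $\left\{f_r(\zeta)\right\}_{0<r<1}$ is bounded in $\LC$, and for $p<\infty$ this is not automatic pointwise. You must first show that $\sup_{0<r<1}\left\|f_r(\zeta)\right\|_\LC<\infty$ for $m$-a.e.\ $\zeta$; this follows because $z\mapsto\left\|f(z)\right\|_\LC^p$ is subharmonic (a supremum of the subharmonic functions $\left|\left\langle f(z)x,y\right\rangle_\HC\right|^p$ over unit vectors) with uniformly bounded circular means, hence has a least harmonic majorant, the Poisson integral of a finite positive measure, whose radial maximal function is finite a.e. Without this, your limit operator $bf(\zeta)$ could fail to exist or fail to be bounded, and the subsequent lower-semicontinuity and Fatou steps have nothing to act on. Second, your claim that uniform integrability is ``automatic for $p>1$'' is wrong as stated: boundedness in $L^p$ yields uniform integrability of the $q$-th powers only for $q<p$, not of $\left\|f_r\right\|^p$ itself. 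The standard repair is either to dominate by the nontangential maximal function, which lies in $L^p(\T)$ for $f\in H^p$ with $1\le p<\infty$, or to use the Poisson representation $f_r=P_r* bf$ together with $L^p$-continuity of the Poisson means. Finally, the $L^p$-convergence assertion must in any case exclude $p=\infty$ (or be read in the weak-* sense), since uniform convergence of the continuous functions $f_r$ would force $bf$ to agree a.e.\ with a continuous function; this imprecision you inherited from the statement rather than introduced.
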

	In the scalar case, the above result is proved in any serious introduction to Hardy spaces. We mention \cite{Garnett2007:BddAnalFcnsBook}. We refer to \cite{Nikolski2002:EasyReading} for the case $\XC=\HC$, and	 \cite{Rosenblum-Rovnyak1985:HardyClassesOpTheory} for the case $\XC=\LC$. The statement for $\XC=\SC^1$ holds because $\SC^1$ has the so-called analytic Radon--Nikodym property, see \citelist{\cite{Bukhvalov-Danilevich1982:BdryPropsAnalHarmFcnsValBSpace}\cite{Haagerup-Pisier1989:FactAnalFcnsNon-CommL1Spaces}}.
		
	We define the formal duality pairing between $f\in\Hol\left(\YC\right)$ and $g\in\Hol\left(\YC^*\right)$ as
	\[
	\left\langle f,g\right\rangle= \sum_{n=0}^\infty \left\langle \hat f\left(n\right), \hat g\left(n\right)\right\rangle_{\YC}.
	\]
	The pairing is well defined if $f\in\OC\left(\YC\right)$ or $g\in\OC\left(\YC^*\right)$, and generalizes the inner product on $H^2\left(\HC\right)$. Note that $\left\langle D^\alpha f,g\right\rangle = \left\langle f, D^\alpha g\right\rangle$, and, in the case where $\YC=\HC$, $\left\langle f,\Gamma_\phi g\right\rangle=\left\langle f\otimes \conjvar{g},\phi \right\rangle$. 
		
	We will make use of two related notions of weighted Bergman spaces. For $\beta>-1$, we define two finite measures on $\D$:
	\[
	dA_{\beta}\left(z\right)=\frac{1+\beta}{\pi}  \left(1-\left|z\right|^2 \right)^{\beta}dA\left(z\right)\quad\textnormal{and}\quad dA_{\beta,\log}\left(z\right)=\frac{1+\beta}{\pi}  \left(\log  \left(\frac{1}{|z|^2} \right) \right)^{\beta}dA\left(z\right).
	\]
	Here $dA$ denotes area measure on $\C$. For $p\in\left[1,\infty\right)$, we denote by $L_{\beta}^p\left(\YC\right)$ the space of strongly measurable functions $f:\D\to\YC$ such that
	\[
	\left\|f\right\|_{L_{\beta}^p\left(\YC\right)}^p=\int_\D\left\|f\left(z\right)\right\|_\YC^p\, dA_{\beta}\left(z\right)<\infty.
	\]
	We then define the standard weighted Bergman space $A_{\beta}^p\left(\YC\right)=L_{\beta}^p\left(\YC\right)\cap \Hol\left(\YC\right)$.	We similarly define the logarithmically weighted spaces $L_{\beta,\log}^p\left(\YC\right)$ and $A_{\beta,\log}^p\left(\YC\right)$, with $dA_{\beta,\log}$ in place of $dA_{\beta}$. An enlightening reference for standard weighted Bergman spaces with $\YC=\C$ is \cite{Hedenmalm-Korenblum-Zhu2000:BergmanSpacesBook}. We remark that many of the results presented below for $\YC$-valued functions follow by the same proofs as in the scalar case.
	
	The above two notions of Bergman spaces are to a large extent interchangeable:
	\begin{proposition}\label{Proposition:InterchangeableWeights}
		Let $p\in\left[1,\infty\right)$, $\beta>-1$, and $\YC$ be an arbitrary Banach space. We then have that
		\[
		\left\|f\right\|_{A_{\beta,\log}^p\left(\YC\right)} \approx \left\|f\right\|_{A_{\beta}^p\left(\YC\right)},\quad f\in\Hol\left(\YC\right).
		\]
		The corresponding bounds depend on $p$ and $\alpha$.
	\end{proposition}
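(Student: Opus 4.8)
The plan is to reduce the claimed two-sided estimate to a one-dimensional comparison of radial integral means, and then to compare the two radial weights not pointwise (which fails) but through their tail integrals, exploiting monotonicity of the means.

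First I would use that both measures $dA_\beta$ and $dA_{\beta,\log}$ are rotation-invariant. Writing $M_p(r,f)^p=\int_\T\left\|f(r\zeta)\right\|_\YC^p\,dm(\zeta)$ and passing to polar coordinates, both norms collapse to weighted radial integrals
\[
\left\|f\right\|_{A_\beta^p(\YC)}^p=2(1+\beta)\int_0^1 M_p(r,f)^p\,w_1(r)\,r\,dr,\qquad w_1(r)=(1-r^2)^\beta,
\]
and the identical expression with $w_2(r)=\left(\log\tfrac{1}{r^2}\right)^\beta$ in place of $w_1$. The normalizing constants coincide and thus play no role. The crucial point is that $w_1$ and $w_2$ are \emph{not} pointwise comparable near the origin: for $\beta\neq 0$ the weight $w_2$ either blows up or vanishes as $r\to 0$, while $w_1\to 1$. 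So a naive pointwise estimate is doomed, and the holomorphy of $f$ must be used.

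The feature that saves the argument is that $g(r):=M_p(r,f)^p$ is continuous and non-decreasing in $r$. Indeed, for $\YC$-valued holomorphic $f$ each scalar function $\langle f,y^*\rangle$ is holomorphic, so $\left\|f\right\|_\YC=\sup_{\left\|y^*\right\|\le 1}\left|\langle f,y^*\rangle\right|$ is subharmonic, whence $\left\|f\right\|_\YC^p$ is subharmonic for $p\ge 1$ and its circular means increase with $r$. Writing $g(r)=g(0)+\int_{(0,r]}dg$ with $dg\ge 0$ and applying Tonelli, I would rewrite each weighted integral as
\[
\int_0^1 g(r)\,w(r)\,r\,dr=g(0)\,W(0)+\int_{(0,1)}W(t)\,dg(t),\qquad W(t):=\int_t^1 w(r)\,r\,dr.
\]
Since $g(0)\ge 0$ and $dg\ge 0$, the whole comparison now reduces to the single uniform tail estimate $W_1(t)\approx W_2(t)$ for $t\in[0,1)$.

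I would establish this tail comparison by splitting at $r=\tfrac12$. On $[\tfrac12,1]$ one has $\log\tfrac{1}{r^2}=-2\log r\approx 1-r^2$ with absolute constants, so $w_1\approx w_2$ pointwise there, giving $W_1(t)\approx W_2(t)$ for $t\in[\tfrac12,1)$. For $t\in[0,\tfrac12]$, each $W_i(t)$ is trapped between $W_i(\tfrac12)>0$ and $W_i(0)<\infty$; here the finiteness of $W_2(0)=\int_0^1\left(\log\tfrac{1}{r^2}\right)^\beta r\,dr$ holds for every $\beta>-1$ because the factor $r\,dr$ absorbs the logarithmic singularity at the origin. Thus on $[0,\tfrac12]$ both tails are comparable to positive constants, hence to each other. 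Combining the two ranges yields $W_1\approx W_2$ on $[0,1)$ with constant depending only on $\beta$, and taking $p$-th roots finishes the proof. The main obstacle is precisely the failure of pointwise weight comparison near $z=0$; recognizing that monotonicity of the integral means is exactly the mechanism that converts the tail comparison into the full integral comparison is the conceptual heart of the argument.
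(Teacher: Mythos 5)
Your proof is correct. It rests on the same two pillars as the paper's own (sketched) argument --- the comparability $\log\left(1/r^2\right)\approx 1-r^2$ for $r\ge 1/2$, together with subharmonicity of $z\mapsto\left\|f\left(z\right)\right\|_{\YC}^p$ --- but it executes them differently. The paper, deferring to an easy modification of Garnett's Lemma VI.3.2, gets one inequality from the pointwise estimate $1-\left|z\right|^2\le\log\left(1/\left|z\right|^2\right)$ and the other from a local sub-mean-value estimate that dominates the integral over $\left\{\left|z\right|<1/2\right\}$, where the weights fail to be comparable, by an integral over an annulus where they are. You instead convert subharmonicity into monotonicity of the circular means $g(r)=M_p(r,f)^p$, and then a Stieltjes decomposition of $g$ plus Tonelli reduces the entire two-sided comparison to the single tail estimate $W_1(t)\approx W_2(t)$, verified by splitting at $t=1/2$; the needed facts ($W_2(0)<\infty$ for all $\beta>-1$, $W_i(1/2)>0$, monotonicity of $W_i$) are all checked correctly. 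This repackaging buys you something genuine: both directions come out of one computation, with a constant depending only on $\beta$ and not on $p$ (slightly sharper than the statement, whose ``$\alpha$'' is evidently a typo for $\beta$), and the cases $\beta>0$ and $-1<\beta<0$ are handled uniformly, whereas the direction furnished by the pointwise estimate flips with the sign of $\beta$. In a polished write-up you should record explicitly that $g$ is continuous and nondecreasing on $[0,1)$, so the identity $g(r)=g(0)+\int_{(0,r]}dg$ for the Lebesgue--Stieltjes measure $dg$ is legitimate; your justification of subharmonicity (writing $\left\|f\right\|_{\YC}$ as a continuous supremum of moduli of the holomorphic scalar functions $\left\langle f,y^*\right\rangle$ over the unit ball of $\YC^*$, then composing with the convex increasing map $t\mapsto t^p$) is exactly right and is what makes the argument work for an arbitrary Banach space $\YC$.
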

	One of the above bounds is obtained using the pointwise estimate
	\[
	1-\left|z\right|^2\le \log  \left(\frac{1}{\left|z\right|^2} \right),\quad z\in \D,
	\]
	and the other by using subharmonicity of the function $z\mapsto \left\|f\left(z\right)\right\|_\YC^p$. We refer the interested reader to the easily modified proof of \cite{Garnett2007:BddAnalFcnsBook}*{Lemma VI.3.2} for details. 
	
	A multiplier is an operator $\lambda:\Hol\left(\YC\right)\ni f\mapsto \lambda f\in\Hol\left(\YC\right)$ given by
	\[
	\lambda f\left(z\right)=\sum_{n=0}^\infty \lambda_n\hat f\left(n\right) z^n,\quad z\in\D,
	\]
	for some scalar sequence $\left(\lambda_n\right)_{n=0}^\infty$. With some abuse of the terminology in \cite{Buckley-Koskela-Vukotic1999:FracIntDiffBergmanSpaces}, we say that a multiplier is small if $\left|\lambda_n\right|\lesssim\frac{1}{1+n}$. Using ideas from the proof of \cite{Arregui-Blasco2002:MultplrsVecValBergmanSpaces}*{Theorem 3.2}, one can prove the following result, which we refer to as the small multiplier property for Bergman spaces.
	\begin{proposition}\label{Proposition:SmallMultipliers}
		Let $p\in\left[1,\infty\right)$, $\beta>-1$, and $\YC$ be an arbitrary Banach space. Then small multipliers act boundedly on $A_{\beta}^p\left(\YC\right)$.
	\end{proposition}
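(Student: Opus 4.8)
The plan is to realize a small multiplier as a genuine integral operator, assembled from a radial dilation fused with an angular convolution, and then to estimate it using only Minkowski's integral inequality together with the subharmonicity of $z\mapsto\left\|f(z)\right\|_\YC^p$; these are the only tools robust enough to survive for an arbitrary Banach space $\YC$ and every $p\in[1,\infty)$, where no square-function or Schur-multiplier machinery is available. Writing $\mu_n=(1+n)\lambda_n$, the smallness hypothesis says precisely that $C:=\sup_n\left|\mu_n\right|<\infty$. I would set $K(w)=\sum_n\mu_n w^n$, which is analytic on $\D$ since its coefficients are bounded, and abbreviate $K_t(\zeta)=K(t\zeta)$. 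The starting point is the elementary identity $\frac{1}{1+n}=\int_0^1 t^{2n}\,2t\,dt$, which after matching Taylor coefficients yields, for $f\in\OC\left(\YC\right)$, the representation
\[
\lambda f\left(z\right)=\int_0^1\left(\int_\T K\left(t\zeta\right)f\left(t\conj{\zeta}z\right)\,dm\left(\zeta\right)\right)2t\,dt .
\]
For $f\in\OC\left(\YC\right)$ the double integral converges absolutely, since the Taylor coefficients of $f$ decay geometrically while $\left|\mu_n\right|\le C$ and $t\le 1$; this justifies the coefficient-by-coefficient verification of the identity (orthonormality of $\left\{\zeta^n\right\}$ in $L^2(\T)$ collapses the angular integral to the diagonal, and the radial integral produces the factor $\frac{1}{1+n}$).

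Next I would apply Minkowski's integral inequality twice, first in the radial variable $t$ and then in the angular variable $\zeta$, obtaining
\[
\left\|\lambda f\right\|_{A_\beta^p\left(\YC\right)}\le\int_0^1\int_\T\left|K\left(t\zeta\right)\right|\,\bigl\|f\left(t\conj{\zeta}\,\cdot\right)\bigr\|_{A_\beta^p\left(\YC\right)}\,dm\left(\zeta\right)\,2t\,dt .
\]
For fixed $t\le 1$ and $\left|\zeta\right|=1$, the map $z\mapsto f(t\conj{\zeta}z)$ is a rotation followed by a dilation by the factor $t$. Rotations are isometries of $A_\beta^p\left(\YC\right)$, while dilations by $t\le 1$ are contractions: writing the norm in polar coordinates, this reduces to the fact that $\rho\mapsto\int_\T\left\|f(\rho\zeta)\right\|_\YC^p\,dm(\zeta)$ is nondecreasing, which is a consequence of the subharmonicity of $z\mapsto\left\|f(z)\right\|_\YC^p$ (valid for any Banach space $\YC$ and any $p\ge 1$). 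Hence $\bigl\|f(t\conj{\zeta}\,\cdot)\bigr\|_{A_\beta^p\left(\YC\right)}\le\left\|f\right\|_{A_\beta^p\left(\YC\right)}$, and the estimate collapses to
\[
\left\|\lambda f\right\|_{A_\beta^p\left(\YC\right)}\le\left\|f\right\|_{A_\beta^p\left(\YC\right)}\int_0^1\left\|K_t\right\|_{L^1\left(\T\right)}\,2t\,dt .
\]

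It then remains to show $\int_0^1\left\|K_t\right\|_{L^1(\T)}\,2t\,dt<\infty$. By Cauchy--Schwarz on $\T$ (using $m(\T)=1$) and Parseval, $\left\|K_t\right\|_{L^1(\T)}\le\left\|K_t\right\|_{L^2(\T)}=\bigl(\sum_n\left|\mu_n\right|^2 t^{2n}\bigr)^{1/2}\le C\,(1-t^2)^{-1/2}$, and $\int_0^1(1-t^2)^{-1/2}\,2t\,dt=2$. Thus $\left\|\lambda f\right\|_{A_\beta^p\left(\YC\right)}\le 2C\,\left\|f\right\|_{A_\beta^p\left(\YC\right)}$ for all $f\in\OC\left(\YC\right)$; since $\OC\left(\YC\right)$ is dense in $A_\beta^p\left(\YC\right)$, the multiplier extends boundedly, with operator norm at most $2\sup_n(1+n)\left|\lambda_n\right|$ — a bound remarkably independent of $p$, $\beta$, and $\YC$.

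I expect the genuine obstacle to be finding the correct representation, namely tying the convolution radius to the dilation radius. A naive Hadamard representation against a fixed disc kernel $G$ with $G\in A_\gamma^1(\C)$ forces Taylor coefficients of size $\sim(1+n)^\gamma$ on $G$ and produces a weighted $L^1(\D)$ norm that diverges logarithmically — exactly at the extreme multiplier $\lambda_n=\frac{1}{1+n}$ — so that approach fails by a logarithm no matter how $\gamma$ is chosen, and the logarithmic-weight equivalence of Proposition \ref{Proposition:InterchangeableWeights} offers no rescue. The matched-radius representation above instead yields a kernel whose $L^1(\T)$-growth is only the borderline-integrable $(1-t^2)^{-1/2}$; verifying that this borderline is reached exactly under $\left|\lambda_n\right|\lesssim\frac{1}{1+n}$, and checking the two contractivity claims for dilations, are the points that require care.
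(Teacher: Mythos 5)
Your proof is correct, and it is a complete, self-contained version of precisely the argument the paper leaves to the literature: the paper does not prove Proposition \ref{Proposition:SmallMultipliers} at all, but only remarks that it follows ``using ideas from the proof of'' Arregui--Blasco's Theorem 3.2 on multipliers of vector-valued Bergman spaces. Your matched-radius identity $\int_\T K(t\zeta)f(t\conj{\zeta}z)\,dm(\zeta)=\sum_n \mu_n t^{2n}\hat f(n)z^n$ is exactly the Hadamard-product-as-circular-convolution device that drives that circle of ideas, and your additional twist --- generating the factor $\frac{1}{1+n}$ from $\int_0^1 t^{2n}\,2t\,dt$ so that the kernel $K$ need only have \emph{bounded} coefficients, then estimating $\|K_t\|_{L^1(\T)}\le\|K_t\|_{L^2(\T)}\le C(1-t^2)^{-1/2}$, which is integrable against $2t\,dt$ --- buys the clean explicit bound $2\sup_n(1+n)\left|\lambda_n\right|$, uniform in $p$, $\beta$, and $\YC$. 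The structural steps all check out: rotations are isometries of $A_\beta^p\left(\YC\right)$ because $dA_\beta$ is rotation invariant; dilations are contractions because $z\mapsto\|f(z)\|_\YC^p$ is subharmonic for every Banach space $\YC$ and $p\ge 1$ (it is a convex increasing function of the subharmonic $\|f\|_\YC$), so circle means increase with the radius; and the integral Minkowski inequality is valid for Bochner integrals when $p\ge 1$.

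One presentational caveat, not a gap: your phrase ``the double integral converges absolutely \ldots this justifies the coefficient-by-coefficient verification'' is slightly too quick, since the fully expanded termwise series is \emph{not} jointly absolutely summable --- after the $t$-integration one faces $\sum_{n,m}\left|\mu_n\right|\,\|\hat f(m)\|_\YC\,|z|^m\frac{2}{2+n+m}$, which diverges harmonically in $n$ for each fixed $m$. The interchange should instead be done in two stages: for fixed $t<1$ both series converge absolutely and uniformly on $\T$, so the angular integral collapses to $\sum_n\mu_n t^{2n}\hat f(n)z^n$; then the radial integration is justified by dominated convergence, the partial sums being dominated uniformly in $t$ by $C\sum_m\|\hat f(m)\|_\YC|z|^m<\infty$ for $f\in\OC\left(\YC\right)$. (The absolute convergence of the double \emph{integral} itself does hold, by the very estimate $\int_0^1\|K_t\|_{L^1(\T)}\,2t\,dt\le 2C$ you derive later.) Likewise, in the density step one should note that $\lambda f_r=\left(\lambda f\right)_r$ and that $\|\left(\lambda f\right)_r\|_{A_\beta^p\left(\YC\right)}$ increases to $\|\lambda f\|_{A_\beta^p\left(\YC\right)}$ as $r\uparrow 1$ (again by subharmonic monotonicity and monotone convergence), so the bounded extension really is the coefficient multiplier; both repairs are routine and your constant $2\sup_n(1+n)\left|\lambda_n\right|$ survives intact.
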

		
	The spaces $A_{\beta,\log}^2\left(\HC\right)$ and $A_{\beta}^2\left(\HC\right)$ are closed subspaces of $L_{\beta,\log}^2\left(\HC\right)$ and $L_{\beta}^2\left(\HC\right)$ respectively. The corresponding orthogonal projections are denoted by $P_{\beta,\log}$ and $P_{\beta}$. A calculation shows that if $\phi \in\Hol\left(\LC\right)$ and $f\in\Hol\left(\HC\right)$ are sufficiently regular, then
	\begin{equation}\label{Eq:BergmanHankelFormulalog}
	P_{\beta,\log}\left(\phi \conjvar{f} \right)\left(z\right)
	=
	\sum_{n=0}^\infty   \left(\sum_{m=0}^\infty   \left(\frac{1+n}{1+m+n} \right)^{1+\beta} \hat \phi \left(m+n\right)\hat f\left(m\right) \right)z^n,\quad z\in\D.
	\end{equation}
	and
	\begin{equation}\label{Eq:BergmanHankelFormula}
	P_{\beta}\left(\phi \conjvar{f} \right)\left(z\right)
	=
	\sum_{n=0}^\infty   \left(\sum_{m=0}^\infty \frac{\Gamma\left(1+m+n\right)\Gamma\left(2+\beta+n\right)}{\Gamma\left(2+\beta+m+n\right)\Gamma\left(1+n\right)} \hat \phi \left(m+n\right)\hat f\left(m\right) \right)z^n,\quad z\in\D.
	\end{equation}
	Here $\Gamma:\C\setminus \left\{-1,-2,\ldots\right\}\to\C$ is the standard $\Gamma$-function. By \eqref{Eq:BergmanHankelFormulalog} and \eqref{Eq:BergmanHankelFormula} we are allowed to define $P_{\beta,\log}\left(\phi \conjvar{f} \right)$ and $P_{\beta}\left(\phi \conjvar{f} \right)$ as elements of $\Hol\left(\HC\right)$, whenever $\phi\in\Hol\left(\LC\right)$ and $f\in\OC\left(\HC\right)$. In this sense, they are analogues of \eqref{Eq:HardyHankelFormula}. 
	
	Using Parseval's identity we obtain
	\[
	\left\|f\right\|_{A_{\beta,\log}^2\left(\HC\right)}^2=\Gamma\left(2+\beta\right)\sum_{k=0}^\infty \frac{\|\hat f\left(k\right)\|_\HC^2}{\left(1+k\right)^{1+\beta}},
	\]
	and
	\[
	\left\|f\right\|_{A_{\beta}^2\left(\HC\right)}^2=\sum_{k=0}^\infty \binom{n+1+\beta}{n}^{-1}\|\hat f\left(k\right)\|_\HC^2,
	\]
	where $\binom{n+1+\alpha}{n}=\frac{\Gamma\left(2+\beta+n\right)}{\Gamma\left(2+\beta\right)\Gamma\left(1+n\right)}$ are generalized binomial coefficients.
		
	The Bloch space $\BC\left(\YC\right)$ is the space of functions $f\in\Hol\left(\YC\right)$ such that
	\[
	\left\|f\right\|_{\BC\left(\YC\right)}=\sup_{z\in\D}\, \left(1-\left|z\right|^2\right)\left\|Df\left(z\right)\right\|_\YC<\infty.
	\]
	In the literature the Bloch space is typically defined by finiteness of the quantity 
	\[
	\left\|f\left(0\right)\right\|_\YC+\sup_{z\in\D}\, \left(1-\left|z\right|^2\right)\left\|f'\left(z\right)\right\|_\YC.
	\]
	We leave it as an exercise to show that these definitions are equivalent.
	
	The Bloch space has the simple property that
	\begin{equation}\label{Eq:BlochWBloch}
	\left\|f\right\|_{\BC\left(\YC^*\right)}=\sup_{\substack{y\in\YC\\\left\|y\right\|_\YC=1}}\left\|\left\langle y,f\right\rangle_\YC\right\|_\BC,
	\end{equation}
	as can be seen by interchanging the order of suprema.
	
	The importance of the Bloch space is that $\BC\left(\YC^*\right)$ is isometric to $A_{\beta}^1\left(\YC\right)^*$ via the pairing
	\[
	\left\langle f,g\right\rangle_{A_{\beta}^1\left(\YC\right)} =\lim_{r\uparrow 1}\int_{\D}\left\langle f\left(rz\right),g\left(z\right)\right\rangle_{\YC}\, dA_{\beta}\left(z\right),\quad f\in A_{\beta}^1\left(\YC\right), g\in\BC\left(\YC^*\right).
	\]
	This follows mostly as in \cite{Hedenmalm-Korenblum-Zhu2000:BergmanSpacesBook}. The major difference is that $\BC\left(\YC^*\right)$ is the Bergman projection of a certain class of measures, rather than $L^\infty\left(\D,\YC^*\right)$, see \cite{Arregui-Blasco2003:BergmanBlochSpacesVecVal}.
	
	\section{Proof of Theorem \ref{Theorem:HankelCarleson}}\label{Sec:MainProof}
	Given $\alpha>0$, let $\beta>\max\left\{2,1+\alpha\right\}$ be an auxiliary parameter. To prove Theorem \ref{Theorem:HankelCarleson}, let $\phi\in\Hol\left(\LC\right)$ and define
	\begin{align*}
	\left\|\phi\right\|_{1,\alpha}&=\sup_{f\in \OC_1\left(\HC\right)} \left\|D^\alpha\Gamma_\phi f\right\|_{H^2\left(\HC\right)},
	\\
	\left\|\phi\right\|_{2,\alpha}&=\sup_{f\in \OC_1\left(\HC\right)} \left\|P_{2\beta-1,\log}  \left(\left(D^{\beta+\alpha}\phi\right)\conjvar{ f}  \right)\right\|_{A_{2\beta-1,\log}^2\left(\HC\right)},
	\\
	\left\|\phi\right\|_{3,\alpha}&=\sup_{f\in \OC_1\left(\HC\right)} \left\|P_{1,\log}  \left(\left(D^{1+\alpha}\phi \right)\conjvar{ f} \right)\right\|_{A_{1,\log}^2\left(\HC\right)},
	\\
	\left\|\phi\right\|_{4,\alpha}&=\sup_{f\in \OC_1\left(\HC\right)} \left\|P_{1}  \left(\left(D^{1+\alpha}\phi \right)\conjvar{ f} \right)\right\|_{A_{1,\log}^2\left(\HC\right)},
	\\
	\left\|\phi\right\|_{5,\alpha}&=\sup_{f\in \OC_1\left(\HC\right)} \left\|P_{1}  \left(\left(D^{1+\alpha}\phi \right)\conjvar{ f} \right)\right\|_{A_{1}^2\left(\HC\right)},
	\\
	\left\|\phi\right\|_{6,\alpha}&=\sup_{f\in \OC_1\left(\HC\right)} \left\|\left(D^{1+\alpha} \phi\right)\conjvar{ f}\right\|_{L_{1}^2\left(\HC\right)}.
	\end{align*}
	Theorem \ref{Theorem:HankelCarleson} is the statement that $\left\|\phi \right\|_{1,\alpha}\approx \left\|\phi \right\|_{6,\alpha}$. We will prove that the quantities $\left\|\phi \right\|_{k,\alpha}$, $1\le k\le 6$ are pairwise comparable.
	
	The outline of the proof is as follows. We show in detail that $\left\|\phi \right\|_{1,\alpha}\lesssim \left\|\phi \right\|_{2,\alpha}$. The reverse estimate, as well as the estimates $\left\|\phi \right\|_{2,\alpha}\approx \left\|\phi \right\|_{3,\alpha}$, and $\left\|\phi \right\|_{3,\alpha}\approx \left\|\phi \right\|_{4,\alpha}$ are similar, although the last one is substantially simpler than the preceding ones. The statement that $\left\|\phi \right\|_{4,\alpha}\approx \left\|\phi \right\|_{5,\alpha}$ is just a special case of Proposition \ref{Proposition:InterchangeableWeights}. Furthermore, it is trivial that $\left\|\phi \right\|_{5,\alpha}\le \left\|\phi \right\|_{6,\alpha}$. The reverse of this last estimate follows in a routine manner from the following remarkable result by Aleman and Perfekt \cite{Aleman-Perfekt2012:HankFrmsEmbThmsDirichletSpaces}:
	\begin{lemma}\label{Lemma:AlemanPerfekt}
		There exists a constant $C>0$ such that whenever $\psi\in\Hol\left(\LC\right)$ it holds that
		\begin{align*}
		\sup_{f\in\OC_1\left(\SC^2\right)}\int \left\| \left(D\psi\right) \conjvar{ f}\, \right\|_{\SC^2}^2\, dA_1 \le C \sup_{f,g\in\OC_1\left(\SC^2\right)}  \left|\int \tr  \left(   \left(D\psi \right) \conjvar{ f}   \left( Dg \right)^* \right)\, dA_1 \right|.
		\end{align*}
	\end{lemma}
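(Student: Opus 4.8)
The plan is to recast both sides of the asserted inequality as quantities attached to a single linear map and then to isolate the one estimate that carries the real weight. Fixing the symbol $\psi$, set $Tf=\left(D\psi\right)\conjvar{f}$; since $L_1^2\left(\SC^2\right)$ and $H^2\left(\SC^2\right)$ are Hilbert spaces and $\OC_1\left(\SC^2\right)$ is dense in the unit sphere of the latter, the left-hand side is precisely $\left\|T\right\|^2$, the squared norm of $T$ as a map $H^2\left(\SC^2\right)\to L_1^2\left(\SC^2\right)$. For the right-hand side, note that $\int\tr\left(\left(D\psi\right)\conjvar{f}\left(Dg\right)^*\right)dA_1=\left\langle Tf,Dg\right\rangle_{L_1^2\left(\SC^2\right)}=\left\langle P_1Tf,Dg\right\rangle_{L_1^2\left(\SC^2\right)}$, because $Dg$ is analytic. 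As $D$ is an isomorphism of $H^2\left(\SC^2\right)$ onto $A_1^2\left(\SC^2\right)$, the supremum over $g$ equals, up to comparability, $\left\|P_1Tf\right\|_{A_1^2\left(\SC^2\right)}$, so the right-hand side is comparable to $\left\|P_1T\right\|$. In this language the lemma is the operator-norm comparison $\left\|T\right\|\lesssim\left\|P_1T\right\|$: the full anti-analytic embedding is dominated by its analytic Bergman projection.

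To exploit this I would split $T$ through the orthogonal projection $P_1$ on $L_1^2\left(\SC^2\right)$. Since $P_1$ and $I-P_1$ are complementary orthogonal projections,
\[
T^*T=\left(P_1T\right)^*\left(P_1T\right)+\left(\left(I-P_1\right)T\right)^*\left(\left(I-P_1\right)T\right),
\]
whence $\left\|T\right\|^2\le\left\|P_1T\right\|^2+\left\|\left(I-P_1\right)T\right\|^2$. The entire lemma therefore reduces to the single bound $\left\|\left(I-P_1\right)T\right\|\lesssim\left\|P_1T\right\|$, i.e.\ to controlling the part of $\left(D\psi\right)\conjvar{f}$ orthogonal to $A_1^2\left(\SC^2\right)$ by its analytic projection, uniformly in $f$. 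To get a handle on the two pieces I would expand them by the Bergman--Hankel coefficient formula \eqref{Eq:BergmanHankelFormula}: both $P_1T$ and $\left(I-P_1\right)T$ act on Taylor coefficients as staircase operators whose entries are a symbol coefficient $\hat\psi\left(k\right)$ multiplied by a \emph{scalar} coupling factor formed from ratios of $\Gamma$-values, and the extra factor $1+n$ furnished by $D$ (cf.\ Proposition \ref{Proposition:SmallMultipliers} and Proposition \ref{Proposition:InterchangeableWeights}) provides additional decay in these scalars.

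The hard part is exactly this last inequality. In $P_1T$ the symbol coefficient $\hat\psi\left(k\right)$ is paired with Taylor coefficients $\hat f\left(l\right)$ for $l\le k$, while in $\left(I-P_1\right)T$ it is paired with $\hat f\left(l\right)$ for $l\ge k$; the two staircases thus read off different ranges of the symbol and are genuinely distinct operators. In the scalar case one passes from one to the other by replacing $\psi$ with its conjugate, under which the governing Carleson condition is invariant, but for an $\LC$-valued symbol neither conjugation nor Schur-multiplier manipulations are available, as the discussion around \cite{Davidson-Paulsen1997:PolBddOps}*{Section 4} shows. This is the delicate point resolved by Aleman and Perfekt: the smoothing supplied jointly by $D$ and the weight $dA_1$ forces the orthogonal piece to be absorbed into the analytic Hankel form without recourse to such scalar tricks. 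I would follow their kernel estimate to establish $\left\|\left(I-P_1\right)T\right\|\lesssim\left\|P_1T\right\|$; the lemma then follows from the displayed splitting, and the resulting comparison $\left\|\phi\right\|_{6,\alpha}\lesssim\left\|\phi\right\|_{5,\alpha}$ closes the remaining link in the chain.
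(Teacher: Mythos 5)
Your functional-analytic setup is correct: with $Tf=\left(D\psi\right)\conjvar{f}$ viewed as a map $H^2\left(\SC^2\right)\to L_1^2\left(\SC^2\right)$, the left-hand side is $\left\|T\right\|^2$, the right-hand side is comparable to $\left\|P_1T\right\|_{H^2\left(\SC^2\right)\to A_1^2\left(\SC^2\right)}$ (via the isomorphism $D:H^2\left(\SC^2\right)\to A_1^2\left(\SC^2\right)$), and the identity $T^*T=\left(P_1T\right)^*\left(P_1T\right)+\left(\left(I-P_1\right)T\right)^*\left(\left(I-P_1\right)T\right)$ makes the lemma equivalent to $\left\|\left(I-P_1\right)T\right\|\lesssim\left\|P_1T\right\|$ uniformly in $\psi$. (In passing: the lemma as printed is inhomogeneous in $\psi$ --- quadratic on the left, linear on the right --- and you silently passed to the homogeneous reading $\left\|T\right\|\lesssim\left\|P_1T\right\|$; that is the reading the application $\left\|\phi\right\|_{6,\alpha}\lesssim\left\|\phi\right\|_{5,\alpha}$ requires, but it deserves a remark rather than an elision.) The genuine gap is that this reduction is a verbatim restatement, not progress: the entire content of the lemma sits in $\left\|\left(I-P_1\right)T\right\|\lesssim\left\|P_1T\right\|$, and at exactly that point your proof stops. ``I would follow their kernel estimate'' is not an argument --- you neither state the estimate nor explain why it holds uniformly for operator-valued $\psi$, which is precisely the regime where scalar tricks break down. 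Note that the paper itself offers no proof of this lemma: it is imported wholesale, with attribution, from Aleman and Perfekt \cite{Aleman-Perfekt2012:HankFrmsEmbThmsDirichletSpaces}. An honest blind treatment would either do the same --- cite that result for the lemma itself --- or actually reproduce their argument in the $\SC^2$-valued setting; your text does neither, and nothing in it substantiates that their proof is even organized as a kernel estimate for $\left(I-P_1\right)T$.

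Moreover, the structural picture you propose for attacking the key inequality is wrong in a way that would derail the promised route. In $L^2\left(\T\right)$ the analytic/anti-analytic frequency splitting is clean, but in $L_1^2$ it is not: $P_1T$ does have the staircase form \eqref{Eq:BergmanHankelFormula}, pairing $\hat\psi\left(k\right)$ with $\hat f\left(l\right)$ for $l\le k$, but $\left(I-P_1\right)T$ is \emph{not} the complementary ``$l\ge k$'' staircase. Already for $k=l=1$ one has $\left(I-P_1\right)\left(z\conj{z}\right)=\left|z\right|^2-\tfrac13\ne 0$, so the orthogonal complement of $A_1^2$ in $L_1^2$ retains contributions from every pair of indices, including $l<k$; the claim that ``the two staircases read off different ranges of the symbol'' fails in the weighted Bergman metric, and with it the coefficient bookkeeping you build on Propositions \ref{Proposition:InterchangeableWeights} and \ref{Proposition:SmallMultipliers}. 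To repair the proposal, either state the lemma as a quotation of \cite{Aleman-Perfekt2012:HankFrmsEmbThmsDirichletSpaces} (as the paper does), or carry out their proof in full for $\SC^2$-valued $f,g$ and $\LC$-valued $\psi$; the reduction to $\left\|\left(I-P_1\right)T\right\|\lesssim\left\|P_1T\right\|$, while correct, buys nothing on its own.
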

	
	To prove that $\left\|\phi \right\|_{1,\alpha}\lesssim \left\|\phi \right\|_{2,\alpha}$ we will need some lemmata. The first result gives us some preliminary control of $\phi $.
	\begin{lemma}\label{Lemma:HankelCarlesonBloch}
		For each $\alpha>0$ it holds that
		\[
		\left\|D^\alpha \phi \right\|_{\BC\left(\LC\right)}\lesssim \left\|\phi \right\|_{k,\alpha},\quad \phi \in\Hol\left(\LC\right),\ 1\le k\le 6.
		\]
	\end{lemma}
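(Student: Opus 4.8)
The plan is to reduce the assertion to a pointwise estimate and then to extract the relevant point value by testing each $\left\|\phi\right\|_{k,\alpha}$ against a reproducing kernel localised at the point in question. By the definitions of the Bloch and operator norms,
\[
\left\|D^\alpha\phi\right\|_{\BC\left(\LC\right)}=\sup_{w\in\D}\ \sup_{\left\|x\right\|_\HC=\left\|y\right\|_\HC=1}\left(1-\left|w\right|^2\right)\left|\left\langle D^{1+\alpha}\phi\left(w\right)x,y\right\rangle_\HC\right|,
\]
so for each $k$ it suffices to bound the scalar quantity on the right by $\left\|\phi\right\|_{k,\alpha}$, uniformly in $w$ and in the unit vectors $x,y$. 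Replacing $\phi$ by its dilates and letting the dilation parameter tend to $1$ at the very end, I may assume $\phi\in\OC\left(\LC\right)$, so that all integrals below converge and the reproducing identities apply. The basic test functions are the normalised Szegő kernels $f_w\left(z\right)=\left(1-\left|w\right|^2\right)^{1/2}\left(1-wz\right)^{-1}x\in\OC_1\left(\HC\right)$, whose reflections $\conjvar{f_w}\left(z\right)=\left(1-\left|w\right|^2\right)^{1/2}\left(1-w\conj z\right)^{-1}x$ concentrate at $z=w$.

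For the Bergman-type quantities $\left\|\phi\right\|_{2,\alpha},\dots,\left\|\phi\right\|_{5,\alpha}$ I would argue by duality. Since each projection is orthogonal, $\left\langle Ph,g\right\rangle=\left\langle h,g\right\rangle$ for analytic $g$, so $\left\|\phi\right\|_{k,\alpha}$ dominates $\left|\left\langle\left(D^{M}\phi\right)\conjvar{f_w},g\right\rangle\right|/\left\|g\right\|$ for every admissible $g$, where $M=1+\alpha$ for $k=3,5$ and $M=\beta+\alpha$ for $k=2$. Choosing $g$ to be the appropriate normalised reproducing kernel—for $k=5$ the function $\left(1-\conj wz\right)^{-2}y$, for $k=3$ the logarithmic analogue $\left(1+\conj wz\right)\left(1-\conj wz\right)^{-2}y$—the factor $\left(1-w\conj z\right)^{-1}$ supplied by $\conjvar{f_w}$ completes $g$ to a full reproducing kernel, and the reproducing formula of the relevant weighted Bergman space yields
\[
\left\|\phi\right\|_{3,\alpha},\ \left\|\phi\right\|_{5,\alpha}\ \gtrsim\ \left(1-\left|w\right|^2\right)\left|\left\langle D^{1+\alpha}\phi\left(w\right)x,y\right\rangle_\HC\right|.
\]
This settles $k=3,5$; then $k=4$ follows from $k=5$ by Proposition~\ref{Proposition:InterchangeableWeights}, and $k=6$ from the trivial bound $\left\|\phi\right\|_{5,\alpha}\le\left\|\phi\right\|_{6,\alpha}$. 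The identical computation for $k=2$, where the weight exponent is $2\beta-1$ and the symbol is already differentiated $\beta+\alpha$ times, reproduces instead $\left(1-\left|w\right|^2\right)^{\beta}\left|\left\langle D^{\beta+\alpha}\phi\left(w\right)x,y\right\rangle_\HC\right|$.

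For the Hankel quantity $k=1$ I would instead use the pairing $\left\langle g,D^\alpha\Gamma_\phi f\right\rangle=\left\langle\left(D^\alpha g\right)\otimes\conjvar{f},\phi\right\rangle$, testing with $f=f_w$ and with $g$ the normalised Szegő kernel $\left(1-\left|w\right|^2\right)^{1/2}\left(1-\conj wz\right)^{-1}y$. A short computation with the Taylor coefficients, using the comparison $\sum_{n=0}^N\left(1+n\right)^\alpha\approx\left(1+N\right)^{1+\alpha}$, gives
\[
\left\langle D^\alpha\Gamma_\phi f_w,g\right\rangle=\left(1-\left|w\right|^2\right)\left\langle D^{1+\alpha}\left(M_\sigma\phi\right)\left(w\right)x,y\right\rangle_\HC,
\]
where $M_\sigma$ is the Fourier multiplier with bounded, boundedly invertible symbol $\sigma_N=\left(1+N\right)^{-\left(1+\alpha\right)}\sum_{n=0}^N\left(1+n\right)^\alpha$. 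Hence $\left\|\phi\right\|_{1,\alpha}\gtrsim\left\|D^\alpha\left(M_\sigma\phi\right)\right\|_{\BC\left(\LC\right)}$, and since $M_\sigma$ commutes with $D^\alpha$ it remains only to know that $M_\sigma^{-1}$ acts boundedly on $\BC\left(\LC\right)$.

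The hard part will be the two auxiliary facts that reconcile the quantities produced above with the single semi-norm $\sup_w\left(1-\left|w\right|^2\right)\left\|D^{1+\alpha}\phi\left(w\right)\right\|_\LC$. For $k=2$ one needs the derivative characterisation of the Bloch space, namely $\sup_w\left(1-\left|w\right|^2\right)\left\|D^{1+\alpha}\phi\left(w\right)\right\|_\LC\lesssim\sup_w\left(1-\left|w\right|^2\right)^{\beta}\left\|D^{\beta+\alpha}\phi\left(w\right)\right\|_\LC$, i.e.\ the equivalence of the semi-norm defined through $D^1$ with the one defined through $D^\beta$; for $k=1$ one needs the boundedness on $\BC\left(\LC\right)$ of $M_\sigma^{-1}$, whose symbol is a regularly varying sequence tending to $1+\alpha$. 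By the slicing identity \eqref{Eq:BlochWBloch}, applied with $\YC=\SC^1$ and $\YC^*=\LC$, both statements reduce to the corresponding classical assertions for the scalar Bloch space $\BC\left(\C\right)$. Everything else—the normalisation of the kernels, the reproducing-formula computations, and the uniformity in $w$ of the comparability constants (most transparently through the standard geometry of Bergman discs, on which $1-\left|z\right|^2\approx1-\left|w\right|^2$ and $\left|1-w\conj z\right|\approx1-\left|w\right|^2$)—is routine.
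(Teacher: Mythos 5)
Your proposal is correct, but for the decisive case $k=1$ it takes a genuinely different route from the paper. The paper only writes out $k=1$, and does so by a single exact computation: it pairs $D^\alpha\Gamma_\phi$ against the plain kernel $\sum_m w^m z^m x$ and a \emph{modified} kernel with coefficients $c_n=1+n-n\bigl(\tfrac{n}{1+n}\bigr)^\alpha$, chosen precisely so that $(1+n)^\alpha c_n=(1+n)^{1+\alpha}-n^{1+\alpha}$ telescopes, giving $\sum_{n=0}^N(1+n)^\alpha c_n=(1+N)^{1+\alpha}$ and hence the pairing equals $\bigl\langle D^{1+\alpha}\phi(w)x,y\bigr\rangle_\HC$ \emph{exactly} — no multiplier correction, no auxiliary lemma. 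You instead test on two plain Szegő kernels, accept the multiplier $\sigma_N=(1+N)^{-(1+\alpha)}\sum_{n=0}^N(1+n)^\alpha$, and then remove it by boundedness of $M_\sigma^{-1}$ on the Bloch space. That step deserves more care than the phrase ``regularly varying'' suggests: bounded symbols are in general \emph{not} Bloch coefficient multipliers (random signs fail), so you should record the expansion $\sigma_N^{-1}=(1+\alpha)+O\bigl(\tfrac1{1+N}\bigr)$ (Euler--Maclaurin), after which $M_\sigma^{-1}$ is a constant plus a small multiplier, and small multipliers preserve Bloch because Bloch functions have uniformly bounded Taylor coefficients — a one-line argument, but a needed one. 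With that inserted, your $k=1$ argument is sound, at the cost of one extra lemma the paper's telescoping trick avoids; what your version buys is modularity and transparency. Your treatment of the Bergman cases $k=2,\dots,6$ — self-adjointness of the projections plus normalized reproducing kernels, with the correct kernels $(1-\conj wz)^{-2}y$ and $\tfrac{(1+\conj wz)y}{(1-\conj wz)^{2}}$ whose completion by the factor $(1-w\conj z)^{-1}$ from $\conjvar{f_w}$ reproduces the point value, then $k=4$ via Proposition \ref{Proposition:InterchangeableWeights} and $k=6$ trivially — is exactly the ``similar'' argument the paper leaves unwritten, and your normalizations check out (e.g. $\|\,(1-\conj wz)^{-2}\|_{A_1^2}\approx(1-|w|^2)^{-1/2}$ yields the factor $(1-|w|^2)$ on the nose); for $k=2$ you correctly identify the additional classical ingredient, the higher-order derivative characterization $\sup_w(1-|w|^2)\|D\psi(w)\|\approx\sup_w(1-|w|^2)^\beta\|D^\beta\psi(w)\|$, which via \eqref{Eq:BlochWBloch} reduces to the scalar Bloch statement. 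Your dilation preamble is also the right move, since for general $\phi\in\Hol(\LC)$ the product $(D^{1+\alpha}\phi)\conjvar{f_w}$ need not lie in $L^2_1(\HC)$ a priori; just note (routinely) that $\|\phi_\rho\|_{k,\alpha}\le\|\phi\|_{k,\alpha}$, which follows from $\Gamma_{\phi_\rho}f(z)=\bigl(\Gamma_\phi f_\rho\bigr)(\rho z)$ and its Bergman analogues.
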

	\begin{proof}
		We consider only the case $k=1$. The other cases are similar. By \eqref{Eq:BlochWBloch} it suffices to prove that
		\begin{equation}\label{Eq:EstBloch}
		\left|\left\langle x, D^{1+\alpha} \phi \left(w\right)y\right\rangle_\HC\right|\lesssim \frac{\left\|\phi \right\|_{\alpha,1}\left\|x\right\|_\HC\left\|y\right\|_\HC}{1-\left|w\right|^2},\quad w\in\D,\ x,y,\in\HC.
		\end{equation}		
		Given $w\in\D$, $x,y\in\HC$, let
		\[
		f\left(z\right)=\sum_{n=0}^\infty \conj{w}^n  \left(1+n-n  \left(\frac{n}{1+n} \right)^\alpha \right)z^ny,\quad g\left(z\right)=\sum_{n=0}^\infty w^nz^nx,\quad z\in\D.
		\]
		A calculation shows that $1+n-n  \left(\frac{n}{1+n} \right)^\alpha$ is bounded in $n$, and so $\left\|f\right\|_{H^2}\left\|g\right\|_{H^2}\lesssim \frac{1}{1-\left|w\right|^2}$. 
		The definition of $\left\|\phi \right\|_{\alpha,1}$ now yields \eqref{Eq:EstBloch}.
	\end{proof}
	\begin{remark}
		Another proof of Lemma \ref{Lemma:HankelCarlesonBloch} is to use \eqref{Eq:BlochWBloch} together with the (already known) scalar version of Theorem \ref{Theorem:HankelCarleson}. Our approach is chosen so that our results do not depend on the scalar case.
	\end{remark}
		
	The qualitative content of the next lemma is known, and due to Peller \cite{Peller1982:VecHankOps}. See also \cite{Peller2003:HankOpsBook}*{Chapter 6.9}. However, the original proof gives a much worse quantitative dependence on $l$. The proof we present is a bit lengthy, and is postponed to the next subsection.
	\begin{lemma}\label{Lemma:OrderControl}
		For each $\alpha>0$ it holds that
		\[
		\left\|D^\alpha \Gamma_{D^{-\alpha-l}\psi }D^l\right\|_{H^2\left(\HC\right)\to H^2\left(\HC\right)}\le C l \left\|\psi \right\|_{\BC\left(\LC\right)},\quad l\in\N,\ \psi \in\Hol\left(\LC\right).
		\]
	\end{lemma}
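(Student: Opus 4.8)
The plan is to reduce the operator norm to a scalar bilinear form, represent the fractional weights by a single $\Gamma$-integral, and then extract the factor $l$ from the resulting normalisation constant.

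First I would record the matrix of $T:=D^\alpha\Gamma_{D^{-\alpha-l}\psi}D^l$. A direct computation from \eqref{Eq:HardyHankelFormula} gives, for $f\in\OC\left(\HC\right)$,
\[
\widehat{Tf}\left(n\right)=\sum_{m=0}^\infty\left(\frac{1+n}{1+m+n}\right)^{\alpha}\left(\frac{1+m}{1+m+n}\right)^{l}\hat\psi\left(m+n\right)\hat f\left(m\right),
\]
so that all the $l$-dependence sits in the clean factor $\left(\tfrac{1+m}{1+m+n}\right)^{l}$. Using the elementary identity $\left(1+m+n\right)^{-\alpha-l}=\Gamma\left(\alpha+l\right)^{-1}\int_0^1\rho^{m+n}\left(\log\frac1\rho\right)^{\alpha+l-1}d\rho$, I would then rewrite the pairing, for $f,g\in\OC_1\left(\HC\right)$, as
\[
\left\langle Tf,g\right\rangle=\frac{1}{\Gamma\left(\alpha+l\right)}\int_0^1\left(\log\tfrac1\rho\right)^{\alpha+l-1}S\left(\rho\right)d\rho,\qquad S\left(\rho\right)=\left\langle\Gamma_\psi \left(D^l f\right)_\rho,\left(D^\alpha g\right)_\rho\right\rangle,
\]
where $h_\rho\left(z\right)=h\left(\rho z\right)$. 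The point of this rewriting is that the scaling by $\rho<1$ tames the growth of $D^l f$, while $\Gamma_\psi$ now carries only the Bloch symbol $\psi$ itself.

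The second step is to estimate $S\left(\rho\right)$ purely in terms of $\left\|\psi\right\|_{\BC\left(\LC\right)}$. Since $\left(D^l f\right)_\rho,\left(D^\alpha g\right)_\rho\in\OC\left(\HC\right)$, I write $\Gamma_\psi u=P_+\left(\psi\conjvar{u}\right)$ and drop the projection against the analytic factor, so that $S\left(\rho\right)=\int_\T\left\langle\psi\,\conjvar{\left(D^l f\right)_\rho},\left(D^\alpha g\right)_\rho\right\rangle_\HC dm$. To bring in the Bloch seminorm, which only controls $\psi'$, I would pass from the circle to the disc via the Green/Littlewood--Paley identity $\int_\T F\,dm=F\left(0\right)+\frac{1}{2\pi}\int_\D\Delta F\left(z\right)\log\frac{1}{\left|z\right|}\,dA\left(z\right)$ applied to the mixed analytic/anti-analytic function $F\left(z\right)=\left\langle\psi\left(z\right)\left(D^l f\right)\left(\rho\conj{z}\right),\left(D^\alpha g\right)\left(\rho z\right)\right\rangle_\HC$. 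Computing $\Delta F=4\partial\bar\partial F$ produces exactly the two cross terms carrying $\psi'\left(z\right)$ (and no bare $\psi$), to which the pointwise Bloch bound $\left\|\psi'\left(z\right)\right\|_\LC\lesssim\left\|\psi\right\|_{\BC\left(\LC\right)}/\left(1-\left|z\right|^2\right)$ applies; a Cauchy--Schwarz split against the weight $\log\frac1{\left|z\right|}$ then reduces everything to weighted area integrals of $\left(D^l f\right)_\rho$ and $\left(D^\alpha g\right)_\rho$, which by the standard square-function identity are comparable to fixed-order Bergman/Sobolev norms of these functions. This yields a bound of the shape $\left|S\left(\rho\right)\right|\lesssim\left\|\psi\right\|_{\BC\left(\LC\right)}\Lambda_l\left(\rho\right)$, with an explicit $\Lambda_l\left(\rho\right)$ exhibiting factorial-type growth in $l$ as $\rho\uparrow 1$, reflecting the concentration of $D^l f$ at frequencies $m\approx l/\left(1-\rho\right)$.

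Finally I would insert this into the $\Gamma$-integral. The product $\Gamma\left(\alpha+l\right)^{-1}\int_0^1\left(\log\frac1\rho\right)^{\alpha+l-1}\Lambda_l\left(\rho\right)d\rho$ is a Laplace-type integral concentrated near $\rho=1$, and the whole point is that the factorial growth of $\Lambda_l$ as $\rho\to 1$ is almost exactly cancelled by the normalisation $\Gamma\left(\alpha+l\right)^{-1}$, leaving a residual factor linear in $l$ after a Stirling estimate. This balancing is the delicate quantitative heart of the lemma and the only place where the \emph{linear} rate — rather than Peller's cruder dependence — is won or lost: a lossy estimate of $\Lambda_l$, or a careless allocation of the weight $\left(1-\left|z\right|^2\right)^{-1}$ in the Cauchy--Schwarz step, immediately reintroduces spurious $\Gamma$-factors and destroys linearity. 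All estimates being uniform over $f,g\in\OC_1\left(\HC\right)$, the bound passes to the operator norm by density. As a sanity check, the case $l=0$ collapses to $\left\langle D^\alpha\Gamma_{D^{-\alpha}\psi}f,g\right\rangle$, and the same computation recovers $\left\|D^\alpha\Gamma_{D^{-\alpha}\psi}\right\|\lesssim\left\|\psi\right\|_{\BC\left(\LC\right)}$ with no $l$-factor; an alternative route to the $S\left(\rho\right)$-estimate, via the $A_{\gamma}^1\left(\SC^1\right)$--$\BC\left(\LC\right)$ duality recorded in the preliminaries, leads to the same asymptotic bookkeeping.
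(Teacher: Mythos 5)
Your first step is correct and is in fact a continuous-parameter disguise of the paper's own setup: the matrix identity for $D^\alpha\Gamma_{D^{-\alpha-l}\psi}D^l$ is right, the representation $\left(1+m+n\right)^{-\alpha-l}=\Gamma\left(\alpha+l\right)^{-1}\int_0^1\rho^{m+n}\left(\log\frac1\rho\right)^{\alpha+l-1}d\rho$ is right, and estimating $S\left(\rho\right)$ through the Bloch bound on $\psi'$ via a Green/Littlewood--Paley identity is the legitimate analogue of the paper's pairing $\left\langle f,D^\alpha\Gamma_{D^{-\alpha-l}\psi}D^lg\right\rangle\lesssim\left\|\psi\right\|_{\BC\left(\LC\right)}\left\|D^{-l}\left(\left(D^\alpha f\right)\otimes\conjvar{\left(D^lg\right)}\right)\right\|_{A^1_{\alpha-1,\log}\left(\SC^1\right)}$. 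But the proof collapses exactly at the point you yourself flag as ``the delicate quantitative heart'': the claim that the normalisation $\Gamma\left(\alpha+l\right)^{-1}$ cancels the growth of $\Lambda_l\left(\rho\right)$ up to a \emph{linear} factor is asserted with no mechanism, and as stated it is false. If you carry out your own recipe with fixed-order weights, Cauchy--Schwarz forces you to measure $\left(D^lf\right)_\rho$ in a fixed-order Bergman norm, giving $\Lambda_l\left(\rho\right)\approx\left(cl/\log\frac1\rho\right)^{l+O\left(1\right)}$; the crude sup-version of the outer integral then diverges like $\int_0^1\left(\log\frac1\rho\right)^{-1}d\rho$, and the careful version (Cauchy--Schwarz in $\rho$, equivalently the discrete identity $\left\|D^lg\right\|_{A^2_{2l-1,\log}\left(\HC\right)}=\Gamma\left(1+2l\right)^{1/2}\left\|g\right\|_{H^2\left(\HC\right)}$) leaves the residual factor $\Gamma\left(2l\right)^{1/2}/\Gamma\left(\alpha+l\right)\approx 2^l$ by Stirling. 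This exponential loss is not an artifact of sloppiness: the paper's remark after Lemma \ref{Lemma:PrimitiveNorm} states explicitly that without further structure the method yields $2^l\left\|\psi\right\|_{\BC\left(\LC\right)}$, ``far from sufficient.'' Your heuristic that $D^lf$ concentrates at frequencies $m\approx l/\left(1-\rho\right)$ is the hidden flaw: it is simply untrue for general $f\in H^2\left(\HC\right)$, whose mass may sit at frequencies $m\lesssim l$, where the Laplace integral provides no concentration gain.

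The missing idea is the paper's frequency decomposition, which is precisely what converts $2^l$ into $Cl$. One splits $f$ and $g$ at degree $l$: for the low-low part, a brute-force triangle inequality plus Cauchy--Schwarz over the $\left(l+1\right)\times\left(l+1\right)$ coefficient block gives the bound $l\left\|f\right\|_{H^2\left(\HC\right)}\left\|g\right\|_{H^2\left(\HC\right)}$ --- this pedestrian step is the \emph{only} source of the linear factor, and your proposal has no analogue of it; for the remaining parts, the tensor $\left(D^\alpha f\right)\otimes\conjvar{\left(D^lg\right)}$ vanishes to order $\ge l$ at the origin, and the sharp constant in Lemma \ref{Lemma:PrimitiveNorm} improves by $2^{-l}\left(\frac{2+l}{1+l}\right)^l\approx e\,2^{-l}$, exactly cancelling the exponential and in fact producing a \emph{decaying} contribution $l^{1/4-\alpha}$. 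To repair your argument you would need to import this splitting (low frequencies handled by hand, high frequencies fed into your Laplace bookkeeping with the order-of-vanishing gain made quantitative), at which point it becomes the paper's proof in integral-representation form rather than an alternative to it.
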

	
	We are now ready for the main part of the argument. Given $f\in\OC\left(\HC\right)$, and $\phi \in\Hol\left(\LC\right)$, we use the formulas \eqref{Eq:HardyHankelFormula} and \eqref{Eq:BergmanHankelFormula} to write
	\begin{align*}
	\left\|D^\alpha \Gamma_\phi  f\right\|_{H^2\left(\HC\right)}^2
	&=
	\sum_{n=0}^\infty\left(1+n\right)^{2\alpha}  \left\|\sum_{k=0}^\infty \hat \phi \left(n+k\right)\hat f\left(k\right)\right\|_\HC^2
	\\
	&=
	\sum_{n=0}^\infty\frac{1}{\left(1+n\right)^{2\beta}}  \left\|\sum_{k=0}^\infty   \left(\frac{1+n}{1+n+k} \right)^{\alpha+\beta}   \left(D^{\alpha+\beta} \phi  \right)^{\hat{}}\left(n+k\right)\hat f\left(k\right) \right\|_\HC^2
	\\
	&=
	\left\|P_{\alpha+\beta-1,\log}  \left(\left(D^\beta \psi \right)\conjvar{f}  \right)\right\|_{A^2_{2\beta-1,\log}\left(\HC\right)}^2,
	\end{align*}
	where $\psi =D^\alpha \phi $.
	
	A well known fact about standard weighted Bergman spaces is that there exists many bounded projections from $L_{\gamma}^p$ onto $A_{\gamma}^p$, eg. \cite{Hedenmalm-Korenblum-Zhu2000:BergmanSpacesBook}*{Theorem 1.10}. This inspires us to replace $P_{\alpha+\beta-1,\log}$ with $P_{2\beta-1,\log}$. By the triangle inequality
	\begin{align*}
	&\left\|P_{\alpha+\beta-1,\log}  \left(\left(D^\beta \psi  \right)\conjvar{f} \right)\right\|_{A^2_{2\beta-1,\log}}
	\\
	&\le 	\left\|  \left(P_{\alpha+\beta-1,\log}-P_{2\beta-1,\log} \right)  \left(\left(D^\beta \psi \right)\conjvar{f}  \right)\right\|_{A^2_{2\beta-1,\log}}+	\left\|P_{2\beta-1,\log}  \left(\left(D^\beta \psi \right)\conjvar{f}  \right)\right\|_{A^2_{2\beta-1,\log}}.
	\end{align*}
	We carry out a few manipulations with the Taylor coefficients of $\phi $ and $f$, use the power series expansion at the origin of the function $z\mapsto \left(1-z\right)^{\beta-\alpha}$, and apply Minkowski's inequality to obtain
	\begin{align*}
	&\left\|  \left(P_{\alpha+\beta-1,\log}-P_{2\beta-1,\log} \right)  \left(\left(D^\beta \psi \right)\conjvar{f} \right)\right\|_{A^2_{2\beta-1,\log}}^2
	\\
	&=
	\sum_{n=0}^\infty\frac{1}{\left(1+n\right)^{2\beta}}  \left\|\sum_{k=0}^\infty  \left[  \left(\frac{1+n}{1+n+k} \right)^{\alpha+\beta} -   \left(\frac{1+n}{1+n+k} \right)^{2\beta} \right]\left(D^{\beta} \psi \right)^{\hat{}}\left(n+k\right)\hat f\left(k\right) \right\|_\HC^2
	\\
	&=
	\sum_{n=0}^\infty   \left\|\sum_{k=0}^\infty  \left[  \left(\frac{1+n}{1+n+k} \right)^{\alpha} -   \left(\frac{1+n}{1+n+k} \right)^{\beta} \right]\hat \psi \left(n+k\right)\hat f\left(k\right) \right\|_\HC^2
	\\
	&=
	\sum_{n=0}^\infty   \left\|\sum_{k=0}^\infty   \left(\frac{1+n}{1+n+k} \right)^{\alpha} \left[1 -   \left(1-\frac{k}{1+n+k} \right)^{\beta-\alpha} \right]\hat \psi \left(n+k\right)\hat f\left(k\right) \right\|_\HC^2
	\\
	&=
	\sum_{n=0}^\infty   \left\|\sum_{l=1}^\infty\binom{\beta-\alpha}{l}\left(-1\right)^l\sum_{k=0}^\infty \frac{\left(1+n\right)^\alpha \left(1+k\right)^l}{\left(1+n+k\right)^{\alpha+l}}\hat \psi \left(n+k\right)\left(\frac{k}{1+k}\right)^l\hat f\left(k\right) \right\|_\HC^2
	\\
	&\le
	\left(\sum_{l=1}^\infty  \left|\binom{\beta-\alpha}{l} \right|  \left(\sum_{n=0}^\infty   \left\|\sum_{k=0}^\infty \frac{\left(1+n\right)^\alpha \left(1+k\right)^l}{\left(1+n+k\right)^{\alpha+l}}\hat \psi \left(n+k\right)  \left(\frac{k}{1+k} \right)^l\hat f\left(k\right) \right\|_\HC^2 \right)^{1/2} \right)^2
	\\
	&=
	\left(\sum_{l=1}^\infty  \left|\binom{\beta-\alpha}{l}\right|\left\|D^\alpha\Gamma_{D^{-\alpha-l}}D^l f_l\right\|_{H^2\left(\HC\right)} \right)^2,
	\end{align*}
	where $f_l$ is defined by $\hat f_l\left(k\right)=  \left(\frac{k}{1+k} \right)^l\hat f\left(k\right)$. By Stirling's formula, the binomial coefficients $\binom{\beta-\alpha}{l}$ decay like $\frac{1}{l^{1+\beta-\alpha}}$, and since the map $f\mapsto f_l$ is obviously $H^2\left(\HC\right)$-contractive for each $l$, we use Lemma \ref{Lemma:OrderControl} to conclude that
	\[
	\left\|  \left(P_{\alpha+\beta-1,\log}-P_{2\beta-1,\log} \right)  \left(\left(D^\beta \psi \right)\conjvar{f} \right)\right\|_{A^2_{2\beta-1,\log}}\lesssim \left\|\psi \right\|_{\BC\left(\LC\right)}\left\|f\right\|_{H^2\left(\HC\right)},
	\]
	since $\beta>1+\alpha$. Lemma \ref{Lemma:HankelCarlesonBloch} then implies that
	\[
	\left\|P_{\alpha+\beta-1,\log}  \left(\left(D^\beta \psi \right)\conjvar{f}  \right)\right\|_{A^2_{2\beta-1,\log}} \lesssim \left\|\phi \right\|_{2,\alpha}\left\|f\right\|_{H^2\left(\HC\right)}.
	\]
	This proves that $\left\|\phi \right\|_{1,\alpha}\lesssim \left\|\phi \right\|_{2,\alpha}$. It is straightforward to use the same type of argument to show that $\left\|\phi \right\|_{2,\alpha}\lesssim \left\|\phi \right\|_{1,\alpha}$.
	
	In order to prove that $\left\|\phi \right\|_{2,\alpha}\approx \left\|\phi \right\|_{3,\alpha}$, we note that
	\[
	\left\|P_{2\beta-1,\log}  \left(\left(D^\beta \psi \right)\conjvar{f}  \right)\right\|_{A^2_{2\beta-1,\log}}
	=
	\left\|P_{\beta,\log}  \left(\left(D^1 \psi \right)\conjvar{f}  \right)\right\|_{A^2_{1,\log}}.
	\]
	We repeat the above argument in order to replace $P_{\beta,\log}$ with $P_{1,\log}$. This time instead of $\beta>1+\alpha$, we use that $\beta>2$.	A third application of the argument allows us to replace $P_{1,\log}$ with $P_1$, yielding $\left\|\phi \right\|_{3,\alpha}\approx \left\|\phi \right\|_{4,\alpha}$.
	
	As was pointed out earlier, $\left\|\phi \right\|_{4,\alpha}\approx \left\|\phi \right\|_{5,\alpha}$ is just a special case of Proposition \ref{Proposition:InterchangeableWeights}, while the estimate $\left\|\phi \right\|_{5,\alpha}\le \left\|\phi \right\|_{6,\alpha}$ is trivial. For the reverse inequality, if we identify $\HC$ as a subspace of rank one operators in $\SC^2$, it is obvious that
	\begin{align*}
	\sup_{f\in\OC_1\left(\HC\right)}\int \left\|  \left(D^{1+\alpha}\phi \right) \conjvar{f} \right\|_{\HC}^2\, dA_1 \le \sup_{f\in\OC_1\left(\SC^2\right)}\int \left\| \left(D^{1+\alpha}\phi\right) \conjvar{f} \right\|_{\SC^2}^2\, dA_1.
	\end{align*}
	By a simple argument
	\begin{align*}
	  \left|\int \tr  \left(   \left(D^{1+\alpha}\phi  \right)\conjvar{f}  \left(Dg \right)^* \right)dA_1 \right|
	\le
	\left\|\phi \right\|_{\alpha,5}\left\|f\right\|_{H^2\left(\SC^2\right)}\left\|g\right\|_{H^2\left(\SC^2\right)}
	\end{align*}
	holds whenever $f,g\in\OC\left(\SC^2\right)$. By Lemma \ref{Lemma:AlemanPerfekt}, $\left\|\phi \right\|_{6,\alpha}\lesssim\left\|\phi \right\|_{5,\alpha}$. This completes the proof of Theorem \ref{Theorem:HankelCarleson}.
			
	\subsection{Proof of Lemma \ref{Lemma:OrderControl}}
	For $\alpha>0$ we define the operator $\tilde D^\alpha:\Hol\left(\YC\right)\to\Hol\left(\YC\right)$ by
	\[
	\tilde D^\alpha f\left(z\right)=\sum_{n=0}^\infty \frac{\Gamma\left(1+n+\alpha\right)}{\Gamma\left(1+n\right)}\hat f\left(n\right) z^n,\quad z\in\D.
	\]
	A calculation shows that
	\begin{align*}
	\left\langle \tilde D^\alpha f,\psi \right\rangle_{A_{\alpha-1}^1\left(\YC\right)}=
	\Gamma\left(1+\alpha\right)\left\langle f,\psi \right\rangle,
	\end{align*}
	whenever $f\in\OC\left(\YC\right)$ and $\psi\in\BC\left(\YC^*\right)$. 
	
	Going to the case where $\psi\in\BC\left(\LC\right)$, $f,g\in\OC\left(\HC\right)$, we obtain that
	\begin{align*}
	\left\langle f,D^\alpha\Gamma_{D^{-\alpha-l}\psi }D^lg\right\rangle
	&=
	  \left\langle  D^{-\alpha-l}  \left(  \left(D^\alpha f \right)\otimes \conjvar{  \left(D^l g \right)} \right),\psi  \right\rangle
	\\
	&=
	\frac{1}{\Gamma\left(1+\alpha\right)}  \left\langle \tilde D^\alpha D^{-\alpha}D^{-l}  \left(  \left(D^\alpha f \right)\otimes \conjvar{  \left(D^l g \right)} \right),\psi  \right\rangle_{A_{\alpha-1}^1\left(\YC\right)}.
	\end{align*}
	Since $\psi \in\BC\left(\LC\right)$, we have that 
	\[
	\left|\left\langle f,D^\alpha\Gamma_{D^{-\alpha-l}\psi }D^lg\right\rangle\right| \lesssim\left\|\psi \right\|_\BC\left\|\tilde D^\alpha D^{-\alpha}D^{-l}  \left(  \left(D^\alpha f \right)\otimes \conjvar{  \left(D^l g \right)} \right)\right\|_{A_{\alpha-1}^1\left(\SC^1\right)}.
	\]
	Following the ideas in \cite{Buckley-Koskela-Vukotic1999:FracIntDiffBergmanSpaces}, we use Stirling's formula to see that $\tilde D^\alpha D^{-\alpha}$ acts like the identity plus a small multiplier. By Propositions \ref{Proposition:InterchangeableWeights} and \ref{Proposition:SmallMultipliers}, we can now complete the proof of Lemma \ref{Lemma:OrderControl} by showing that
	\[
	\left\| D^{-l}  \left(  \left(D^\alpha f \right)\otimes \conjvar{  \left(D^l g \right)} \right)\right\|_{A_{\alpha-1,\log}^1\left(\SC^1\right)}\lesssim l \left\|f\right\|_{H^2\left(\HC\right)}\left\|g\right\|_{H^2\left(\HC\right)}.
	\]
	
	First we perform a simple decomposition of $f$ and $g$ into low and high frequencies. Assume that $f$ and $g$ are of degree at most $l$. By the triangle inequality we have
	\begin{align*}
	&\left\|D^{-l}  \left(  \left(D^\alpha f \right)\otimes \conjvar{  \left(D^l g \right)} \right)\right\|_{A_{\alpha-1,\log}^1\left(\SC^1\right)}
	\\
	&\le 
	\sum_{m,n=0}^l\left\|\hat f\left(m\right)\right\|_{\HC}\left\|\hat g\left(n\right)\right\|_{\HC} \left\| D^{-l}\left(\left(D^\alpha z^m\right)\left(D^l z^n\right)\right)\right\|_{A_{\alpha-1,\log}^1}
	\\
	&=
	\sum_{m,n=0}^l\frac{\left(1+m\right)^\alpha\left\|\hat f\left(m\right)\right\|_{\HC}\left(1+n\right)^l\left\|\hat g\left(n\right)\right\|_{\HC}}{\left(1+m+n\right)^l} \left\| z^{m+n}\right\|_{A_{\alpha-1,\log}^1}.
	\end{align*}
	Using polar coordinates we compute that
	\[
	\left\| z^{m+n}\right\|_{A_{\alpha-1,\log}^1}=\frac{2^\alpha\Gamma\left(1+\alpha\right)}{\left(2+m+n\right)^\alpha},
	\]
	and so
	\begin{align*}
	\left\|D^{-l}  \left(  \left(D^\alpha f \right)\otimes \conjvar{  \left(D^l g \right)} \right)\right\|_{A_{\alpha-1,\log}^1\left(\SC^1\right)}
	&\lesssim \sum_{m,n=0}^l\left\|\hat f\left(m\right)\right\|_{\HC}\left\|\hat g\left(n\right)\right\|_{\HC}
	\\
	&\le l\left\|f\right\|_{H^2\left(\HC\right)}\left\|g\right\|_{H^2\left(\HC\right)},
	\end{align*}
	by Cauchy--Schwarz's inequality. Thus the low frequencies exhibit the desired behaviour.
	
	We now consider the high frequencies. Assume that $  \left(D^\alpha f \right)\otimes \conjvar{  \left(D^l g \right)}$ has a zero of order $l$ at the origin. We can then use Lemma \ref{Lemma:PrimitiveNorm}, followed by Cauchy-Schwarz's inequality, and Parseval's identity to obtain that
	\begin{align*}
	&\left\|D^{-l}  \left(  \left(D^\alpha f \right)\otimes \conjvar{  \left(D^l g \right)} \right)\right\|_{A_{\alpha-1,\log}^1\left(\SC^1\right)}
	\\
	&\le
	\frac{\Gamma\left(1+\alpha\right)}{2^l\Gamma\left(1+\alpha+l\right)}  \left(\frac{2+l}{1+l} \right)^l\left\|  \left(D^\alpha f \right)\otimes \conjvar{  \left(D^l g \right)}\right\|_{A_{\alpha+l-1,\log}^1\left(\SC^1\right)}
	\\
	&\le
	\frac{\Gamma\left(\alpha\right)}{2^l\Gamma\left(\alpha+l\right)2l}  \left(\frac{2+l}{1+l} \right)^l\left\|D^\alpha f\right\|_{A_{2\alpha-1,\log}^2\left(\HC\right)}\left\|D^lg\right\|_{A_{2l-1,\log}^2\left(\HC\right)}
	\\
	&=
	\frac{\Gamma\left(\alpha\right)\Gamma\left(1+\alpha\right)^{1/2}\Gamma\left(2l\right)^{1/2}}{2^l\Gamma\left(\alpha+l\right)}  \left(\frac{2+l}{1+l} \right)^l\left\|f\right\|_{H^2\left(\HC\right)}\left\| g\right\|_{H^2\left(\HC\right)}
	\\
	&\lesssim
	l^{1/4-\alpha}\left\|f\right\|_{H^2\left(\HC\right)}\left\| g\right\|_{H^2\left(\HC\right)},
	\end{align*}
	where in the last step, we have used Stirling's formula. Assuming Lemma \ref{Lemma:PrimitiveNorm}, this completes the proof of Lemma \ref{Lemma:OrderControl}.
		
	\begin{lemma}\label{Lemma:PrimitiveNorm}
		Let $\alpha>0$, $N\in\N_0$, and assume that $h\in\Hol\left(\YC\right)$ has a zero of order $N$ at the origin. Then
		\[
		\left\|D^{-l}h\right\|_{A_{\alpha-1,\log}^1\left(\YC\right)}\le \frac{\Gamma\left(1+\alpha\right)}{2^l\Gamma\left(1+\alpha+l\right)}  \left(\frac{2+N}{1+N} \right)^l\left\|h\right\|_{A_{\alpha+l-1,\log}^1\left(\YC\right)},
		\]
		whenever $l\in\N$.
	\end{lemma}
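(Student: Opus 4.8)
The plan is to realize $D^{-l}$ as a positive average of dilations and then reduce the claimed bound to a one-dimensional inequality for circular means. First I would record the integral representation
\[
D^{-l}h(z)=\frac{1}{\Gamma(l)}\int_0^1 h(tz)\left(\log\frac1t\right)^{l-1}\,dt,\qquad z\in\D,
\]
which is verified on monomials using $(1+n)^{-l}=\frac{1}{\Gamma(l)}\int_0^1 t^n(\log(1/t))^{l-1}\,dt$. Taking $\YC$-norms inside the Bochner integral, integrating against $dA_{\alpha-1,\log}$, applying Fubini, and passing to polar coordinates, the whole estimate becomes a comparison of weighted integrals of the circular mean $m(s)=\frac{1}{2\pi}\int_0^{2\pi}\|h(se^{i\theta})\|_\YC\,d\theta$. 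Concretely I would produce explicit nonnegative radial weights $\lambda,\mu$ (evaluated through Beta and incomplete-Gamma integrals) so that $\|D^{-l}h\|_{A^1_{\alpha-1,\log}(\YC)}\le\int_0^1 m(s)\lambda(s)\,ds$ while $\|h\|_{A^1_{\alpha+l-1,\log}(\YC)}=\int_0^1 m(s)\mu(s)\,ds$ exactly; it then suffices to prove $\int_0^1 m(s)\bigl(\lambda(s)-C\mu(s)\bigr)\,ds\le0$, where $C=\frac{\Gamma(1+\alpha)}{2^l\Gamma(1+\alpha+l)}\bigl(\frac{2+N}{1+N}\bigr)^l$.

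Two facts drive the one-dimensional inequality. The first is a calibration identity: evaluating both sides on the single monomial $z^N$, where the triangle inequality above is an equality because the phase is constant, gives $\int_0^1 s^N(\lambda-C\mu)\,ds=0$; this is exactly what pins down the constant $C$. The second is that the ratio $\lambda/\mu$ is strictly monotone in $s$. After the substitution $s=e^{-R}$ this monotonicity reduces to the statement that $R\mapsto\int_0^1 u^{l-1}(1-u)^{\alpha-1}e^{Ru}\,du$ is increasing, which is immediate since the integrand increases pointwise in $R$. Consequently $\lambda-C\mu$ changes sign exactly once, being positive for small $s$ and negative for $s$ near $1$.

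To finish I would use that $m(s)/s^N$ is nondecreasing: writing $h(z)=z^N\tilde h(z)$ with $\tilde h\in\Hol(\YC)$, the function $\|\tilde h\|_\YC$ is subharmonic, so its circular means increase, and these means equal $m(s)/s^N$. Setting $g=s^N(\lambda-C\mu)$ and $\psi=m/s^N\ge0$ (nondecreasing), and letting $s_0$ be the sign-change point of $g$, one has
\[
\int_0^1 \psi g\,ds=\int_0^1(\psi-\psi(s_0))g\,ds+\psi(s_0)\int_0^1 g\,ds,
\]
where the last integral vanishes by the calibration identity, and $(\psi-\psi(s_0))g\le0$ pointwise because $\psi-\psi(s_0)$ and $g$ have opposite signs on each side of $s_0$. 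This yields the required $\int_0^1 m(\lambda-C\mu)\,ds\le0$.

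The main obstacle is this one-dimensional inequality, and within it the interplay of the two structural facts. The single sign change is the delicate point, since no pointwise domination $\lambda\le C\mu$ holds; indeed $\lambda/\mu\to\infty$ as $s\to0$, so the estimate genuinely fails for functions without a zero at the origin. It is exactly the zero of order $N$, entering through monotonicity of $m/s^N$, that converts the one-signed moment information into the full bound. As an alternative bookkeeping device one may note that $C$ factorizes as $\prod_{j=0}^{l-1}\frac{1}{2(1+\alpha+j)}\cdot\frac{2+N}{1+N}$, so the general case follows from the case $l=1$, applied at the shifted parameters $\alpha,\alpha+1,\dots,\alpha+l-1$, together with the fact that $D^{-1}$ preserves the order of vanishing at the origin.
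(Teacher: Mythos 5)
Your proof is correct, and it reaches the sharp constant by a genuinely different mechanism than the paper's. The openings coincide: both use Flett's representation $D^{-l}h(z)=\frac{1}{\Gamma(l)}\int_0^1 h(tz)\left(\log\frac{1}{t}\right)^{l-1}dt$, the triangle inequality, and Fubini in polar coordinates to reduce everything to the circular means $m(s)$, and both ultimately exploit the order-$N$ zero through subharmonicity of $\|h(z)\|_{\YC}/|z|^N$. But the paper then computes: after the change of variables $x=\log(1/r)/\log(1/s)$, it performs the $x$-dependent rescaling $s\mapsto s^{\delta(x)}$ with $\delta(x)=\frac{2+N}{1+N+x}\ge 1$, uses subharmonicity pointwise in $x$ to replace the mean at radius $s^{\delta}$ by the mean at radius $s$, decouples the $s$- and $x$-integrals (the choice of $\delta$ makes the $s$-exponent identically $1$), and evaluates the surviving integral exactly via $x\mapsto\frac{(N+1)x}{N+2-x}$. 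You never find this substitution: instead you pin the constant down by the equality case on $z^N$ (which the paper records only a posteriori, in its remark on sharpness), obtain the single sign change of $\lambda-C\mu$ from the monotone-likelihood-ratio fact that $R\mapsto\int_0^1 u^{l-1}(1-u)^{\alpha-1}e^{Ru}\,du$ increases, and conclude by a Chebyshev-type comparison using that $m(s)/s^N$ is nondecreasing --- the same subharmonicity input, deployed as monotonicity of normalized means rather than as a dilation comparison. I checked the supporting details: the calibration identity holds because $\|z^N\|_{A^1_{\beta-1,\log}}=2^{\beta}\Gamma(1+\beta)(2+N)^{-\beta}$ makes the monomial computation come out exactly to $C$; the Jacobian bookkeeping gives $\lambda(s)\propto(\log(1/s))^{\alpha+l-1}\int_0^1(1-x)^{l-1}x^{\alpha-1}s^x\,dx$ and $\mu(s)\propto s\,(\log(1/s))^{\alpha+l-1}$, consistent with your reduction; and since $\lambda(s)\approx(\log(1/s))^{l-1}$ near $s=0$, the integrability needed for your splitting at $s_0$ is unproblematic. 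Your closing observation --- that $C$ factorizes so the case $l=1$, applied at the shifted parameters $\alpha,\dots,\alpha+l-1$, yields the general case by induction, legitimately since $D^{-1}$ preserves the order of vanishing --- is a genuine simplification absent from the paper. What your route buys is conceptual transparency: sharpness and the precise role of the zero of order $N$ are built into the mechanism rather than emerging from an inspired substitution; what the paper's route buys is a fully explicit, self-contained computation with no variational step.
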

	\begin{proof}
		We will use an idea of Flett \cite{Flett1972:DualIneqHardyLittlewood}. Term by term integration of the power series of $h$ shows that
		\[
		\left(D^{-l} \right)h\left(r\zeta\right)=\frac{1}{\Gamma\left(k\right)r}\int_{s=0}^{r}h_s\left(\zeta\right)  \left(\log  \left(\frac{r}{s} \right) \right)^{l-1}ds,\quad r\in\left[0,1\right),\zeta\in\T.
		\]
		By the triangle inequality
		\begin{align*}
			&\left\|D^{-l}h\right\|_{A_{\alpha-1,\log}^1\left(\YC\right)}
			\\
			&\le \frac{2\alpha}{\Gamma\left(l\right)}\int_{r=0}^1\int_{\T}\int_{s=0}^r\left\|h_s\left(\zeta\right)\right\|_{\YC}  \left(\log  \left(\frac{r}{s} \right) \right)^{l-1}  \left(\log  \left(\frac{1}{r^2} \right) \right)^{\alpha-1}ds\, dm\left(\zeta\right) dr
			\\
			&=
			\frac{\alpha 2^{\alpha}}{\Gamma\left(l\right)}\int_{s=0}^1\int_{\T}\left\|h_s\right\|_{\YC}\, dm \int_{r=s}^1  \left(\log  \left(\frac{1}{s} \right)-\log  \left(\frac{1}{r} \right) \right)^{l-1}  \left(\log  \left(\frac{1}{r} \right) \right)^{\alpha-1}dr\, ds.
		\end{align*}
		By the change of variables $\log\left(\frac{1}{r}\right)/\log\left(\frac{1}{s}\right)=x$ we obtain
		\begin{multline*}
		\int_{r=s}^1  \left(\log  \left(\frac{1}{s} \right)-\log  \left(\frac{1}{r} \right) \right)^{l-1}  \left(\log  \left(\frac{1}{r} \right) \right)^{\alpha-1}dr
		\\
		=\left(\log  \left(\frac{1}{s} \right) \right)^{\alpha+l-1}\int_{x=0}^1\left(1-x\right)^{l-1}x^{\alpha-1}s^xdx.
		\end{multline*}
		
		Therefore
		\begin{align*}
		&\left\|D^{-l}h\right\|_{A_{\alpha-1,\log}^1}
		\\
		&\le 
		\frac{\alpha 2^{\alpha}}{\Gamma\left(l\right)}\int_{s=0}^1\int_{\T}\left\|h_s\right\|_{\YC}\, dm   \left(\log  \left(\frac{1}{s} \right) \right)^{\alpha+l-1} \int_{x=0}^1\left(1-x\right)^{l-1}x^{\alpha-1}s^xdx\, ds
		\\
		&=
		\frac{\alpha 2^{\alpha}}{\Gamma\left(l\right)}\int_{x=0}^1\left(1-x\right)^{l-1}x^{\alpha-1}\int_{s=0}^1\int_{\T}\left\|h_s\right\|_{\YC}\, dm   \left(\log  \left(\frac{1}{s} \right) \right)^{\alpha+l-1} s^xds\, dx.
		\end{align*}
		We now replace the variable $s$ with $s^\delta$, where $\delta=\delta\left(x\right)$ will soon be chosen.
		\begin{align*}
		&\frac{\Gamma\left(l\right)}{\alpha 2^{\alpha}}\left\|D^{-l}h\right\|_{A_{\alpha-1,\log}^1}
		\\
		&\le 
		\int_{x=0}^1\left(1-x\right)^{l-1}x^{\alpha-1}\delta^{\alpha+l}\int_{s=0}^1\int_{\T}\left\|h_{s^\delta}\right\|_{\YC}\, dm   \left(\log  \left(\frac{1}{s} \right) \right)^{\alpha+l-1} s^{\left(1+x\right)\delta-1}ds\, dx
		\\
		&=
		\int_{x=0}^1\left(1-x\right)^{l-1}x^{\alpha-1}\delta^{\alpha+l}\int_{s=0}^1\int_{\T}\frac{\left\|h_{s^\delta}\right\|_{\YC}}{s^{\delta N}}\, dm   \left(\log  \left(\frac{1}{s} \right) \right)^{\alpha+l-1} s^{\left(1+x+N\right)\delta-1}ds\, dx.
		\end{align*}
		Choose $\delta=\frac{2+N}{1+N+x}$. Note that $\delta\ge 1$ whenever $x\in[0,1]$. By assumption, the function $z\mapsto \frac{f\left(z\right)}{z^N}$ is analytic. It follows by subharmonicity that
		\[
		\int_{\T}\frac{\left\|h_{s^\delta}\right\|_{\YC}}{s^{\delta N}}\, dm\le \int_{\T}\frac{\left\|h_{s}\right\|_{\YC}}{s^{N}}\, dm,
		\]
		and so
		\begin{align*}
		&	\frac{\Gamma\left(l\right)}{\alpha 2^{\alpha}}\left\|D^{-l}h\right\|_{A_{\alpha-1,\log}^1}
		\\
		&\le 
		\int_{x=0}^1\left(1-x\right)^{l-1}x^{\alpha-1}\delta^{\alpha+l}\int_{s=0}^1\int_{\T}\left\|h_s\right\|_{\YC}\, dm   \left(\log  \left(\frac{1}{s} \right) \right)^{\alpha+l-1} s^{\left(1+x+N\right)\delta-1-N}ds\, dx
		\\
		&=
		\int_{x=0}^1\left(1-x\right)^{l-1}x^{\alpha-1}  \left(\frac{2+N}{1+N+x} \right)^{\alpha+l}\int_{s=0}^1\int_{\T}\left\|h_s\right\|_{\YC}\, dm   \left(\log  \left(\frac{1}{s} \right) \right)^{\alpha+l-1} s\, ds\, dx
		\\
		&=
		\frac{1 }{2^{l+\alpha}\left(l+\alpha\right)}\left\|h\right\|_{A_{\alpha+l-1,\log}^1}\int_{x=0}^1\left(1-x\right)^{l-1}x^{\alpha-1}  \left(\frac{2+N}{1+N+x} \right)^{\alpha+l}dx.
		\end{align*}
		Replacing the variable $x$ with $\frac{\left(N+1\right)x}{N+2-x}$ we obtain
		\begin{align*}
		\int_{x=0}^1\left(1-x\right)^{l-1}x^{\alpha-1}  \left(\frac{2+N}{1+N+x} \right)^{\alpha+l}dx
		&=  \left(\frac{2+N}{1+N} \right)^l\int_{x=0}^1\left(1-x\right)^{l-1}x^{\alpha-1}dx
		\\
		&=
		  \left(\frac{2+N}{1+N} \right)^l\frac{\Gamma\left(l\right)\Gamma\left(\alpha\right)}{\Gamma\left(l+\alpha\right)},
		\end{align*}
		and the proof of Lemma \ref{Lemma:PrimitiveNorm} is complete.
	\end{proof}
	
	\begin{remark}
		The bound in Lemma \ref{Lemma:PrimitiveNorm} is sharp, as is seen by testing on the function $h\left(z\right)=z^N$. In particular we have that
		\[
		\left\|D^{-l}\right\|_{A_{\alpha-1,\log}^1\left(\YC\right)\to A_{\alpha+l-1,\log}^1\left(\YC\right)}= \frac{\Gamma\left(1+\alpha\right)}{\Gamma\left(1+\alpha+l\right)}.
		\]
		This shows that without the separation of $f$ and $g$ into low and high frequencies, the estimate obtained in Lemma \ref{Lemma:OrderControl} would instead be
		\[
		\left\|D^\alpha \Gamma_{D^{-\alpha-l}\psi }D^l\right\|_{H^2\left(\HC\right)\to H^2\left(\HC\right)}\lesssim 2^l \left\|\psi \right\|_{\BC\left(\LC\right)},\quad \psi\in\Hol\left(\LC\right),
		\]
		which is of course far from sufficient for proving Theorem \ref{Theorem:HankelCarleson}. Still, some of the estimates in the proof of Lemma \ref{Lemma:OrderControl} are very crude, indicating room for improvement. If Lemma \ref{Lemma:OrderControl} could be improved so that for each $l\in\N$
		\[
		\left\|D^\alpha \Gamma_{D^{-\alpha-l}\psi }D^l\right\|_{H^2\left(\HC\right)\to H^2\left(\HC\right)}\le C_l \left\|\psi \right\|_{\BC\left(\LC\right)},
		\]
		where $\sum_{l=1}^\infty \frac{C_l}{l^\gamma}<\infty$ whenever $\gamma>1$, then in the proof of Theorem \ref{Theorem:HankelCarleson} one could immediately prove that $\left\|\phi \right\|_{1,\alpha}\approx\left\|\phi \right\|_{3,\alpha}$, instead of using two iterations of the same argument.
	\end{remark}
	
	\section{$\HC$- and $\HC^*$-valued symbols}\label{Sec:SpecialCases}
	A function $k_w$, where $w\in\D$, defined by
	\[
	k_w\left(z\right)=\frac{1}{1-\conj{w} z},\quad z\in\D,
	\]
	is called a reproducing kernel function for $H^2$. By Parseval's formula, $\left\langle f,k_w\right\rangle=f\left(w\right)$ whenever $f\in H^2$, and $\left\|k_w\right\|_{H^2}^2=\frac{1}{1-\left|w\right|^2}$.	From \citelist{\cite{Blasco1997:VecValBMOAGeomBSpaces}\cite{Bonsall1984:BddnessHankMat}} we gather the following result:
	\begin{proposition}\label{Proposition:H-Valued}
		If $\phi:\D\to\HC$ is analytic, then $\phi \in H^1\left(\HC\right)^*$ if and only if either of the following conditions hold:
		\begin{itemize}
			\item[$\left(i\right)$]
			\[
			\sup_{I\subset\T}\frac{1}{m\left(I\right)}\int_I\left\|b\phi-\left(b\phi\right)_I\right\|_\HC\, dm<\infty.
			\]
			\item[$\left(ii\right)$]
			\[
			\sup_{f\in\OC_1} \int_\D \left|f\left(z\right)\right|^2\left\| \left(D\phi\right)\left(z\right)\right\|_\HC^2\left(1-\left|z\right|^2\right)dA\left(z\right)<\infty.
			\]
			\item[$\left(iii\right)$]
			\[
			\sup_{w\in\D} \left(1-\left|w\right|^2\right)\int_\D \left|k_w\left(z\right)\right|^2\left\| \left(D\phi\right)\left(z\right)\right\|_\HC^2\left(1-\left|z\right|^2\right)dA\left(z\right)<\infty.
			\]
			\item[$\left(iv\right)$]
			\[
			\sup_{f\in\OC_1} \left\|\Gamma_\phi f\right\|_{H^2\left(\HC\right)}<\infty.
			\]
			\item[$\left(v\right)$]
			\[
			\sup_{w\in\D} \left(1-\left|w\right|^2\right)\left\|\Gamma_\phi k_w\right\|_{H^2\left(\HC\right)}<\infty.
			\]
		\end{itemize}
		Moreover, the corresponding norms are comparable.
	\end{proposition}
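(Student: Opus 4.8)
The plan is to run the scalar argument behind the Proposition of the introduction, using that three features peculiar to $\HC$-valued (as opposed to $\LC$-valued) symbols survive the passage to a Hilbert-space target. First, $z\mapsto\left\|\left(D\phi\right)\left(z\right)\right\|_\HC^2\left(1-\left|z\right|^2\right)$ defines a nonnegative \emph{scalar} measure $\mu$ on $\D$, and the functions integrated in $(ii)$ and $(iii)$ are scalar as well, so the classical Carleson embedding theorem and the classical geometric description of Carleson measures recalled in the introduction apply verbatim. Second, because $\HC$ is a Hilbert space the Littlewood--Paley identity $\left\|F\right\|_{H^2\left(\HC\right)}^2\approx\left\|F\left(0\right)\right\|_\HC^2+\int_\D\left\|F'\left(z\right)\right\|_\HC^2\left(1-\left|z\right|^2\right)\,dA\left(z\right)$ is available. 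Third, since $\phi$ is $\HC$-valued and $f$ scalar one has $\left\|\left(D\phi\right)\left(z\right)f\left(\conj z\right)\right\|_\HC=\left|f\left(\conj z\right)\right|\,\left\|\left(D\phi\right)\left(z\right)\right\|_\HC$, and as $\OC_1$ is invariant under $f\mapsto\conjvar f$ the analytic and anti-analytic embeddings coincide and reduce to $(ii)$. It is precisely this symmetry that is absent for $\LC$-valued symbols, and its presence is what lets one work with the effective ``$\alpha=0$'' embedding here, even though Lemma \ref{Lemma:OrderControl}, whose high-frequency estimate degenerates as $\alpha\downarrow0$, is not available.

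I would first record the abstract endpoint. The factorization $H^1\left(\HC\right)=H^2\cdot H^2\left(\HC\right)$, a simple scalar-times-vector instance of Sarason's factorization, together with the identity $\left\langle\Gamma_\phi f,g\right\rangle=\left\langle\phi,f^\#g\right\rangle$ from the introduction read with the $\HC$-valued pairing, shows that $(iv)$ holds if and only if $\phi\in H^1\left(\HC\right)^*$, with comparable norms; no Radon--Nikodym obstruction intervenes, since $\HC$ has the Radon--Nikodym property. The substantive remaining link is the equivalence $(ii)\Leftrightarrow(iv)$.

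Writing $\Gamma_\phi f=P_+\left(\phi\conjvar f\right)$ and applying the Littlewood--Paley identity to $F=\Gamma_\phi f$, I would compare $\left\|F'\left(z\right)\right\|_\HC$ with $\left|f\left(\conj z\right)\right|\left\|\left(D\phi\right)\left(z\right)\right\|_\HC$. Expanding $\left(\phi\conjvar f\right)'=\phi'\conjvar f+\phi\left(\conjvar f\right)'$, the passage from $\phi'$ to $D\phi=z\phi'+\phi$ costs only lower-order terms, controlled by the Bloch estimate $\sup_{z\in\D}\left(1-\left|z\right|^2\right)\left\|\left(D\phi\right)\left(z\right)\right\|_\HC<\infty$ (valid under any of $(i)$--$(v)$, exactly as in Lemma \ref{Lemma:HankelCarlesonBloch}) together with Proposition \ref{Proposition:SmallMultipliers}; the orthogonal projection $P_+$ is absorbed using the boundedness of Bergman projections and Proposition \ref{Proposition:InterchangeableWeights}. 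The outcome is $\left\|\Gamma_\phi f\right\|_{H^2\left(\HC\right)}^2\approx\int_\D\left|f\left(\conj z\right)\right|^2\left\|\left(D\phi\right)\left(z\right)\right\|_\HC^2\left(1-\left|z\right|^2\right)\,dA\left(z\right)$ modulo the value at the origin, and by the conjugation symmetry the supremum over $\OC_1$ gives $(ii)\Leftrightarrow(iv)$ with comparable constants. I expect this identification to be the main obstacle: it is the one place where the vectorial geometry genuinely enters, and it is tractable precisely because $\mu$ is scalar (this is essentially Bonsall \cite{Bonsall1984:BddnessHankMat}); everything else is scalar bookkeeping.

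Finally I would insert the reproducing-kernel and oscillation reformulations. Because $\mu$ is a scalar Carleson measure, the classical Carleson embedding theorem from the introduction yields $(ii)\Leftrightarrow(iii)$, as $(1-\left|w\right|^2)\int_\D\left|k_w\right|^2\,d\mu$ is comparable to the normalized mass of $\mu$ over the Carleson box at $w$, that is, to the intensity \eqref{Eq:CarlesonInt}. Testing $(iv)$ on the normalized kernels $k_w/\left\|k_w\right\|_{H^2}$ gives $(iv)\Rightarrow(v)$ at once, and specializing the Littlewood--Paley computation of the preceding paragraph to $f=k_w$ turns $(v)$ into $(iii)$ (after the harmless relabeling $w\mapsto\conj w$); hence $(ii)$--$(v)$ are mutually equivalent with comparable constants. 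It remains to identify $(i)$: the oscillation seminorm there is comparable to the Carleson constant of $\mu$, i.e.\ to $(ii)$. For a Hilbert-space target this is the vector-valued Fefferman--Stein description of $H^1\left(\HC\right)^*$ as $\HC$-valued $\textrm{BMOA}$ due to Blasco and Bonsall \citelist{\cite{Blasco1997:VecValBMOAGeomBSpaces}\cite{Bonsall1984:BddnessHankMat}}, and at the Bloch level it is recovered from the scalarization \eqref{Eq:BlochWBloch} by testing against unit vectors $y\in\HC$. Tracking the constants through each equivalence delivers the asserted comparability of all five norms.
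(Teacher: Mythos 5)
First, a remark on the comparison itself: the paper does not prove Proposition \ref{Proposition:H-Valued} --- it is explicitly \emph{gathered} from Blasco \cite{Blasco1997:VecValBMOAGeomBSpaces} and Bonsall \cite{Bonsall1984:BddnessHankMat} --- so your attempt is measured against the standard arguments in those references. Much of your architecture is viable: the endpoint $(iv)\Leftrightarrow\phi\in H^1\left(\HC\right)^*$ via the scalar-times-vector factorization $H^1\left(\HC\right)=H^2\cdot H^2\left(\HC\right)$ is correct (and the remark about the Radon--Nikodym property is apt); $(ii)\Leftrightarrow(iii)$ via the classical Carleson theorem applied to the scalar measure $d\mu=\left\|\left(D\phi\right)\left(z\right)\right\|_\HC^2\left(1-\left|z\right|^2\right)dA\left(z\right)$ is correct; and your structural diagnosis --- that scalar-ness of $\mu$ is exactly what is present for $\HC$-valued symbols and absent for $\LC$-valued ones --- is the right one. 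One slip in the setup: $\OC_1$ is \emph{not} invariant under $f\mapsto\conjvar f$ (that map produces anti-analytic functions); the symmetry you need is $f\mapsto f^\#$, which satisfies $\left|f^\#\left(\conj z\right)\right|=\left|f\left(z\right)\right|$ and does identify the analytic and anti-analytic embeddings.

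The genuine gap is the engine you invoke twice: the per-function two-sided comparison $\left\|\Gamma_\phi f\right\|_{H^2\left(\HC\right)}^2\approx\int_\D\left|f\left(\conj z\right)\right|^2\left\|\left(D\phi\right)\left(z\right)\right\|_\HC^2\left(1-\left|z\right|^2\right)dA\left(z\right)$ ``modulo the value at the origin''. This is false for individual $f$: take $\phi\left(z\right)=zx$ with $\left\|x\right\|_\HC=1$ and $f\left(z\right)=z^2$; then $\Gamma_\phi f\equiv 0$ (its value at the origin included), while the right-hand integral is strictly positive. The comparison is true only after taking suprema over $\OC_1$, and proving \emph{that} is where the entire difficulty of the proposition lives: the effective exponent here is $\alpha=0$, where the paper's projection-swapping machinery (Lemma \ref{Lemma:OrderControl}, which requires $\alpha>0$) is unavailable, and where the $\LC$-valued analogue of the statement is actually false by Corollary \ref{Corollary:CNP} --- so ``absorbing $P_+$ by boundedness of Bergman projections'' cannot be generic bookkeeping, and your sketch never pinpoints where scalar-ness of $\mu$ enters the estimate. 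Both uses admit repairs, but by different arguments than the one you give. For $(ii)\Rightarrow(iv)$, dualize: $\left\langle\Gamma_\phi f,g\right\rangle=\left\langle\phi,f^\# g\right\rangle=\frac{1}{\pi}\int_\D\left\langle D\phi,D\left(f^\#g\right)\right\rangle_\HC\log\frac{1}{\left|z\right|^2}\,dA$, expand $D\left(f^\#g\right)$ by the Leibniz rule $D(uv)=(Du)v+u(Dv)-uv$, and apply Cauchy--Schwarz: the term $\int\left\|D\phi\right\|_\HC\left|f^\#\right|\left\|Dg\right\|_\HC\left(1-\left|z\right|^2\right)dA$ splits into the embedding of $f^\#$ into $L^2\left(\mu\right)$ times the Littlewood--Paley norm of $g$, while $\int\left\|D\phi\right\|_\HC\left|Df^\#\right|\left\|g\right\|_\HC\left(1-\left|z\right|^2\right)dA$ splits the other way, using that a scalar Carleson measure embeds $H^2\left(\HC\right)$ with the same constant (expand in an orthonormal basis) --- this is the precise point where $\HC$-valuedness is used and where the $\LC$-valued case breaks. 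For $(v)\Rightarrow(iii)$, do not specialize a general identity; use the exact formula $\Gamma_\phi k_w\left(z\right)=\phi\left(\conj w\right)+z\,\frac{\phi\left(z\right)-\phi\left(\conj w\right)}{z-\conj w}$, which by orthogonality of the two summands yields
\begin{equation*}
\left(1-\left|w\right|^2\right)\left\|\Gamma_\phi k_w\right\|_{H^2\left(\HC\right)}^2=\left(1-\left|w\right|^2\right)\left\|\phi\left(\conj w\right)\right\|_\HC^2+\int_\T\left\|b\phi-\phi\left(\conj w\right)\right\|_\HC^2\,P_{\conj w}\,dm,
\end{equation*}
with $P_v$ the Poisson kernel, and then the standard Garsia-norm-to-Carleson-intensity estimates, which hold verbatim for Hilbert targets because $\Delta\left\|\phi\right\|_\HC^2=4\left\|\phi'\right\|_\HC^2$. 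With these two replacements your outline closes; as written, the central equivalences $(ii)\Leftrightarrow(iv)$ and $(v)\Rightarrow(iii)$ are asserted rather than proved.
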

	We point out that even though the results that $\left(iii\right)\Rightarrow\left(ii\right)$ and $\left(v\right)\Rightarrow\left(iv\right)$ look similar, the relation between them is not trivial. The fact that boundedness of a Hankel operator may be determined by its action on reproducing kernels is often referred to as Bonsall's theorem, and is an example of a so called reproducing kernel thesis. It was shown in \cite{Jacob-Rydhe-Wynn2014:WeightWeissConjRKTGenHankOps} that for scalar-valued symbols, the operators $D^\alpha\Gamma_\phi :H^2\to H^2$ ($\alpha\ge 0$) have a reproducing kernel thesis, while $\left(D^\alpha \Gamma_{\phi^\#}\right)^*:H^2\to H^2$ do not. For $\HC$-valued symbols, $D^\alpha\Gamma_\phi :H^2\to H^2\left(\HC\right)$ ($\alpha\ge 0$) satisfies a reproducing kernel thesis. The proof is the same as in the scalar case. In this section, we investigate the corresponding results for Carleson embeddings.
			
	By specializing Theorem \ref{Theorem:HankelCarleson} to the case of rank one-valued symbols, we obtain the following corollary:
	\begin{corollary}\label{Corollary:HankelCarlesonrankone}
		Let $\alpha>0$ and $\phi\in\Hol\left(\HC\right)$. Then the operator $D^\alpha\Gamma_\phi:H^2\to H^2\left(\HC\right)$ is bounded if and only if 
		\begin{equation}\label{Eq:RankOneCarlesonCondition}
		\sup_{f\in\OC_1}\int_ \D \left|f\left(z\right)\right|^2\left\|D^{1+\alpha}\phi\left(z\right)\right\|_\HC^2\left(1-\left|z\right|^2\right)dA\left(z\right)<\infty.
		\end{equation}
		Moreover, the above supremum is comparable to $\left\|D^\alpha\Gamma_\phi\right\|_{H^2\to H^2\left(\HC\right)}^2$.
	\end{corollary}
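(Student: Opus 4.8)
The plan is to reduce this to Theorem \ref{Theorem:HankelCarleson} by realizing the $\HC$-valued symbol $\phi$ as a rank one-valued $\LC$-valued symbol, and then simplifying the resulting Carleson functional. Fix a unit vector $e_0\in\HC$ and set $\Phi(z)=\phi(z)\otimes e_0\in\Hol\left(\LC\right)$, so that $\Phi(z)v=\left\langle v,e_0\right\rangle_\HC\phi(z)$ and $\hat\Phi(k)=\hat\phi(k)\otimes e_0$. Since $x\mapsto x\otimes e_0$ is an isometric embedding of $\HC$ into $\LC$, this is a legitimate $\LC$-valued analytic function, and Theorem \ref{Theorem:HankelCarleson} applies to it.

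The first step is to identify the two Hankel operators. Given $f\in\OC\left(\HC\right)$, put $g=\left\langle f(\cdot),e_0\right\rangle_\HC\in\OC$, so that $\hat g(m)=\left\langle\hat f(m),e_0\right\rangle_\HC$. Inspecting \eqref{Eq:HardyHankelFormula} one sees that $\hat\Phi(m+n)\hat f(m)=\hat g(m)\hat\phi(m+n)$, whence $\Gamma_\Phi f=\Gamma_\phi g$; as $D^\alpha$ is a Fourier multiplier, $D^\alpha\Gamma_\Phi f=D^\alpha\Gamma_\phi g$. By Cauchy--Schwarz and $\left\|e_0\right\|_\HC=1$ we have $\left\|g\right\|_{H^2}\le\left\|f\right\|_{H^2\left(\HC\right)}$, while every $g\in\OC_1$ is realized by the choice $f=g\,e_0\in\OC_1\left(\HC\right)$. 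Taking suprema in both directions therefore yields the exact identity $\left\|D^\alpha\Gamma_\Phi\right\|_{H^2\left(\HC\right)\to H^2\left(\HC\right)}=\left\|D^\alpha\Gamma_\phi\right\|_{H^2\to H^2\left(\HC\right)}$.

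Next I would compute the Carleson functional of $\Phi$ supplied by Theorem \ref{Theorem:HankelCarleson}. Since $\left(D^{1+\alpha}\Phi\right)(z)=\left(D^{1+\alpha}\phi\right)(z)\otimes e_0$, acting on $f\left(\conj z\right)$ gives $\left(D^{1+\alpha}\Phi\right)(z)f\left(\conj z\right)=\left\langle f\left(\conj z\right),e_0\right\rangle_\HC\left(D^{1+\alpha}\phi\right)(z)$, and $\left\langle f\left(\conj z\right),e_0\right\rangle_\HC=g\left(\conj z\right)$. Thus the theorem's anti-analytic embedding condition for $\Phi$ reads
\[
\sup_{g\in\OC_1}\int_\D\left|g\left(\conj z\right)\right|^2\left\|\left(D^{1+\alpha}\phi\right)(z)\right\|_\HC^2\left(1-\left|z\right|^2\right)dA(z).
\]

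The final step is the scalar conjugation symmetry that collapses this anti-analytic expression into the analytic one in \eqref{Eq:RankOneCarlesonCondition}. Writing $g^\#(z)=\conj{g\left(\conj z\right)}$ we have $\left|g\left(\conj z\right)\right|=\left|g^\#(z)\right|$, and $g\mapsto g^\#$ is an isometric bijection of $\OC_1$; replacing $g$ by $g^\#$ turns the integrand into $\left|h(z)\right|^2\left\|\left(D^{1+\alpha}\phi\right)(z)\right\|_\HC^2\left(1-\left|z\right|^2\right)$ with $h$ ranging over $\OC_1$, which is precisely \eqref{Eq:RankOneCarlesonCondition}. The comparability to $\left\|D^\alpha\Gamma_\phi\right\|_{H^2\to H^2\left(\HC\right)}^2$ is inherited from Theorem \ref{Theorem:HankelCarleson} via the norm identity of the first step. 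Since the analytic heavy lifting is done by that theorem, no serious obstacle remains; the only point genuinely requiring care is this last symmetry, which is exactly the place where the generally delicate passage between anti-analytic and analytic Carleson embeddings (nontrivial in the vectorial setting, as the final corollary shows) becomes elementary in the rank one, scalar test-function regime.
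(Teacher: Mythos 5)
Your proposal is correct and is precisely the paper's argument: the paper proves this corollary in one line by ``specializing Theorem \ref{Theorem:HankelCarleson} to the case of rank one-valued symbols,'' and your realization $\Phi(z)=\phi(z)\otimes e_0$, the identification $\Gamma_\Phi f=\Gamma_\phi\left\langle f,e_0\right\rangle_\HC$ with the norm identity via $f=g\,e_0$, and the final collapse of the anti-analytic condition through $g\mapsto g^\#$ are exactly the details that specialization consists of. The only point worth noting is trivial: for general $f\in\OC_1\left(\HC\right)$ the scalar function $g$ has $\left\|g\right\|_{H^2}\le 1$ rather than $=1$, but homogeneity of the quadratic functionals makes the supremum over the ball and the sphere agree, so your two-sided comparison stands.
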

	Combined with Proposition \ref{Proposition:H-Valued}, Corollary \ref{Corollary:HankelCarlesonrankone} says that $D^\alpha\Gamma_\phi:H^2\to H^2\left(\HC\right)$ is bounded if and only if $D^\alpha \phi\in \textrm{BMOA}\left(\HC\right)$, i.e. $\phi\in D^{-\alpha} \textrm{BMOA}\left(\HC\right)=\left(D^\alpha H^1\left(\HC\right)\right)^*$. This shows that Corollary \ref{Corollary:HankelCarlesonrankone} could also have been obtained from the factorization $D^\alpha H^1\left(\HC\right)=H^2\cdot D^\alpha H^2\left(\HC\right)$, see \citelist{\cite{Cohn-Verbitsky2000:FactTentSpacesHankOps}\cite{Rydhe2016:CharTriebel-LizorkinSpaces}}.
	
	We now state the corresponding result for functional-valued symbols:
	\begin{corollary}
		Let $\alpha>0$ and $\phi\in\Hol\left(\HC\right)$. Then the operator $D^\alpha\Gamma_{\phi^\#}:H^2\left(\HC\right)\to H^2$ is bounded if and only if
		\begin{equation}\label{Eq:CoRankOneCarlesonCondition}
		\sup_{f\in\OC_1\left(\HC\right)}\int_ \D \left|\left\langle f\left(z\right),\left(D^{1+\alpha}\phi\right)\left(z\right)\right\rangle_\HC\right|^2\left(1-\left|z\right|^2\right)dA\left(z\right)< \infty.
		\end{equation}
		Moreover, the above supremum is comparable to $\left\|D^\alpha\Gamma_{\phi^\#}\right\|_{H^2\left(\HC\right)\to H^2}^2$.
	\end{corollary}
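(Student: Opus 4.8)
The plan is to reduce the functional-valued symbol $\phi^\#$ to a rank one $\LC$-valued symbol and then invoke Theorem~\ref{Theorem:HankelCarleson}, exactly parallel to how Corollary~\ref{Corollary:HankelCarlesonrankone} was deduced from the rank one case. First I would fix a unit vector $e\in\HC$ and define $\Psi\in\Hol\left(\LC\right)$ by prescribing its Taylor coefficients $\hat\Psi\left(k\right)=e\otimes\hat\phi\left(k\right)$, $k\in\N_0$; defining $\Psi$ coefficientwise, rather than as $e\otimes\phi$, is what keeps it analytic, since $x\otimes y$ is anti-linear in $y$. Recalling that $e\otimes x\colon v\mapsto\left\langle v,x\right\rangle_\HC e$ and that $\widehat{\phi^\#}\left(k\right)=\hat\phi\left(k\right)^*$, a direct computation with \eqref{Eq:HardyHankelFormula} gives
\[
\Gamma_\Psi f=\left(\Gamma_{\phi^\#}f\right)e,\qquad f\in\OC\left(\HC\right),
\]
because $\left(e\otimes\hat\phi\left(m+n\right)\right)\hat f\left(m\right)=\left\langle\hat f\left(m\right),\hat\phi\left(m+n\right)\right\rangle_\HC e$. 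Since $D^\alpha$ merely rescales the scalar Taylor coefficients and leaves the constant vector $e$ untouched, the same identity holds with $D^\alpha\Gamma_\Psi$ and $D^\alpha\Gamma_{\phi^\#}$ in place of $\Gamma_\Psi$ and $\Gamma_{\phi^\#}$; as $\left\|e\right\|_\HC=1$ this yields the norm identity $\left\|D^\alpha\Gamma_\Psi\right\|_{H^2\left(\HC\right)\to H^2\left(\HC\right)}=\left\|D^\alpha\Gamma_{\phi^\#}\right\|_{H^2\left(\HC\right)\to H^2}$.

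Next I would apply Theorem~\ref{Theorem:HankelCarleson} to $\Psi$, which asserts that $D^\alpha\Gamma_\Psi$ is bounded if and only if
\[
\sup_{f\in\OC_1\left(\HC\right)}\int_\D\left\|\left(D^{1+\alpha}\Psi\right)\left(z\right)f\left(\conj z\right)\right\|_\HC^2\left(1-\left|z\right|^2\right)dA\left(z\right)<\infty,
\]
the supremum being comparable to $\left\|D^\alpha\Gamma_\Psi\right\|^2$. The remaining task is to identify this integrand with the one in \eqref{Eq:CoRankOneCarlesonCondition}. Using $\hat\Psi\left(k\right)=e\otimes\hat\phi\left(k\right)$ and again the anti-linearity of the tensor product in its second slot, one computes
\[
\left(D^{1+\alpha}\Psi\right)\left(z\right)f\left(\conj z\right)=\left\langle f\left(\conj z\right),\left(D^{1+\alpha}\phi\right)\left(\conj z\right)\right\rangle_\HC\, e,
\]
so that, by $\left\|e\right\|_\HC=1$, the integrand equals $\left|\left\langle f\left(\conj z\right),\left(D^{1+\alpha}\phi\right)\left(\conj z\right)\right\rangle_\HC\right|^2\left(1-\left|z\right|^2\right)$.

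Finally, since the reflection $z\mapsto\conj z$ preserves $\D$, the area measure $dA$, and the weight $1-\left|z\right|^2$, the substitution $w=\conj z$ turns the displayed supremum into exactly the one in \eqref{Eq:CoRankOneCarlesonCondition}, the optimizing class $\OC_1\left(\HC\right)$ being unaffected. Combined with the norm identity from the first step, this proves both the equivalence and the comparability of the supremum in \eqref{Eq:CoRankOneCarlesonCondition} with $\left\|D^\alpha\Gamma_{\phi^\#}\right\|_{H^2\left(\HC\right)\to H^2}^2$. The only real subtlety, and hence the step deserving the most care, is the bookkeeping forced by the anti-linearity of $x\otimes y$ in $y$: it is responsible both for $\Psi$ being genuinely analytic and for the conjugated argument $\conj z$ in $\left(D^{1+\alpha}\phi\right)\left(\conj z\right)$, which the closing reflection absorbs. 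Everything else is a routine transcription of Theorem~\ref{Theorem:HankelCarleson} through the rank one identification.
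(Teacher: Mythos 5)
Your proposal is correct and takes essentially the same route as the paper, which obtains this corollary by specializing Theorem \ref{Theorem:HankelCarleson} to rank one-valued symbols; your construction $\hat\Psi\left(k\right)=e\otimes\hat\phi\left(k\right)$ simply makes that identification explicit, and the two delicate points — that anti-linearity of the tensor product in its second slot keeps $\Psi$ analytic, and that the resulting conjugated argument $\conj{z}$ is absorbed by the measure-preserving reflection $z\mapsto\conj{z}$ — are both handled correctly. No gaps; the norm identity $\left\|D^\alpha\Gamma_\Psi\right\|_{H^2\left(\HC\right)\to H^2\left(\HC\right)}=\left\|D^\alpha\Gamma_{\phi^\#}\right\|_{H^2\left(\HC\right)\to H^2}$ and the comparability statement follow exactly as you say.
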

	
	Even though $\HC$ and $\HC^*$ are isomorphic, condition \eqref{Eq:CoRankOneCarlesonCondition} is far more subtle than \eqref{Eq:RankOneCarlesonCondition}. It is easy to show that \eqref{Eq:RankOneCarlesonCondition} implies \eqref{Eq:CoRankOneCarlesonCondition}. The reverse implication does not hold, as is seen by Theorem \ref{Theorem:HankelCarleson} together with Proposition \ref{Proposition:Davidson-Paulsen1}. This also shows that $D^\alpha H^1\left(\HC\right)\ne H^2\left(\HC\right)\cdot D^\alpha H^2$. 
	
	Motivated by Proposition \ref{Proposition:H-Valued}, it is natural to consider the condition
	\begin{equation}\label{Eq:WeakBMOA}
	\sup_{\substack{w\in\D,\\ x\in\HC, \left\|x\right\|_\HC=1}}\left(1-\left|w\right|^2\right)\int_ \D \left|\left\langle k_w\left(z\right)x,\left(D^{1+\alpha}\phi\right)\left(z\right)\right\rangle_\HC\right|^2\left(1-\left|z\right|^2\right)dA\left(z\right)< \infty.
	\end{equation}
	This weak type condition means that the functions $z\mapsto \left\langle \phi\left(z\right),x\right\rangle_\HC$ are in scalar-valued $\textrm{BMOA}$, uniformly for all $x$ in the unit ball of $\HC$.	We use the conditions \eqref{Eq:RankOneCarlesonCondition}, \eqref{Eq:CoRankOneCarlesonCondition}, and \eqref{Eq:WeakBMOA} to define the respective spaces $\textrm{BMOA}_{\CC}\left(\HC\right)$, $\textrm{BMOA}_{\CC^\#}\left(\HC\right)$, and $\textrm{BMOA}_{\WC}\left(\HC\right)$. We then have the strict inclusions 
	\[
	\textrm{BMOA}_{\CC}\left(\HC\right)\subsetneq \textrm{BMOA}_{\CC^\#}\left(\HC\right) \subsetneq \textrm{BMOA}_{\WC}\left(\HC\right).
	\]
	We refer to \cite{Rydhe2016:CounterExsCarlesonEmbThm} for an example showing that the last inclusion is strict.
	
	\section{The Davidson--Paulsen results}\label{Sec:Davidson-Paulsen}
	We will now present the proofs of Propositions \ref{Proposition:Davidson-Paulsen1} and \ref{Proposition:Davidson-Paulsen2}. We once again point out that these are (at most) straightforward adaptations of the arguments used in \cite{Davidson-Paulsen1997:PolBddOps}. It will be convenient to identify $H^2\left(\HC\right)$ with $l^2\left(\N_0,\HC\right)$, and let $\HC= l^2\left(\N_0\right)$. We let $\left(e_n\right)_{n= 0}^\infty$ denote the canonical basis for $l^2\left(\N_0\right)$.
	
	\subsection{Proof of Proposition \ref{Proposition:Davidson-Paulsen1}}
	Let $x\in\HC$ be a fixed vector of unit length, and consider the function $\phi:z\mapsto \sum_{n=0}^\infty \beta_n x\otimes e_n z^n$, where $\left(\beta_n\right)_{n=0}^\infty$ is some scalar sequence of moderate growth. The function $\phi$ is obviously rank one-valued, and with the right choice of $\left(\beta_n\right)_{n=0}^\infty$ it has the property that $D^\alpha\Gamma_\phi$ is bounded on $H^2\left(\HC\right)$, while $\Gamma_\phi D^\alpha$ is not. 
	
	Since the contraction $H^2\left(\HC\right)\ni f\mapsto \left\langle f,x\right\rangle_\HC\in H^2$ maps a subset of the unit sphere in $H^2\left(\HC\right)$ onto the unit sphere in $H^2$, we may instead consider boundedness of $D^\alpha\Gamma_\psi:H^2\left(\HC\right)\to H^2$, where $\psi$ is the $\HC^*$-valued function $z\mapsto \sum_{n=0}^\infty \beta_n e_n^* z^n$.
	
	It will be simpler to consider boundedness of the operators $\left(D^\alpha\Gamma_\psi\right)^*=\Gamma_{\psi^\#}D^\alpha$ and $\left(\Gamma_\psi D^\alpha\right)^*=D^\alpha \Gamma_{\psi^\#}$. Let $X=[\beta_{m+n}e_{m+n}]_{m,n\ge 0}$ be the matrix representation of $\Gamma_{\psi^\#}$. The goal is now to show that $X\  \diag\left(\left(1+n\right)^{\alpha}\right)_{n\ge 0}$ is bounded from $l^2\left(\N_0\right)$ to $l^2\left(\N_0,\HC\right)$, while $\diag\left(\left(1+n\right)^{\alpha}\right)_{n\ge 0} X$ is not. 
	
	Obviously, the operator norm of $D^\alpha X$ is at least as big as the $l^2\left(\N_0,\HC\right)$-norm of each column of the matrix, i.e.
	\[
	\left\|D^\alpha X\right\|_{l^2\left(\N_0\right)\to l^2\left(\N_0,\HC\right)}^2\ge \sup_{k\in\N_0}\sum_{n=0}^\infty \left(1+n\right)^{2\alpha}\left|\beta_{n+k}\right|^2,
	\]
	so if for example $\sum_{n=0}^\infty \left(1+n\right)^{2\alpha}\left|\beta_{n}\right|^2=\infty$, then $D^\alpha X$ is unbounded. On the other hand,
	\[
	\left\langle Xe_n,X e_m\right\rangle_\HC=  \left\{
	\begin{array}{cl}
	\gamma_n^2:=\sum_{k\ge n}\left|\beta_n\right|^2&\textnormal{for $m=n$},\\
	0&\textnormal{otherwise}.
	\end{array} \right.
	\]
	If follows that $  \left(XD^\alpha \right)^*XD^\alpha=\diag  \left(\left(1+n\right)^{2\alpha}\gamma_n^2 \right)_{n\ge 0}$, and so
	\[
	\left\|X D^\alpha\right\|_{l^2\left(\N_0\right)\to l^2\left(\N_0,\HC\right)}^2 = \sup_{n\in\N_0}\left(1+n\right)^{2\alpha}\sum_{k\ge n}\left|\beta_n\right|^2.
	\]
	Now chose $\beta_n=\frac{1}{\left(1+n\right)^{\alpha+1/2}}$ to complete the proof.

	\subsection{Proof of Proposition \ref{Proposition:Davidson-Paulsen2}}
	Given matrices $A=[a_{mn}]_{m,n\ge 0}$ and $B=[b_{mn}]_{m,n\ge 0}$, we define the Schur product $A\star B= \left[a_{mn}b_{mn}\right]_{m,n\ge 0}$. For a fixed matrix $B$, the operator $S_B:A\mapsto A\star B$ is called a Schur multiplier. The Grothendieck-Haagerup criterion, e.g. \cite{Paulsen2002:ComplBddMapsOpAlgs}*{Corollary 8.8}, states that $S_B:\LC\left(\HC\right)\to\LC\left(\HC\right)$ is bounded if and only if there exists sequences $\left(x_n\right)_{n\ge 0}$, $\left(y_n\right)_{n\ge 0}$ in the unit ball of $\HC$ such that $b_{mn}=\left\langle x_n,y_m\right\rangle_\HC$. From this follows the so called Bennett criterion, stating that if $S_B$ is a bounded Schur multiplier, and the iterated limits $\lim_{m\to\infty}\lim_{n\to\infty}b_{mn}$ and $\lim_{n\to\infty}\lim_{m\to\infty}b_{mn}$ both exist, then the limits are equal.
	
	Define an isometry $V:l^2\left(\N_0\right)\to H^2\left(\HC\right)$ by $Ve_n=e_n z^n$, and let $\left(E_{mn}\right)_{m,n\ge 0}$ be the scalar matrices defined by $\left\langle E_{mn}e_l,e_k\right\rangle_\HC=\delta_{mk}\delta_{nl}$. Given a scalar matrix $A=[a_{mn}]_{m,n\ge 0}$, we define the matrices $A_n=\sum_{k+l=n}a_{kl}E_{kl}$, and the function 
	\[
	\phi\left(z\right)=\sum_{n=0}^\infty A_nz^n=\diag  \left(z^k \right)_{k\ge 0}A\ \diag  \left(z^l \right)_{l\ge 0}.
	\]
	From the above relations, $\left\|\phi\right\|_{H^\infty\left(\LC\right)}=\left\|A\right\|_\LC$. Now $\Gamma_\phi$ corresponds to the (operator-valued) Hankel matrix $X=[A_{k+l}]_{k,l\ge 0}$. A calculation shows that
	\[
	V^*D^\alpha\Gamma_{D^{-\alpha}\phi}V=S_{B}\left(A\right),
	\]
	where $B= \left[  \left(\frac{1+m}{1+m+n} \right)^\alpha\right]_{m,n\ge 0}$. It follows that
	\[
	\left\|D^\alpha\Gamma_{D^{-\alpha}\phi}\right\|_{\LC\left(H^2\left(\HC\right)\right)}\ge \left\|S_{B}\left(A\right)\right\|_{\LC\left(l^2\left(\N_0\right)\right)}.
	\]
	From Bennett's criterion, $S_B$ is not a bounded Schur multiplier, and so the right-hand side in the above inequality will be infinite for some choice of $A$. It follows that, for the same choice of $A$, $D^\alpha\Gamma_{D^{-\alpha}\phi}$ is not bounded on $H^2\left(\HC\right)$.
	
	\section*{Acknowledgments}
	The author expresses his gratitude to Sandra Pott, and Alexandru Aleman, for interesting discussions on the topics above, and also to Erik Wahlén, and the anonymous referee, for their useful comments on the presentation of this manuscript.

\bibliographystyle{amsplain}

\begin{bibdiv}
	\begin{biblist}
		
		\bib{Aleksandrov-Peller1996:HankOpsSimToContr}{article}{
			author={Aleksandrov, A.~.B},
			author={Peller, V.~V.},
			title={Hankel operators and similarity to a contraction},
			date={1996},
			ISSN={1073-7928},
			journal={Internat. Math. Res. Notices},
			number={6},
			pages={263\ndash 275},
			url={http://dx.doi.org/10.1155/S1073792896000190},
			review={\MR{1386078}},
		}
		
		\bib{Aleman-Perfekt2012:HankFrmsEmbThmsDirichletSpaces}{article}{
			author={Aleman, A.},
			author={Perfekt, K.-M.},
			title={Hankel forms and embedding theorems in weighted {D}irichlet
				spaces},
			date={2012},
			ISSN={1073-7928},
			journal={Int. Math. Res. Not. IMRN},
			number={19},
			pages={4435\ndash 4448},
			url={http://dx.doi.org/10.1093/imrn/rnr195},
			review={\MR{2981715}},
		}
		
		\bib{Blasco1988:HardySpacesVecValDuality}{article}{
			author={Blasco, O.},
			title={Hardy spaces of vector-valued functions: duality},
			date={1988},
			ISSN={0002-9947},
			journal={Trans. Amer. Math. Soc.},
			volume={308},
			number={2},
			pages={495\ndash 507},
			url={http://dx.doi.org/10.2307/2001088},
			review={\MR{951618}},
		}
		
		\bib{Blasco1997:VecValBMOAGeomBSpaces}{article}{
			author={Blasco, O.},
			title={Vector-valued analytic functions of bounded mean oscillation and
				geometry of {B}anach spaces},
			date={1997},
			ISSN={0019-2082},
			journal={Illinois J. Math.},
			volume={41},
			number={4},
			pages={532\ndash 558},
			url={http://projecteuclid.org/euclid.ijm/1256068979},
			review={\MR{1468865}},
		}
		
		\bib{Arregui-Blasco2002:MultplrsVecValBergmanSpaces}{article}{
			author={Blasco, O.},
			author={Arregui, J.-L.},
			title={Multipliers on vector valued {B}ergman spaces},
			date={2002},
			ISSN={0008-414X},
			journal={Canad. J. Math.},
			volume={54},
			number={6},
			pages={1165\ndash 1186},
			url={http://dx.doi.org/10.4153/CJM-2002-044-3},
			review={\MR{1940234}},
		}
		
		\bib{Arregui-Blasco2003:BergmanBlochSpacesVecVal}{article}{
			author={Blasco, O.},
			author={Arregui, J.-L.},
			title={Bergman and {B}loch spaces of vector-valued functions},
			date={2003},
			ISSN={0025-584X},
			journal={Math. Nachr.},
			volume={261/262},
			pages={3\ndash 22},
			url={http://dx.doi.org/10.1002/mana.200310109},
			review={\MR{2020384}},
		}
		
		\bib{Blasco-Pott2008:EmbOpValDyadicBMO}{article}{
			author={Blasco, O.},
			author={Pott, S.},
			title={Embeddings between operator-valued dyadic {BMO} spaces},
			date={2008},
			ISSN={0019-2082},
			journal={Illinois J. Math.},
			volume={52},
			number={3},
			pages={799\ndash 814},
			url={http://projecteuclid.org/euclid.ijm/1254403715},
			review={\MR{2546008}},
		}
		
		\bib{Bonsall1984:BddnessHankMat}{article}{
			author={Bonsall, F.~F.},
			title={Boundedness of {H}ankel matrices},
			date={1984},
			ISSN={0024-6107},
			journal={J. London Math. Soc. (2)},
			volume={29},
			number={2},
			pages={289\ndash 300},
			url={http://dx.doi.org/10.1112/jlms/s2-29.2.289},
			review={\MR{744100}},
		}
		
		\bib{Bourgain1986:SimProblPolBddOpsHSpace}{article}{
			author={Bourgain, J.},
			title={On the similarity problem for polynomially bounded operators on
				{H}ilbert space},
			date={1986},
			ISSN={0021-2172},
			journal={Israel J. Math.},
			volume={54},
			number={2},
			pages={227\ndash 241},
			url={http://dx.doi.org/10.1007/BF02764943},
			review={\MR{852479}},
		}
		
		\bib{Bourgain1986:VecValSingIntsHardy-BMODualityChapter}{incollection}{
			author={Bourgain, J.},
			title={Vector-valued singular integrals and the {$H^1$}-{BMO} duality},
			date={1986},
			booktitle={Probability theory and harmonic analysis ({C}leveland, {O}hio,
				1983)},
			series={Monogr. Textbooks Pure Appl. Math.},
			volume={98},
			publisher={Dekker, New York},
			pages={1\ndash 19},
			review={\MR{830227}},
		}
		
		\bib{Buckley-Koskela-Vukotic1999:FracIntDiffBergmanSpaces}{article}{
			author={Buckley, S.~.M},
			author={Koskela, P.},
			author={Vukoti{\'c}, D.},
			title={Fractional integration, differentiation, and weighted {B}ergman
				spaces},
			date={1999},
			ISSN={0305-0041},
			journal={Math. Proc. Cambridge Philos. Soc.},
			volume={126},
			number={2},
			pages={369\ndash 385},
			url={http://dx.doi.org/10.1017/S030500419800334X},
			review={\MR{1670257}},
		}
		
		\bib{Bukhvalov-Danilevich1982:BdryPropsAnalHarmFcnsValBSpace}{article}{
			author={Bukhvalov, A.~V.},
			author={Danilevich, A.~A.},
			title={Boundary properties of analytic and harmonic functions with
				values in a {B}anach space},
			language={Russian},
			date={1982},
			ISSN={0025-567X},
			journal={Mat. Zametki},
			volume={31},
			number={2},
			pages={203\ndash 214, 317},
			review={\MR{649004}},
		}
		
		\bib{Carleson1958:InterpolProblBddAnalFcns}{article}{
			author={Carleson, L.},
			title={An interpolation problem for bounded analytic functions},
			date={1958},
			ISSN={0002-9327},
			journal={Amer. J. Math.},
			volume={80},
			pages={921\ndash 930},
			review={\MR{0117349}},
		}
		
		\bib{Carleson1962:InterpolBddAnalFcnsCoronaProbl}{article}{
			author={Carleson, L.},
			title={Interpolations by bounded analytic functions and the corona
				problem},
			date={1962},
			ISSN={0003-486X},
			journal={Ann. of Math. (2)},
			volume={76},
			pages={547\ndash 559},
			review={\MR{0141789}},
		}
		
		\bib{Cohn-Verbitsky2000:FactTentSpacesHankOps}{article}{
			author={Cohn, W.~S.},
			author={Verbitsky, I.~E.},
			title={Factorization of tent spaces and {H}ankel operators},
			date={2000},
			ISSN={0022-1236},
			journal={J. Funct. Anal.},
			volume={175},
			number={2},
			pages={308\ndash 329},
			url={http://dx.doi.org/10.1006/jfan.2000.3589},
			review={\MR{1780479}},
		}
		
		\bib{Davidson-Paulsen1997:PolBddOps}{article}{
			author={Davidson, K.~R.},
			author={Paulsen, V.~I.},
			title={Polynomially bounded operators},
			date={1997},
			ISSN={0075-4102},
			journal={J. Reine Angew. Math.},
			volume={487},
			pages={153\ndash 170},
			review={\MR{1454263}},
		}
		
		\bib{Diestel-Uhl1977:VecMeasures}{book}{
			author={Diestel, J.},
			author={Uhl, J.~J.~Jr.},
			title={Vector measures},
			publisher={American Mathematical Society, Providence, R.I.},
			date={1977},
			note={With a foreword by B. J. Pettis, Mathematical Surveys, No. 15},
			review={\MR{0453964}},
		}
		
		\bib{Fefferman1971:CharBMO}{article}{
			author={Fefferman, C.},
			title={Characterizations of bounded mean oscillation},
			date={1971},
			ISSN={0002-9904},
			journal={Bull. Amer. Math. Soc.},
			volume={77},
			pages={587\ndash 588},
			review={\MR{0280994}},
		}
		
		\bib{Fefferman-Stein1972:HpSpaces}{article}{
			author={Fefferman, C.},
			author={Stein, E.~M.},
			title={{$H^{p}$} spaces of several variables},
			date={1972},
			ISSN={0001-5962},
			journal={Acta Math.},
			volume={129},
			number={3-4},
			pages={137\ndash 193},
			review={\MR{0447953}},
		}
		
		\bib{Flett1972:DualIneqHardyLittlewood}{article}{
			author={Flett, T.~M.},
			title={The dual of an inequality of {H}ardy and {L}ittlewood and some
				related inequalities},
			date={1972},
			ISSN={0022-247x},
			journal={J. Math. Anal. Appl.},
			volume={38},
			pages={746\ndash 765},
			review={\MR{0304667}},
		}
		
		\bib{Foguel1964:CounterExSz.-NagyProbl}{article}{
			author={Foguel, S.~R.},
			title={A counterexample to a problem of {S}z.-{N}agy},
			date={1964},
			ISSN={0002-9939},
			journal={Proc. Amer. Math. Soc.},
			volume={15},
			pages={788\ndash 790},
			review={\MR{0165362}},
		}
		
		\bib{Garnett2007:BddAnalFcnsBook}{book}{
			author={Garnett, J.~B.},
			title={Bounded analytic functions},
			edition={first revised},
			series={Graduate Texts in Mathematics},
			publisher={Springer, New York},
			date={2007},
			volume={236},
			ISBN={978-0-387-33621-3; 0-387-33621-4},
			review={\MR{2261424}},
		}
		
		\bib{Gillespie-Pott-Treil-Volberg2004:LogGrowthHilbTransfVecHank}{article}{
			author={Gillespie, T.~A.},
			author={Pott, S.},
			author={Treil, S.},
			author={Volberg, A.},
			title={Logarithmic growth for weighted {H}ilbert transforms and vector
				{H}ankel operators},
			date={2004},
			ISSN={0379-4024},
			journal={J. Operator Theory},
			volume={52},
			number={1},
			pages={103\ndash 112},
			review={\MR{2091462}},
		}
		
		\bib{Haagerup-Pisier1989:FactAnalFcnsNon-CommL1Spaces}{article}{
			author={Haagerup, U.},
			author={Pisier, G.},
			title={Factorization of analytic functions with values in noncommutative
				{$L_1$}-spaces and applications},
			date={1989},
			ISSN={0008-414X},
			journal={Canad. J. Math.},
			volume={41},
			number={5},
			pages={882\ndash 906},
			url={http://dx.doi.org/10.4153/CJM-1989-041-6},
			review={\MR{1015588}},
		}
		
		\bib{Halmos1970:TenProbls}{article}{
			author={Halmos, P.~R.},
			title={Ten problems in {H}ilbert space},
			date={1970},
			ISSN={0002-9904},
			journal={Bull. Amer. Math. Soc.},
			volume={76},
			pages={887\ndash 933},
			review={\MR{0270173}},
		}
		
		\bib{Hedenmalm-Korenblum-Zhu2000:BergmanSpacesBook}{book}{
			author={Hedenmalm, H.},
			author={Korenblum, B.},
			author={Zhu, K.},
			title={Theory of {B}ergman spaces},
			series={Graduate Texts in Mathematics},
			publisher={Springer-Verlag, New York},
			date={2000},
			volume={199},
			ISBN={0-387-98791-6},
			url={http://dx.doi.org/10.1007/978-1-4612-0497-8},
			review={\MR{1758653}},
		}
		
		\bib{Jacob-Rydhe-Wynn2014:WeightWeissConjRKTGenHankOps}{article}{
			author={Jacob, B.},
			author={Rydhe, E.},
			author={Wynn, A.},
			title={The weighted {W}eiss conjecture and reproducing kernel theses for
				generalized {H}ankel operators},
			date={2014},
			ISSN={1424-3199},
			journal={J. Evol. Equ.},
			volume={14},
			number={1},
			pages={85\ndash 120},
			url={http://dx.doi.org/10.1007/s00028-013-0209-z},
			review={\MR{3169032}},
		}
		
		\bib{Janson-Peetre1988:Paracomms}{article}{
			author={Janson, S.},
			author={Peetre, J.},
			title={Paracommutators---boundedness and {S}chatten-von {N}eumann
				properties},
			date={1988},
			ISSN={0002-9947},
			journal={Trans. Amer. Math. Soc.},
			volume={305},
			number={2},
			pages={467\ndash 504},
			url={http://dx.doi.org/10.2307/2000875},
			review={\MR{924766}},
		}
		
		\bib{Kislyakov2000:OpsDisSimContr}{article}{
			author={Kislyakov, S.~V.},
			title={Operators that are (dis)similar to a contraction: {P}isier's
				counterexample in terms of singular integrals},
			language={Russian},
			date={1997},
			ISSN={0373-2703},
			journal={Zap. Nauchn. Sem. S.-Peterburg. Otdel. Mat. Inst. Steklov.
				(POMI)},
			volume={247},
			number={Issled. po Linein. Oper. i Teor. Funkts. 25},
			pages={79\ndash 95, 300},
			url={http://dx.doi.org/10.1007/BF02673734},
			review={\MR{1692632}},
		}
		
		\bib{Mei2006:MatValParaprods}{article}{
			author={Mei, T.},
			title={Notes on matrix valued paraproducts},
			date={2006},
			ISSN={0022-2518},
			journal={Indiana Univ. Math. J.},
			volume={55},
			number={2},
			pages={747\ndash 760},
			url={http://dx.doi.org/10.1512/iumj.2006.55.2926},
			review={\MR{2225451}},
		}
		
		\bib{Nazarov-Pisier-Treil-Volberg2002:EstsVecCarlesonEmbThmVecParaprods}{article}{
			author={Nazarov, F.},
			author={Pisier, G.},
			author={Treil, S.},
			author={Volberg, A.},
			title={Sharp estimates in vector {C}arleson imbedding theorem and for
				vector paraproducts},
			date={2002},
			ISSN={0075-4102},
			journal={J. Reine Angew. Math.},
			volume={542},
			pages={147\ndash 171},
			url={http://dx.doi.org/10.1515/crll.2002.004},
			review={\MR{1880830}},
		}
		
		\bib{Nazarov-Treil-Volberg1997:CounterExInfDimCarlesonEmbThm}{article}{
			author={Nazarov, F.},
			author={Treil, S.},
			author={Volberg, A.},
			title={Counterexample to the infinite-dimensional {C}arleson embedding
				theorem},
			date={1997},
			ISSN={0764-4442},
			journal={C. R. Acad. Sci. Paris S\'er. I Math.},
			volume={325},
			number={4},
			pages={383\ndash 388},
			url={http://dx.doi.org/10.1016/S0764-4442(97)85621-2},
			review={\MR{1467091}},
		}
		
		\bib{Nehari1957:BddBilinFrms}{article}{
			author={Nehari, Z.},
			title={On bounded bilinear forms},
			date={1957},
			ISSN={0003-486X},
			journal={Ann. of Math. (2)},
			volume={65},
			pages={153\ndash 162},
			review={\MR{0082945}},
		}
		
		\bib{Nikolski2002:EasyReading}{book}{
			author={Nikolski, N.~K.},
			title={Operators, functions, and systems: an easy reading. {V}ol. {I}},
			series={Mathematical Surveys and Monographs},
			publisher={American Mathematical Society, Providence, RI},
			date={2002},
			volume={92},
			ISBN={0-8218-1083-9},
			note={Hardy, Hankel, and Toeplitz, Translated from the French by
				Andreas Hartmann},
			review={\MR{1864396}},
		}
		
		\bib{Page1970:BddCompctVecHankOps}{article}{
			author={Page, L.~B.},
			title={Bounded and compact vectorial {H}ankel operators},
			date={1970},
			ISSN={0002-9947},
			journal={Trans. Amer. Math. Soc.},
			volume={150},
			pages={529\ndash 539},
			review={\MR{0273449}},
		}
		
		\bib{Paulsen1984:ComplPolBddSimContr}{article}{
			author={Paulsen, V.~I.},
			title={Every completely polynomially bounded operator is similar to a
				contraction},
			date={1984},
			ISSN={0022-1236},
			journal={J. Funct. Anal.},
			volume={55},
			number={1},
			pages={1\ndash 17},
			url={http://dx.doi.org/10.1016/0022-1236(84)90014-4},
			review={\MR{733029}},
		}
		
		\bib{Paulsen2002:ComplBddMapsOpAlgs}{book}{
			author={Paulsen, V.~I.},
			title={Completely bounded maps and operator algebras},
			series={Cambridge Studies in Advanced Mathematics},
			publisher={Cambridge University Press, Cambridge},
			date={2002},
			volume={78},
			ISBN={0-521-81669-6},
			review={\MR{1976867}},
		}
		
		\bib{Peller1982:EstsFcnsPwrBddOpsHSpace}{article}{
			author={Peller, V.~V.},
			title={Estimates of functions of power bounded operators on {H}ilbert
				spaces},
			date={1982},
			ISSN={0379-4024},
			journal={J. Operator Theory},
			volume={7},
			number={2},
			pages={341\ndash 372},
			review={\MR{658618}},
		}
		
		\bib{Peller1982:VecHankOps}{article}{
			author={Peller, V.~V.},
			title={Vectorial {H}ankel operators, commutators and related operators
				of the {S}chatten-von {N}eumann class {$\gamma _{p}$}},
			date={1982},
			ISSN={0378-620X},
			journal={Integral Equations Operator Theory},
			volume={5},
			number={2},
			pages={244\ndash 272},
			url={http://dx.doi.org/10.1007/BF01694041},
			review={\MR{647702}},
		}
		
		\bib{Peller2003:HankOpsBook}{book}{
			author={Peller, V.~V.},
			title={Hankel operators and their applications},
			series={Springer Monographs in Mathematics},
			publisher={Springer-Verlag, New York},
			date={2003},
			ISBN={0-387-95548-8},
			url={http://dx.doi.org/10.1007/978-0-387-21681-2},
			review={\MR{1949210}},
		}
		
		\bib{Pisier1997:PolBddNotSim}{article}{
			author={Pisier, G.},
			title={A polynomially bounded operator on {H}ilbert space which is not
				similar to a contraction},
			date={1997},
			ISSN={0894-0347},
			journal={J. Amer. Math. Soc.},
			volume={10},
			number={2},
			pages={351\ndash 369},
			url={http://dx.doi.org/10.1090/S0894-0347-97-00227-0},
			review={\MR{1415321}},
		}
		
		\bib{Rosenblum-Rovnyak1985:HardyClassesOpTheory}{book}{
			author={Rosenblum, M.},
			author={Rovnyak, J.},
			title={Hardy classes and operator theory},
			series={Oxford Mathematical Monographs},
			publisher={The Clarendon Press, Oxford University Press, New York},
			date={1985},
			ISBN={0-19-503591-7},
			note={Oxford Science Publications},
			review={\MR{822228}},
		}
		
		\bib{Rydhe2016:CharTriebel-LizorkinSpaces}{article}{
			author={Rydhe, E.},
			title={On the characterization of triebel--lizorkin type space of
				analytic functions},
			date={2016},
			journal={arXiv:1609.09229},
			status={preprint},
		}
		
		\bib{Rydhe2016:CounterExsCarlesonEmbThm}{article}{
			author={Rydhe, E.},
			title={Two more counterexamples to the infinite-dimensional carleson
				embedding theorem},
			date={2016},
			journal={arXiv:1608.06728},
			status={preprint},
		}
		
		\bib{Sarason1967:GenInterpol}{article}{
			author={Sarason, D.},
			title={Generalized interpolation in {$H^{\infty }$}},
			date={1967},
			ISSN={0002-9947},
			journal={Trans. Amer. Math. Soc.},
			volume={127},
			pages={179\ndash 203},
			review={\MR{0208383}},
		}
		
		\bib{Sz.-Nagy1959:ComplContOpsUniformlyBddIterates}{article}{
			author={Sz.-Nagy, B.},
			title={Completely continuous operators with uniformly bounded iterates},
			language={Hungarian},
			date={1959},
			journal={Magyar Tud. Akad. Mat. Kutat\'o Int. K\"ozl.},
			volume={4},
			pages={89\ndash 93},
			review={\MR{0108722}},
		}
		
		\bib{Wojtaszczyk1991:BSpacesForAnalysts}{book}{
			author={Wojtaszczyk, P.},
			title={Banach spaces for analysts},
			series={Cambridge Studies in Advanced Mathematics},
			publisher={Cambridge University Press, Cambridge},
			date={1991},
			volume={25},
			ISBN={0-521-35618-0},
			url={http://dx.doi.org/10.1017/CBO9780511608735},
			review={\MR{1144277}},
		}
		
	\end{biblist}
\end{bibdiv}

\end{document}